\DeclareSymbolFont{sfoperators}{OT1}{ptm}{m}{n}
\DeclareSymbolFontAlphabet{\mathsf}{sfoperators}
\def\operator@font{\mathgroup\symsfoperators}
\numberwithin{equation}{section}
\newtheorem{thm}{Theorem}[section]
\newtheorem{defn}[thm]{Definition}
\newtheorem{lem}[thm]{Lemma}
\newtheorem{prop}[thm]{Proposition}
\newtheorem{proposition}[thm]{Proposition}
\newtheorem{cor}[thm]{Corollary}
\theoremstyle{remark}
\newtheorem{rmk}[thm]{Remark}
\definecolor{darkgreen}{rgb}{0.1,0.7,0.1}
\definecolor{darkred}{rgb}{0.7,0.1,0.1}
\definecolor{darkblue}{rgb}{0,0,0.7}
\newcommand{\NN}{\mathbb{N}}
\newcommand{\RR}{\mathbb{R}}      
\newcommand{\ZZ}{\mathbb{Z}}      
\newcommand{\bB}{\mathcal{B}}
\newcommand{\cC}{\mathcal{C}}
\newcommand{\iI}{\mathcal{I}}
\newcommand{\jJ}{\mathcal{J}}
\newcommand{\kK}{\mathcal{K}}
\newcommand{\nN}{\mathcal{N}}
\newcommand{\pP}{\mathcal{P}}
\newcommand{\sS}{\mathcal{S}}
\newcommand{\tT}{\mathcal{T}}
\newcommand{\uU}{\mathcal{U}}
\newcommand{\fB}{\mathfrak{B}}
\newcommand{\fR}{\mathfrak{R}}
\newcommand{\cov}{{\operator@font cov}}
\newcommand{\var}{{\operator@font var}}
\newcommand{\corr}{{\operator@font corr}}
\newcommand{\diam}{{\operator@font diam}}
\newcommand{\Av}{{\operator@font Av}}
\newcommand{\trig}{{\operator@font trig}}
\newcommand{\Enh}{{\operator@font Enh}}
\newcommand{\EEnh}{\overline {\operator@font Enh}}
\newcommand{\Com}{{\operator@font Com}}
\newcommand{\C}{\mathbf{C}}
\newcommand{\E}{\mathbf{E}}
\newcommand{\1}{{\color{black}{\mathbf{1}}}}
\newcommand{\eps}{\varepsilon}
\colorlet{symbols}{blue!90!black}
\colorlet{testcolor}{green!60!black}
\def\${|\!|\!|}
\def\d{\partial}
\definecolor{darkgreen}{rgb}{0.1,0.7,0.1}
\definecolor{darkred}{rgb}{0.7,0.1,0.1}
\definecolor{darkblue}{rgb}{0,0,0.7}
\DeclareRobustCommand{\TitleEquation}[2]{\texorpdfstring{\StrLeft{\f@series}{1}[\@firstchar]$\if%
b\@firstchar\boldsymbol{#1}\else#1\fi$}{#2}}
\def\scal#1{\langle#1\rangle}
\def\d{\partial}
\begin{document}

\title{A functional Breuer-Major theorem with Poisson noise}
\author{Fanhao Kong and Haiyi Wang}
\institute{Peking University, China}

\maketitle

\begin{abstract}
    We extend the functional Breuer-Major theorem for Gaussians to the Poisson case, where the stationary sequence arises from a Poisson point process. We use the $L^p$ spectral gap inequality of Poisson point process as a tool to prove tightness.
\end{abstract}

\setcounter{tocdepth}{2}
\tableofcontents

\section{Introduction}

\subsection{The Breuer-Major theorem for Gaussians}

Let $X$ be a Gaussian random variable, and $\phi: \RR \rightarrow \RR$ such that $\phi(X)$ has finite second moment. Then the random variable $\phi(X)$ can be decomposed into an $L^2$ orthogonal sum
\begin{equation*}
    \phi(X) = \sum_{k \geq 0} \pP_k \big( \phi(X) \big)\;,
\end{equation*}
where $\pP_k$ denotes the projection onto the $k$-th homogeneous Wiener chaos. For each integer $d \geq 0$, define
\begin{equation*}
    \tT^{\geq d} \big( \phi(X) \big) := \sum_{k \geq d} \pP_k \big( \phi(X) \big)\;.
\end{equation*}
Now, let $X_1, X_2, \dots$ be a stationary sequence of $\nN(0,1)$ random variables, and
\begin{equation} \label{e:process}
    Y_n(t) \coloneqq \frac{1}{\sqrt{n}} \sum_{u=0}^{\lfloor nt \rfloor - 1} \phi(X_u)
\end{equation}
for some $\phi: \RR \rightarrow \RR$ with $\phi(X_u)$ having finite second moment. The classical Breuer-Major theorem (\cite{breuer_clt}) states that if the correlations of the stationary Gaussian sequence satisfy
\begin{equation} \label{eq:hypothesis1}
    \sum_{u \in \ZZ} \left| \E (X_0 X_u) \right|^d  < +\infty
\end{equation}
for some integer $d \geq 1$, then as $n \rightarrow +\infty$, the finite-dimensional distributions of the process $\tT^{\geq d} \big( Y_n(t) \big)$ converge to those of a Brownian motion. 

To prove an invariance principle, one needs a uniform $L^p(\Omega)$ bound for some $p>2$ to obtain tightness. By assuming very strong decay conditions on the coefficients of the chaos expansion of $\phi$, \cite{chambers1989central} and \cite{hariz2002limit} proved tightness of the process in the Skorohod space $\mathbf{D}([0,1])$. By using hypercontractivity to bound the $p$-th moment of each chaos component of $\tT^{\geq d} Y_n$, one can see that this is the condition that guarantees summability of the series. On the other hand, one can show that this condition is essentially equivalent to requiring the Fourier transform of $\phi$ to decay faster than a Gaussian, and hence is too restrictive for the statement to be interesting. 

The reason for such a procedure far from being optimal is that by summing up the $p$-th moment of each term, one does not take account into the many cancellations between different chaos components. Hence, in $L^p$ setting for $p>2$, one needs to consider $\tT^{\geq d} \big( Y_n(t) \big)$ as a whole object instead of treating individual chaos components separately. \cite{nualart_breuer_clt} achieved this by employing Meyer inequality (see \cite[Theorem 1.5.1]{Nua06}) Malliavin calculus. \cite{campese2020continuous} achieved this in the same way in the continuous version of functional Breuer–Major theorem. More precisely, they proved tightness if the process $\tT^{\geq d} \big( Y_n(t) \big)$ in the Skorohod space $\mathbf{D}([0,1])$ by merely by assuming that $\phi(X) \in L^p(\Omega)$ for some $p>2$ .

A natural question is to generalize this central limit theorem to non-Gaussian case. In this paper, we establish a functional central limit theorem for the process \eqref{e:process} when $X_n$ arises from a Poisson point process. Our proof is based on the following two steps.

The first step is the tightness of $\{Y_n\}_{n\geq1}$. 
The main challenge to apply the Malliavin calculus approach in \cite{nualart_breuer_clt} and \cite{campese2020continuous} here is the lack of Meyer inequality (see \cite[Theorem 1.5.1]{Nua06}) in the Malliavin calculus of the Poisson case. To overcome this issue, we use the $L^p$ spectral gap inequality to control the moments of the stochastic process. This techniques has parallels in recent studies of stochastic partial differential equations (see \cite{Otto_spectral_gap}, \cite{BPHZ_spectral_gap}and~\cite{KWX24}).

The second step is the convergence of finite-dimensional distributions. 
The direct application of the classical method of moments from \cite{breuer_clt} fails in the Poisson case due to the requirement of bounds for every derivative of $\phi$. We overcome this problem by Fourier expanding $\phi(X_u)$ to decouple $\phi$ from the random variable $X_u$, which reduces the problem to a trigonometric case matching the method of moments. This approach is inspired from \cite{HX19}, where a similar Fourier technique is used.

\subsection{Main result}
    Let $(\Omega, \mathcal{F},\mathbf P)$ be a probability space and let $\eta$ denote a Poisson point process on $\RR$ with Lebesgue intensity measure. For $g\in L^{2}(\mathbb{R}^n)$, we define $I_n(g)$ as the multiple Wiener-It\^o integral of order $n$ with respect to $\eta$.
    
    For $m \in \mathbb N$ and $F \in L^2(\Omega)$, the truncation operator $\tT^{\geq m}$ is defined by
    \begin{equation} \label{e:op_geq}
        \mathcal{T}^{\ge m} F = \sum_{n=m}^\infty I_n(f_n),
    \end{equation}
    where $F=\sum_{n=0}^\infty I_n(f_n)$ represents the chaos expansion of $F$. 

    Given $\psi\in L^2(\RR)$, we construct the stationary sequence
    \begin{equation} \label{e:poisson}
        X_j \coloneqq I_1(\psi_j),
    \end{equation}
    where $\psi_j(x) = \psi(x-j)$. Define
    \begin{equation*}
    	Y_n(t) \coloneqq \frac{1}{\sqrt{n}} \sum_{u=0}^{[nt] - 1} \phi(X_u), \quad t \in [0,1].
	\end{equation*}
Before stating our main result, we first introduce a class of functions with specific regularity properties.
\begin{defn} \label{def:nonlinearity_space}
    For $\gamma>0$ and $M\in\NN$, the class $\cC_{M}^{\gamma}$ consists of functions $\phi: \RR \rightarrow \RR$ such that there exists $C>0$ such that
    \begin{equation*}
        \sup_{0 \leq \ell \leq [\gamma]} |\phi^{(\ell)}(x)| \leq C (1 + |x|)^{M}\;, \qquad \sup_{|h|<1} \frac{|\phi^{([\gamma])}(x+h) - \phi^{([\gamma])}(x)|}{|h|^{\gamma-[\gamma]}} \leq C(1 + |x|)^{M}
    \end{equation*}
    for all $x \in \RR$. 
\end{defn}

Now we are ready to state the functional central limit theorem for $\mathcal{T}^{\ge d} Y_n$. We write $d_\alpha=[\frac{1}{2\alpha-1}]+1$ as the minimal integer bigger than $\frac{1}{2\alpha-1}$.
\begin{thm} \label{thm}
    Fix $d\in\NN^+$, $\alpha > \frac{1}{2} + \frac{1}{2d}$ and $\gamma_0>d_\alpha+1$. Suppose that there exist $C_\psi>0$ and $M_0\in \NN$ such that
    \begin{equation}\label{as:psi}  
        |\psi(x)| \le C_\psi (1 + |x|)^{-\alpha}
    \end{equation}
        for all $x\in\RR$ and
    \begin{equation}\label{as:phi}  
        \phi(x) \in \cC^{\gamma_0}_{M_0}.
    \end{equation}
    Then, the process $\mathcal{T}^{\ge d} Y_n$ converges in law to $\mu B$ in the Skorohod space $\mathbf{D}([0,1])$, where $(B_t)_{t \in [0,1]}$ is a standard Brownian motion, and
	\begin{equation} \label{e:sigma}
    	\mu^2 = \sum_{u\in\ZZ}\mathbf{E} [\mathcal{T}^{\ge d}\phi\left(X_0\right)\mathcal{T}^{\ge d}\phi\left(X_u\right)].
	\end{equation}
\end{thm}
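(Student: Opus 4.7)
The plan is to separate the argument into the two standard ingredients of a functional limit theorem: convergence of the finite-dimensional distributions of $\mathcal{T}^{\ge d} Y_n$ to those of $\mu B$, and tightness of its laws in the Skorohod space $\mathbf{D}([0,1])$. Once both are in place, the Gaussian limit $\mu B$ is uniquely determined by its covariance, whose explicit form \eqref{e:sigma} will drop out of the finite-dimensional computation.

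For the finite-dimensional distributions, I would use the Fourier-decoupling strategy from \cite{HX19} alluded to in the introduction. The growth and regularity condition $\phi \in \cC^{\gamma_0}_{M_0}$ allows a (possibly localized) Fourier representation $\phi(x) = \int_{\RR} \widehat{\phi}(\xi)\, e^{i\xi x}\, d\xi$, after which the problem reduces to controlling joint moments of
\[
Z_n^{\xi}(t) \coloneqq \frac{1}{\sqrt n} \sum_{u=0}^{[nt]-1} \mathcal{T}^{\ge d}\bigl(e^{i\xi X_u}\bigr)
\]
for fixed $\xi$, and then integrating in $\xi$ against $\widehat{\phi}$. For the purely trigonometric observable $e^{i\xi X_u}$, the cumulants can be computed combinatorially from the chaos structure of Poisson multiple integrals, and the classical method of moments in the spirit of \cite{breuer_clt} then yields joint Gaussian limits, with covariances expressible in terms of the discrete convolutions $\int\psi(x)\psi(x-u)\,dx$. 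The condition $\alpha>\tfrac12+\tfrac1{2d}$ from \eqref{as:psi} is exactly what ensures the summability
\[
\sum_{u\in\ZZ}\Bigl|\int\psi(x)\psi(x-u)\,dx\Bigr|^{d} < \infty,
\]
which is the Poisson analogue of the Breuer-Major hypothesis \eqref{eq:hypothesis1}. The smoothness threshold $\gamma_0 > d_\alpha + 1$ then provides enough decay of $\widehat{\phi}$ to dominate the polynomial growth $(1+|x|)^{M_0}$ and to justify interchanging the $\xi$-integral with the limit $n\to\infty$.

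For tightness, I would aim for a Kolmogorov-type moment bound
\[
\mathbf{E}\Bigl[\bigl|\mathcal{T}^{\ge d}\bigl(Y_n(t)-Y_n(s)\bigr)\bigr|^{p}\Bigr] \;\le\; C\,|t-s|^{p/2}
\]
for some $p>2$, uniformly in $n$. Since Meyer's inequality is not available in Poisson Malliavin calculus, I would instead iterate the $L^{p}$ spectral gap inequality for functionals of $\eta$ a total of $d$ times on $\mathcal{T}^{\ge d}(Y_n(t)-Y_n(s))$, reducing the estimate to an $L^{p}$ bound on the $d$-fold difference operator $D^{d}_{x_1,\dots,x_d}$ applied to $\sum_{u=[ns]}^{[nt]-1}\phi(X_u)$. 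Each such difference acts on $\phi(X_u)=\phi(I_1(\psi_u))$ as a finite difference of $\phi$ in the directions $\psi_u(x_i)$; expanding these into essentially Taylor-like pieces, the resulting $L^p$-norm factorizes through convolutions of $\psi$, so that together with the derivative bounds from $\cC^{\gamma_0}_{M_0}$ and the decay \eqref{as:psi} the same summability used in the fdd step delivers the required $|t-s|^{p/2}$ scaling.

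The main obstacle will be precisely this tightness step: one must match the combinatorics of iterated Poisson difference operators on $\phi(X_u)$ with the decay of $\psi$ sharply enough that the double sum over pairs of indices in $[[ns],[nt]]$ produces $|t-s|^{p/2}$ rather than a worse power, so that the $L^p$ spectral gap fills the role played by Meyer's inequality in the Gaussian arguments of \cite{nualart_breuer_clt,campese2020continuous}. A secondary difficulty, on the fdd side, is controlling the $\xi$-dependence of the trigonometric cumulants uniformly in $n$: one needs enough polynomial growth in $\xi$ (coming from differentiating $e^{i\xi X_u}$) to be absorbed by the decay of $\widehat{\phi}$, and this is where both $\gamma_0 > d_\alpha + 1$ and, once more, the spectral gap bounds enter.
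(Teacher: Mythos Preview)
Your outline matches the paper's two-step structure (tightness plus finite-dimensional convergence), and your identification of the $L^p$ spectral gap and the Fourier decoupling as the key tools is correct. Two points, however, deserve correction.

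First, the Kolmogorov bound you write, $\mathbf{E}\bigl|\mathcal{T}^{\ge d}(Y_n(t)-Y_n(s))\bigr|^{p}\le C\,|t-s|^{p/2}$, is false as stated: $Y_n$ is piecewise constant, so the increment does not vanish as $t\downarrow s$ across a jump. The appropriate criterion in $\mathbf{D}([0,1])$ (the paper's Lemma~3.2, taken from \cite{nualart_breuer_clt}) replaces $|t-s|$ by $([nt]-[ns])/n$; with this modification your plan goes through.

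Second, and more substantively, your tightness argument iterates the spectral gap $d$ times and then Taylor-expands the iterated finite difference of $\phi$. That would require $d$ classical derivatives of $\phi$, but the hypothesis only gives $\gamma_0>d_\alpha+1$ derivatives, and $d_\alpha=[\tfrac{1}{2\alpha-1}]+1$ can be much smaller than $d$ (for instance $d_\alpha=1$ whenever $\alpha>1$, while $d$ is arbitrary). The paper resolves this in two ways that you should incorporate: it iterates the spectral gap only $d_\alpha$ times, this being the minimal depth for which $\sum_u\langle\Psi_{\alpha,0},\Psi_{\alpha,u}\rangle^{d_\alpha}<\infty$ (Lemma~\ref{tightnessestamate}); and it uses the Fourier representation \emph{also in the tightness step}, not just for the finite-dimensional distributions. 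After writing $\mathcal{T}^{\ge d}\phi(X_u)=\langle\widehat\phi,\mathcal{T}^{\ge d}e^{i\theta X_u}\rangle$, each difference operator acts on $e^{i\theta X_u}$ via the elementary bound $|e^{i\theta\psi_u(x)}-1|\le |\theta|\,|\psi_u(x)|$, producing the factor $\Psi_{\alpha,u}^{\otimes d_\alpha}(\vec x)$ without any derivative of $\phi$. The price is polynomial growth $(1+|\theta|)^{d_\alpha}$ in the frequency, which is then absorbed by the local decay $\|\widehat\phi\|_{M_0+2,\fR_K}\lesssim(1+|K|)^{-\gamma_0}$; the condition $\gamma_0>d_\alpha+1$ is exactly what makes the resulting sum over $K$ converge. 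Your Taylor route would work only under the stronger assumption $\gamma_0>d$.
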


\begin{proof}
	The result follows from the tightness in the Skorohod space ${\bf D}([0,1])$ in Theorem~\ref{thm:tightness} and the convergence of finite-dimensional distributions in Theorem~\ref{thm:fdd_convergence}.
\end{proof}

\begin{rmk} \label{rmk:alpha=1}
   The result for $\alpha=1$ can be derived from the case $\alpha<1$. In fact, for $\gamma_0>3$ and $|\psi(x)| \le C_\psi (1 + |x|)^{-1}$, there exists $\alpha'\in(\frac12+\frac{1}{2d},1)$ such that $\gamma_0>d_{\alpha'}+1$ and $|\psi(x)| \le C_\psi (1 + |x|)^{-\alpha'}$. This completes the proof of our claim. To avoid the $\log$ factor in our calculation, from now on we assume $\alpha\in(\frac12+\frac{1}{2d},1)\cup(1,+\infty)$.
\end{rmk}

\begin{rmk}\label{cov}
The proof of Lemma~\ref{Bbound} implies that
\begin{equation*}
    |\cov(\mathcal{T}^{\ge d} \phi(X_0), \mathcal{T}^{\ge d}\phi(X_u))|\lesssim (1 + |u|)^{(1 - 2\alpha)d_\alpha}.
\end{equation*}
The order of decay $(1 - 2\alpha)d_\alpha<-1$, which aligns with the usual assumption of central limit theorem.
\end{rmk}

    To ensure the tightness of $(\mathcal{T}^{\ge d} Y_n)_{n\ge1}$, we need to control the $p$-th moment of the process for some $p>2$ (Lemma~\ref{lem:tightness}). The proof in \cite{nualart_breuer_clt} relies on the Malliavin calculus. In the Poisson case, we instead use the $L^p$ spectral gap inequality (Proposition~\ref{prop:sg_ineq}) to control the $p$-th moment of the process in terms of the $p$-th moment of the mixed $L^2$ and $L^p$ norms of higher order Malliavin derivatives. 
    
    For the convergence of finite-dimensional distributions, the standard method of moments in \cite{breuer_clt} requires the bound
    \begin{equation*}
        |\E D^n_{\vec{x}}\phi(X_u)|\lesssim|\psi_u|^{\otimes n}(\vec{x}), \qquad \forall n\geq d,
    \end{equation*}
    where $D^n_{\vec{x}}$ is the $n$-th order Malliavin derivative at ${\vec{x}} \in \RR^n$. However, this inequality holds in the Poisson case only if every derivative of $\phi$ can be bounded. To address this problem, we use the Fourier expansion $\phi(X_u)=\scal{\widehat{\phi},e^{{\rm i}\theta X_u}}_\theta$ to separate $\phi$ and $X_u$, and then apply the method of moments to the sequence $e^{{\rm i} \theta X_u}$.

A parallel result holds for linearly interpolated process $Z_n$ in $\C ([0,1])$, where $Z_n$ is defined by
\begin{equation*}
    Z_n(t)\coloneqq Y_n(t) + \frac{nt-[nt]}{\sqrt{n}} \phi(X_{[nt]}),\qquad t\in[0,1].
\end{equation*}
\begin{thm} \label{thm:continuous}
    Under the same assumptions as in Theorem~\ref{thm}, the process $\mathcal{T}^{\ge d} Z_n$ converges in law to $\mu B$ in $\C ([0,1])$, where $(B_t)_{t \in [0,1]}$ is a standard Brownian motion, and $\mu$ is given by \eqref{e:sigma}.
\end{thm}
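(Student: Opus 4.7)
The plan is to deduce Theorem~\ref{thm:continuous} directly from Theorem~\ref{thm} by exploiting the fact that $Z_n$ agrees with $Y_n$ at the grid points $k/n$ and that their pointwise difference is a single summand of order $n^{-1/2}$.

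\textbf{Finite-dimensional convergence.} For any fixed $t\in[0,1]$,
$$\mathcal{T}^{\ge d} Z_n(t) - \mathcal{T}^{\ge d} Y_n(t) = \frac{nt - [nt]}{\sqrt{n}}\, \mathcal{T}^{\ge d}\phi(X_{[nt]}),$$
which tends to $0$ in $L^2(\Omega)$ by stationarity of $\{X_u\}$ and the fact that $\mathcal{T}^{\ge d}\phi(X_0) \in L^2(\Omega)$. Combined with Theorem~\ref{thm:fdd_convergence} and Slutsky's lemma, this yields convergence of the finite-dimensional distributions of $\mathcal{T}^{\ge d} Z_n$ to those of $\mu B$.

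\textbf{Tightness in $\C ([0,1])$.} Since $\mathcal{T}^{\ge d}Z_n$ has continuous paths by construction, by the Kolmogorov--Chentsov criterion it suffices to establish a uniform bound
$$\E|\mathcal{T}^{\ge d}(Z_n(t) - Z_n(s))|^p \le C|t-s|^{p/2}$$
for some $p>2$, some $C$ independent of $n$, and all $s,t\in[0,1]$. When $s,t$ lie in a common grid cell $[k/n,(k+1)/n]$, we have the explicit identity $Z_n(t) - Z_n(s) = \sqrt{n}(t-s)\phi(X_k)$, and since $|t-s|\le 1/n$ and $\|\mathcal{T}^{\ge d}\phi(X_0)\|_{L^p}<\infty$ (a byproduct of the $L^p$ spectral gap estimates used in Theorem~\ref{thm:tightness}), the bound is immediate. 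When $s$ and $t$ lie in different grid cells, I split $[s,t]$ at the enclosed grid points, apply the single-cell bound to the two boundary pieces, and invoke the grid-point moment estimate of Theorem~\ref{thm:tightness} for the middle piece, where $Z_n$ and $Y_n$ coincide. Minkowski's inequality then gives the required bound.

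\textbf{Main obstacle.} The substantive work has already been carried out in Theorem~\ref{thm:tightness}; the only new ingredient is the interpolation step, which reduces to the single-cell analysis above. Consequently, no further spectral gap or chaos estimate is needed, and the remaining task is bookkeeping when combining the boundary and middle contributions in the moment bound.
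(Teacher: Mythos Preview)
Your proposal is correct and follows essentially the same route as the paper. The paper also deduces finite-dimensional convergence from the $L^2$-smallness of $\frac{nt-[nt]}{\sqrt n}\,\mathcal T^{\ge d}\phi(X_{[nt]})$ (Corollary~\ref{cor:z_fdd}), and proves tightness via the same $L^p$ increment bound $\|\mathcal T^{\ge d}(Z_n(t)-Z_n(s))\|_{L^p}\lesssim\sqrt{t-s}$ (Theorem~\ref{thm:tightness_z}), with the only cosmetic difference being the case split: the paper distinguishes $t-s>1/n$ versus $t-s\le 1/n$, whereas you distinguish same-cell versus different-cell, but the underlying estimates are identical.
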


\begin{proof}
    The result follows from the tightness in Theorem~\ref{thm:tightness_z} and the convergence of the finite-dimensional distributions in Corollary~\ref{cor:z_fdd}.
\end{proof}

\subsection*{Notations}

The relation $A\lesssim B$ represents there exists a constant $C$ such that $A\leq CB$, and $C$ is independent of some parameters.

\subsection*{Organisation of this article}
    In Section~\ref{sec:preliminaries}, we introduce some preliminary lemmas in Malliavin calculus for Poisson point process and local bounds of Fourier transform under some regularity assumptions. In Section~\ref{sec:tightness}, we establish the tightness of $(\mathcal{T}^{\ge d} Y_n)_{n\ge1}$ and $(\mathcal{T}^{\ge d} Z_n)_{n\ge1}$, which is based on the $L^p$ spectral gap inequality for the Poisson point process. In Section~\ref{sec:fdd_convergence}, we show the convergence of finite-dimensional distributions via the application of the method of moments to the trigonometric functions of $X_u$.
    
\section{Preliminaries}\label{sec:preliminaries}

\subsection{Malliavin calculus for Poisson point process}
    
     We introduce some basic elements of Malliavin calculus of Poisson point process, most of them can be found in \cite{LecturesonPP}. Let $(\Omega, \mathcal{F},\mathbf P)$ be a probability space. Let $\mathbf{N}_\sigma$ denote the set of $\NN^+\cup\{\infty\}$-valued $\sigma$-finite measures on $\RR$ and let $\mathcal{N}_\sigma$ denote the smallest $\sigma$-algebra that makes the mapping $\mathbf{N}_\sigma \ni \xi \mapsto \xi(W)$ measurable for every measurable set $W\subset\RR$. Let $\eta$ denote the Poisson point process on $\RR$ with Lebesgue intensity measure. We define
\begin{equation*}
    L^0_\eta = \{ f(\eta) \;|\; f : \mathbf{N}_\sigma \rightarrow \mathbb{C}\text{ is a } \mathcal{N}_\sigma\text{-measurable function}\; \}.
\end{equation*}
    For every $p>0$, denote $L^p_\eta$ as the set of random variables in $L^0_\eta$ with finite $p$-th moment.

    For $F=f(\eta) \in L^0_\eta$ and $x \in \RR$, the difference operator $D_x$ is defined by
        \begin{equation*}
            D_x  F \coloneqq  f(\eta + \delta_x) -  f(\eta),
        \end{equation*}
    where $\delta_x$ is the Dirac mass at $x$. For $n \in \NN$ and $\vec{x} = (x_1, \dots, x_n) \in \RR^n$, the $n$-th order difference $D^n$ at ${\vec{x}} \in \RR^n$ is defined by
    \begin{equation*}
        D^n_{\vec{x}} F \coloneqq D_{x_n} D_{x_{n-1}} \dots D_{x_1} F\;.
    \end{equation*}

    For the measure $\chi = \sum_{j} \delta_{x_j}$, its $k$-th factorial measure $\chi^{\diamond k}$ is a sum of delta measures on $\RR^k$, defined by
    \begin{equation*}
        \chi^{\diamond k} \coloneqq \sum \delta_{(x_{j_1}, \dots, x_{j_k})}\;,
    \end{equation*}
    where the sum is taken over $j_1, \dots, j_k$ such that $x_{j_i} \neq x_{j_{i'}}$ for $i\neq i'$. In short, $\chi^{\diamond k}$ is the $k$-th direct product of $\chi$ excluding repeated points. 
    
    For $n \in \NN^+$ and $p \geq 1$, let $L_s^p(\RR^n)$ denote the space of complex-valued $L^p(\RR^n)$ functions that are symmetric under permutations of their $n$ variables. The $n$-th Wiener-It\^o multiple integral
    \begin{equation*}
        I_{n}: L_s^1(\RR^n) \cap L_s^2(\RR^n)\rightarrow L^2(\Omega)
    \end{equation*}
    is defined by
    \begin{equation*}
        I_n(g) = \sum_{k=0}^{n} (-1)^{n-k} \binom{n}{k} \int_{\RR^{n-k}} \int_{\RR^k} g \, d\eta^{\diamond k} \, dx.
    \end{equation*}
    According to~\cite[Proposition~12.9 ]{LecturesonPP}, $ I_n$ extends uniquely to a map from $L_s^2(\RR^n)$ to $L^2(\Omega)$ with $\E I_{n} (f) = 0$ for $n \geq 1$. 
    
    According to~\cite[Theorem~18.10]{LecturesonPP}, the chaos expansion of $F \in L^2_\eta$ is given by
    \begin{equation}\label{eq:WienerItoChaos}
        F=\sum_{n=0}^\infty I_n(f_n),
    \end{equation}
    where $f_n(\vec{x}) = \frac{1}{n!} \mathbf{E}D^n_{\vec{x}}F\in L_s^{2}(\mathbb{R}^n)$, and the series converges in $L^2(\Omega)$. This expansion satisfies the isometry
    \begin{equation} \label{e:chaos_expansion} 
	    \| F\|^2_{L^2(\Omega)} = \sum_{n=0}^\infty n! \left\|f_n\right\|^2_{L^2(\RR^n)}\;.
    \end{equation}
    Similar to the operator $\tT^{\geq m}$ introduced in \eqref{e:op_geq}, for $F$ with expansion~\eqref{eq:WienerItoChaos} and $m_1,m_2\in\NN$, we define another two truncation operators
    \begin{equation}\label{eq:truncate}
       \mathcal{T}^{[m_1,m_2]}F = \sum_{n=m_1}^{m_2}I_n(f_n),\quad \mathcal{T}^{\le m_2}F = \sum_{n=0}^{m_2}I_n(f_n).
    \end{equation}
    
    For $G\in L^2_\eta$ with the chaos expansion $G=\sum_{n=0}^\infty I_n(g_n)$,
    we have (see \cite[Theorem 18.6]{LecturesonPP})
    \begin{equation}\label{e:iso}
        \E F\overline{G}=\sum_{n=0}^\infty n! \langle f_n,g_n\rangle_{L^2(\RR^n)}\;,
    \end{equation}
    where $L^2(\RR^n)$ is a complex Hilbert space.
    According to \cite[Theorem~3.3]{chaosexpansion}, for $F\in L^2_\eta$ satisfying
    \begin{equation*}
        \sum_{n=1}^{\infty} n \cdot n!\left\|f_n\right\|^2_{L^2(\RR^n)} < +\infty\;,
    \end{equation*}
    the chaos expansion of $D_xF$ is
    \begin{equation}\label{DxFchaos}
        D_x F = \sum_{n=1}^\infty n I_{n-1} \left( f_n(x,\cdot) \right)\;.
    \end{equation}
    This property implies that the difference operator $D_x$ is same as the Malliavin derivative for the Poisson point process.

\subsection{Local bounds of \TitleEquation{\widehat{\phi}}{ widehat{ phi}}}
    This section provides some technical bounds of $\widehat{\phi}$ if $\phi$ satisfies \eqref{as:phi}. Most of the following lemmas can be found in \cite{HX19}.
    
    Assume $\ell\in\NN$ and $a_1, \ldots, a_{\ell} \in \mathbb{N}$. The tensor product $\otimes_{i=1}^{\ell} f_i$ of functions $f_i : \mathbb{R}^{a_i} \rightarrow \mathbb R$ ($i=1, \ldots, \ell$) is defined by
\begin{equation*}       
    \big(\otimes_{i=1}^{\ell} f_i\big)(x_1, \ldots, x_n) = \prod_{i=1}^{\ell} f_i(X_u), \qquad X_u \in \mathbb{R}^{a_i} \text{ for }i \in \{1, \ldots, \ell\}.
\end{equation*}
If $f_1 = \ldots = f_{\ell} = f$, we denote this by $f^{\otimes \ell}$.

For distributions $\Upsilon_i$ on $\RR^{a_i}$, the tensor product $\otimes_{i=1}^{\ell} \Upsilon_i$ is a distribution on $\mathbb{R}^{a_1 + \ldots + a_{\ell}}$ defined inductively by
\begin{equation*}
    \left\langle\otimes_{i=1}^{\ell} \Upsilon_i,g\right\rangle=\left\langle\Upsilon_\ell,\left\langle\otimes_{i=1}^{\ell-1} \Upsilon_i,g(\cdot,\dots,\cdot,x_\ell)\right\rangle\right\rangle\qquad\text{ for $g\in C^{\infty}(\mathbb{R}^{a_1 + \ldots + a_{\ell}})$}.
\end{equation*}
If $\Upsilon_1 = \ldots = \Upsilon_{\ell} = \Upsilon$, we denote this by $\Upsilon^{\otimes \ell}$.
    
    For $N\in\NN^+$ and $\vec{\theta} = (\theta_1, \dots, \theta_N) \in\RR^{N}$, we define the cube $\fR_{\vec{\theta}}=\prod_{j=1}^{N}[\theta_j-1,\theta_j+1]$. For $\vec{r}\in\NN^N$ and $\vec{x}\in \RR^N$, the differential operator $\d_{\vec{x}}^{\vec{r}}$ represents $\d^{r_1}_{x_1}\cdots\d^{r_N}_{x_N}$. For every open bounded set $U \subset \RR^N$ and $M \in \NN$, we define a norm $\|\cdot\|_{\mathcal{B}_M(U)}$ on $\mathcal{C}_c^{\infty}(U)$ functions by

        \begin{equation*}
            \|g\|_{\mathcal{B}_M(U)}:=\sup _{|\vec{r}|_{\infty}\leq M} \sup _{\vec{x} \in U}|(\d_{\vec{x}}^{\vec{r}} g)(\vec{x})|.
        \end{equation*}
    We define a norm $\|\cdot\|_{M, U}$ on distributions on $\RR^N$ by
        \begin{equation*}
            \|\Upsilon\|_{M, U}:=\sup _{g:\|g\|_{\mathcal{B}_M(U)} \leq 1}|\langle\Upsilon, g\rangle|,
        \end{equation*}
    where the supremum is taken over all $g \in \mathcal{C}_c^{\infty}(U)$ with the $\mathcal{B}_M(U)$ norm bounded by $1$. We have the following lemma.


\begin{lem}\cite[Proposition~4.10]{HX19}\label{lem:local_decompose}
    Suppose $\Upsilon$ is a distribution on $\RR^N$ and $\Phi\in \cC^{\infty}(\RR^N)$. We have
        \begin{equation*}
            |\scal{\Upsilon,\Phi}| \lesssim \sum_{\vec{K}\in\ZZ^N}\|\Upsilon\|_{M+2, \fR_{\vec{K}}} \sup_{\vec{r}\in\NN^N:|\vec{r}|_{\infty} \leq M+2} \sup_{\vec{\theta}\in\fR_{\vec{K}}}|\d^{\vec{r}}_{\vec{\theta}} \Phi(\vec{\theta})|.
        \end{equation*}
\end{lem}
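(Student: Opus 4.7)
The plan is to prove this by a standard localization argument: I would decompose $\Phi$ via a smooth partition of unity subordinate to the cover $\{\fR_{\vec{K}}\}_{\vec{K}\in\ZZ^N}$, and then apply the definition of $\|\Upsilon\|_{M+2,\fR_{\vec{K}}}$ to each piece separately. First, fix a nonnegative $\chi\in\cC_c^{\infty}(\RR)$ supported in $(-1,1)$ with $\sum_{k\in\ZZ}\chi(x-k)=1$ for every $x\in\RR$, and set $\chi_{\vec{K}}(\vec{x}):=\prod_{j=1}^{N}\chi(x_j-K_j)$. Then $\{\chi_{\vec{K}}\}_{\vec{K}\in\ZZ^N}$ is a smooth partition of unity with $\chi_{\vec{K}}$ supported in the interior of $\fR_{\vec{K}}$; since each $\chi_{\vec{K}}$ is a translate of the fixed tensor product $\chi^{\otimes N}$, all of its derivatives are bounded uniformly in $\vec{K}$, and at any point of $\RR^N$ at most $2^N$ of the $\chi_{\vec{K}}$ are nonzero.

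Next, writing $\Phi=\sum_{\vec{K}\in\ZZ^N}\chi_{\vec{K}}\Phi$ as a locally finite sum and pairing with $\Upsilon$ gives
\begin{equation*}
\scal{\Upsilon,\Phi}=\sum_{\vec{K}\in\ZZ^N}\scal{\Upsilon,\chi_{\vec{K}}\Phi}.
\end{equation*}
Each $\chi_{\vec{K}}\Phi$ lies in $\cC_c^{\infty}(\fR_{\vec{K}}^{\circ})$, so the very definition of $\|\Upsilon\|_{M+2,\fR_{\vec{K}}}$ yields
\begin{equation*}
|\scal{\Upsilon,\chi_{\vec{K}}\Phi}|\leq\|\Upsilon\|_{M+2,\fR_{\vec{K}}}\cdot\|\chi_{\vec{K}}\Phi\|_{\mathcal{B}_{M+2}(\fR_{\vec{K}})}.
\end{equation*}
The Leibniz rule then gives, for any $\vec{r}\in\NN^N$ with $|\vec{r}|_{\infty}\leq M+2$ and any $\vec{\theta}\in\fR_{\vec{K}}$,
\begin{equation*}
|\d^{\vec{r}}_{\vec{\theta}}(\chi_{\vec{K}}\Phi)(\vec{\theta})|\leq\sum_{\vec{0}\leq\vec{s}\leq\vec{r}}\binom{\vec{r}}{\vec{s}}|\d^{\vec{s}}_{\vec{\theta}}\chi_{\vec{K}}(\vec{\theta})|\,|\d^{\vec{r}-\vec{s}}_{\vec{\theta}}\Phi(\vec{\theta})|\lesssim\sup_{|\vec{r}'|_{\infty}\leq M+2}\sup_{\vec{\theta}'\in\fR_{\vec{K}}}|\d^{\vec{r}'}_{\vec{\theta}'}\Phi(\vec{\theta}')|,
\end{equation*}
with a constant depending only on $\chi$ and $M$, and in particular independent of $\vec{K}$. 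Summing the resulting bound over $\vec{K}\in\ZZ^N$ yields precisely the claimed inequality.

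The only genuine subtlety is the interpretation of $\scal{\Upsilon,\Phi}$, since $\Phi$ is only assumed smooth (not compactly supported) and an arbitrary distribution need not pair with such a function. This is not really an obstacle: the lemma should be read as asserting that whenever the right-hand side is finite, the locally finite sum $\sum_{\vec{K}}\scal{\Upsilon,\chi_{\vec{K}}\Phi}$ converges absolutely and gives the natural definition of $\scal{\Upsilon,\Phi}$, while if the right-hand side is infinite the conclusion is vacuous. Everything else is a direct application of Leibniz and the definition of $\|\cdot\|_{M+2,\fR_{\vec{K}}}$, so no single step is substantially harder than the others.
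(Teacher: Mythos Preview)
Your argument is correct and is exactly the standard partition-of-unity proof one expects here. The paper does not actually supply a proof of this lemma; it merely cites \cite[Proposition~4.10]{HX19}, so there is nothing to compare against beyond noting that your approach is the natural one and almost certainly coincides with what is done in the cited reference.
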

\begin{lem}\label{lem:truncateU}
    Suppose $M\in\NN$. For $i=1,\dots,N$, let $\Upsilon_i$ be a distribution on $\RR$. Fix an even function $\zeta\in \sS(\RR)$ with $\zeta(0)=1$. Then for any $\beta\in(0,1)$, we have
        \begin{equation*}
            \|\otimes_{i=1}^N (\Upsilon_i\zeta(\eps\cdot))-\otimes_{i=1}^N \Upsilon_i\|_{M, \fR_{\vec{K}}} \lesssim \eps^\beta  \prod_{i=1}^N ((1+|K_i|)^\beta  \| \Upsilon_i\|_{M, \fR_{K_i}}),
        \end{equation*}
    where the proportionality constant is independent of $\Upsilon_i$, $\vec{K}\in\ZZ^N$ and $\eps\in(0,1)$.
\end{lem}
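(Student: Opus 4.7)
The plan is to reduce the multidimensional estimate to a one-dimensional multiplication bound via a standard telescoping, using the multiplicativity of the local norm under tensor products.

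First, I would record the submultiplicativity $\|\Upsilon\otimes\Upsilon'\|_{M,U\times U'}\leq\|\Upsilon\|_{M,U}\|\Upsilon'\|_{M,U'}$ for distributions $\Upsilon$ on $\RR$, $\Upsilon'$ on $\RR^{N-1}$, and bounded open sets $U,U'$. This follows directly from the inductive definition of the tensor product: for $g\in\mathcal{C}_c^\infty(U\times U')$ with $\|g\|_{\mathcal{B}_M(U\times U')}\leq 1$, the function $h(x_2,\dots,x_N)=\langle\Upsilon,g(\cdot,x_2,\dots,x_N)\rangle$ satisfies $\|h\|_{\mathcal{B}_M(U')}\leq\|\Upsilon\|_{M,U}$ by differentiating under the pairing (derivatives in the remaining variables commute with $\Upsilon$), after which one applies $\Upsilon'$. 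Iterating yields the analogous bound for $N$ factors.

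Next, I would prove two one-dimensional bounds uniformly in $\eps\in(0,1)$ and $K\in\ZZ$:
\begin{equation*}
    \|\Upsilon\,\zeta(\eps\cdot)\|_{M,\fR_K}\lesssim\|\Upsilon\|_{M,\fR_K},\qquad \|\Upsilon\,(\zeta(\eps\cdot)-1)\|_{M,\fR_K}\lesssim\eps^\beta(1+|K|)^\beta\|\Upsilon\|_{M,\fR_K}.
\end{equation*}
Both reduce, via the Leibniz rule applied to $\langle\Upsilon,(\text{multiplier})g\rangle$, to estimating the $\mathcal{B}_M(\fR_K)$ norm of the multiplier itself. For $\zeta(\eps\cdot)$ one just uses $|\d^r\zeta(\eps x)|=\eps^r|\zeta^{(r)}(\eps x)|\leq\|\zeta^{(r)}\|_\infty$. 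For $\zeta(\eps\cdot)-1$, the case $r=0$ exploits $\zeta(0)=1$ together with Schwartz decay: $|\zeta(y)-1|\lesssim\min(|y|,1)\lesssim|y|^\beta$ for $\beta\in(0,1]$, so on $\fR_K$ one gets $|\zeta(\eps x)-1|\lesssim(\eps(1+|K|))^\beta$; for $r\geq 1$ one estimates $|\d^r\zeta(\eps x)|\leq\eps^r\|\zeta^{(r)}\|_\infty\leq\eps^\beta$ since $\beta<1\leq r$ and $\eps<1$, which is in turn dominated by $\eps^\beta(1+|K|)^\beta$.

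Finally, writing $\Upsilon_i^\eps\coloneqq\Upsilon_i\zeta(\eps\cdot)$ and telescoping,
\begin{equation*}
    \bigotimes_{i=1}^N\Upsilon_i^\eps-\bigotimes_{i=1}^N\Upsilon_i=\sum_{j=1}^N\Bigl(\bigotimes_{i<j}\Upsilon_i^\eps\Bigr)\otimes\bigl(\Upsilon_j(\zeta(\eps\cdot)-1)\bigr)\otimes\Bigl(\bigotimes_{i>j}\Upsilon_i\Bigr),
\end{equation*}
I combine the submultiplicativity from the first step with the two one-dimensional bounds to control each summand by $\eps^\beta(1+|K_j|)^\beta\prod_{i=1}^N\|\Upsilon_i\|_{M,\fR_{K_i}}$ up to a constant depending only on $M$, $N$, and $\zeta$. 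Since $(1+|K_j|)^\beta\leq\prod_{i=1}^N(1+|K_i|)^\beta$ (each factor is at least $1$), summing the $N$ terms gives the claimed estimate. The argument presents no substantial obstacle; the only delicate point is the Hölder-type interpolation $|\zeta(\eps x)-1|\lesssim(\eps|x|)^\beta$, which is precisely what forces the exponent to lie in $(0,1)$ and is consistent with the authors' wish to avoid logarithmic corrections at $\beta=1$.
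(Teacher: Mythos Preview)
Your argument is correct. The paper does not give its own proof but defers to \cite[Proposition~4.6]{HX19}; your telescoping-plus-multiplier approach (submultiplicativity of the local norm under tensor products, combined with the one-dimensional bounds $\|\Upsilon\zeta(\eps\cdot)\|_{M,\fR_K}\lesssim\|\Upsilon\|_{M,\fR_K}$ and $\|\Upsilon(\zeta(\eps\cdot)-1)\|_{M,\fR_K}\lesssim\eps^\beta(1+|K|)^\beta\|\Upsilon\|_{M,\fR_K}$) is exactly the standard route and presumably what the cited reference does. One minor remark: your closing comment slightly mislocates the constraint on $\beta$ --- the interpolation $|\zeta(y)-1|\lesssim|y|^\beta$ works for all $\beta\in(0,1]$, and the restriction $\beta\leq 1$ is really needed only to pass from $\eps^r$ to $\eps^\beta$ in the derivative terms $r\geq 1$; but this does not affect the validity of the proof for the stated range $\beta\in(0,1)$.
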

\begin{proof}
    Same as \cite[Proposition~4.6]{HX19}.
\end{proof}
\begin{lem}\label{mollifyU}
    Suppose $M\in\NN$, $\Upsilon$ is a distribution on $\RR^N$. Fix an even function $\zeta\in \sS(\RR^N)$ with $\int_{\RR^N}\zeta(x) dx=1$. For every $\eps>0$, let $\zeta_\eps=\eps^{-N}\zeta(\cdot/\eps)$. Then for any $\lambda>2$, we have
        \begin{equation*}
            \|\Upsilon*\zeta_\eps\|_{M+3, \fR_{\vec{K}}}\lesssim  \sum_{\vec{L}\in\ZZ^N}  \frac{\|\Upsilon\|_{M+2, \fR_{\vec{L}}}}{1+|\vec{L}-\vec{K}|^\lambda}
        \end{equation*}
    and
        \begin{equation*}
            \|\Upsilon*\zeta_\eps-\Upsilon\|_{M+3, \fR_{\vec{K}}}\lesssim \eps \sum_{\vec{L}\in\ZZ^N}  \frac{\|\Upsilon\|_{M+2, \fR_{\vec{L}}}}{1+|\vec{L}-\vec{K}|^\lambda} .
        \end{equation*}
    where the proportionality constants is independent of $\Upsilon$, $\vec{K}\in\ZZ^N$ and $\eps\in(0,1)$.
\end{lem}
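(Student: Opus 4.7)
The plan is to proceed by duality. By the definition of $\|\cdot\|_{M+3, \fR_{\vec{K}}}$, it suffices to bound $|\langle \Upsilon*\zeta_\eps, g\rangle|$ uniformly over test functions $g\in C_c^\infty(\fR_{\vec{K}})$ with $\|g\|_{\mathcal{B}_{M+3}(\fR_{\vec{K}})}\le 1$, and similarly $|\langle\Upsilon*\zeta_\eps-\Upsilon, g\rangle|$ for the second estimate. Because $\zeta$ is even, $\langle \Upsilon*\zeta_\eps, g\rangle = \langle \Upsilon, g*\zeta_\eps\rangle$. The function $g*\zeta_\eps$ is smooth but no longer compactly supported, so I would introduce a smooth partition of unity $\{\chi_{\vec{L}}\}_{\vec{L}\in\ZZ^N}$ subordinate to $\{\fR_{\vec{L}}\}$ and decompose
\begin{equation*}
|\langle \Upsilon, g*\zeta_\eps\rangle| \le \sum_{\vec{L}\in\ZZ^N} \|\Upsilon\|_{M+2, \fR_{\vec{L}}} \cdot \|\chi_{\vec{L}}(g*\zeta_\eps)\|_{\mathcal{B}_{M+2}(\fR_{\vec{L}})}.
\end{equation*}
Thus the proof reduces to showing $\|\chi_{\vec{L}}(g*\zeta_\eps)\|_{\mathcal{B}_{M+2}(\fR_{\vec{L}})} \lesssim (1+|\vec{L}-\vec{K}|^\lambda)^{-1}$ (resp.\ with an extra $\eps$ factor), uniformly in $\eps\in(0,1)$.

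By Leibniz, the bump $\chi_{\vec{L}}$ and its derivatives are harmless, so I focus on $\|(\partial^{\vec{s}} g)*\zeta_\eps\|_{L^\infty(\fR_{\vec{L}})}$ for $|\vec{s}|_\infty \le M+2$. For $\vec{L}$ within a bounded distance of $\vec{K}$, Young's inequality gives $\|(\partial^{\vec{s}} g)*\zeta_\eps\|_\infty \le \|\partial^{\vec{s}} g\|_\infty \|\zeta_\eps\|_{L^1}\lesssim 1$, which matches the target bound in that regime. For $|\vec{L}-\vec{K}|$ large, the support of $g$ forces $|x-z|\gtrsim |\vec{L}-\vec{K}|$ for $x\in \fR_{\vec{L}}$, $z\in\fR_{\vec{K}}$. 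Using the Schwartz decay of $\zeta$ in the form $|\zeta_\eps(y)|\lesssim \eps^{\mu-N}(\eps+|y|)^{-\mu}$ for arbitrarily large $\mu$, and picking $\mu\ge\max(\lambda,N)$ (so $\eps^{\mu-N}\le 1$), one obtains $\|(\partial^{\vec s} g)*\zeta_\eps\|_{L^\infty(\fR_{\vec{L}})}\lesssim |\vec{L}-\vec{K}|^{-\mu}\le |\vec{L}-\vec{K}|^{-\lambda}$. This establishes the first inequality.

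For the second inequality, the key identity uses $\int\zeta=1$:
\begin{equation*}
(g*\zeta_\eps)(x)-g(x) = \int\bigl(g(x-y)-g(x)\bigr)\zeta_\eps(y)\,dy.
\end{equation*}
When $\vec{L}$ is close to $\vec{K}$, a first-order Taylor expansion of $g$, combined with $\int|y|\,|\zeta_\eps(y)|\,dy = O(\eps)$ and the fact that we have one extra derivative available (i.e.\ $\|g\|_{\mathcal{B}_{M+3}}\le 1$ controls the $(M+2)$-th derivatives of $\nabla g$), yields $\|\chi_{\vec{L}}(g*\zeta_\eps - g)\|_{\mathcal{B}_{M+2}}\lesssim \eps$. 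When $\vec{L}$ is far from $\vec{K}$, the function $g$ itself vanishes on $\fR_{\vec{L}}$, so $g*\zeta_\eps - g = g*\zeta_\eps$ there and I can reuse the argument above, now taking $\mu\ge N+1$ so that $\eps^{\mu-N}\le\eps$, producing the extra $\eps$ factor.

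The main obstacle, and the reason for the slight gap in differentiability ($M+2$ on the right vs.\ $M+3$ on the left), is precisely the first-order Taylor step in the second bound: one sacrifices one derivative of $g$ to gain a factor of $\eps$ in the regime where $\vec{L}$ is close to $\vec{K}$, while in the far regime the $\eps$ is obtained ``for free'' from the Schwartz decay of $\zeta$ by choosing the tail exponent $\mu$ one unit larger than $N$. Balancing these two regimes through the partition of unity is the technical heart; all remaining estimates are standard convolution bookkeeping.
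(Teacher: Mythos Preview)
Your approach is correct and essentially the same as the paper's. The only cosmetic difference is that where you introduce an explicit partition of unity $\{\chi_{\vec{L}}\}$, the paper invokes its Lemma~\ref{lem:local_decompose} (which packages exactly that localization step) to write $|\langle\Upsilon,g*\zeta_\eps-g\rangle|\lesssim \sum_{\vec{L}}\|\Upsilon\|_{M+2,\fR_{\vec{L}}}\sup_{|\vec{r}|_\infty\le M+2}\sup_{\fR_{\vec{L}}}|\partial^{\vec{r}}(g*\zeta_\eps-g)|$; from there the near/far split, the one-derivative Taylor gain of $\eps$ in the near regime, and the Schwartz-tail gain of $\eps$ in the far regime proceed exactly as you describe. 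One tiny bookkeeping point: in the far regime of the second inequality you should take $\mu\ge\max(\lambda,N+1)$, not just $\mu\ge N+1$, so that you simultaneously obtain $\eps^{\mu-N}\le\eps$ and $|\vec{L}-\vec{K}|^{-\mu}\le|\vec{L}-\vec{K}|^{-\lambda}$.
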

\begin{proof}
    We only give the proof of the second inequality. For every $g \in \mathcal{C}_c^{\infty}(\fR_{\vec{K}})$, by Lemma~\ref{lem:local_decompose}, we have
    \begin{equation}\label{eq:Umollify}
        |\langle\Upsilon*\zeta_\eps-\Upsilon,g\rangle|=|\langle\Upsilon,g*\zeta_\eps-g\rangle|\lesssim \sum_{\vec{L}\in\ZZ^N} \|\Upsilon\|_{M+2, \fR_{\vec{L}}} \sup_{|\vec{r}|_{\infty} \leq M+2} \sup_{\vec{x}\in\fR_{\vec{L}}}| ((\d^{\vec{r}}_{\vec{x}}g)*\zeta_\eps-\d^{\vec{r}}_{\vec{\theta}}g)(\vec{x})|.
    \end{equation}

    If $|\vec{L}-\vec{K}|_{\infty} \le 9$, by the inequality 
    \begin{equation*}
        \sup _{\vec{x} \in \fR_{\vec{L}}}|(f*\zeta_\eps-f)(\vec{x}) |\lesssim \eps \sup _{|\vec{r}|_\infty\le 1 }\sup _{\vec{x} \in \fR_{\vec{K}}}|(\d_{\vec{x}}^{\vec{r}} f)(\vec{x})|, \text{ for } f \in \mathcal{C}_c^{\infty}(\fR_{\vec{K}}), 
    \end{equation*}
    we have
    \begin{equation}\label{L-Ksmall}
        \begin{aligned}
            \sup _{|\vec{r}|_{\infty}\leq M+2} \sup _{\vec{x} \in \fR_{\vec{L}}}|((\d_{\vec{x}}^{\vec{r}} g)*\zeta_\eps-\d_{\vec{x}}^{\vec{r}}g)(\vec{x})|
            \lesssim \eps\sup _{|\vec{r}|_{\infty}\leq M+3} \sup _{\vec{x} \in \fR_{\vec{K}}}|(\d_{\vec{x}}^{\vec{r}}g)(\vec{x})|.
        \end{aligned}
    \end{equation}
    Since $\zeta\in\sS(\RR^N)$, we have $|\zeta(\vec{y})|\lesssim(1+|\vec{y}|)^{-\lambda}$. If $|\vec{L}-\vec{K}|_{\infty} \ge 10$,  for any $\lambda>1$, we have 
    \begin{equation*}
        \begin{aligned}
            \sup _{\vec{x} \in \fR_{\vec{L}}} \int_{x-\fR_{\vec{K}}} |\zeta_\eps(\vec{y})| d\vec{y}=&\sup _{\vec{x} \in \fR_{\vec{L}}} \int_{\frac{x-\fR_{\vec{K}}}{\eps}} |\zeta(\vec{y})| d\vec{y}\\
            \lesssim &\sup _{\vec{x} \in \fR_{\vec{L}}} \int_{\frac{x-\fR_{\vec{K}}}{\eps}} (1+|\vec{y}|)^{-\lambda} d\vec{y}\\
            \lesssim & \frac{\eps^{\lambda-1}}{|\vec{L}-\vec{K}|_{\infty}^\lambda}.
        \end{aligned}
    \end{equation*}
    Then for $|\vec{L}-\vec{K}|_{\infty} \ge 10$ and any $\lambda>1$, we have 
    \begin{equation*}
        \begin{aligned}
            \sup _{\vec{x} \in \fR_{\vec{L}}}|(f*\zeta_\eps)(\vec{x}) |&\lesssim \sup _{\vec{x} \in \fR_{\vec{K}}}|f(\vec{x})| \cdot \sup _{\vec{x} \in \fR_{\vec{L}}} \int_{x-\fR_{\vec{K}}} |\zeta_\eps(\vec{y})| d\vec{y}\\
            &\lesssim \sup _{\vec{x} \in \fR_{\vec{K}}}|f(\vec{x})| \frac{\eps^{\lambda-1}}{|\vec{L}-\vec{K}|_{\infty}^\lambda}, \text{ for } f \in \mathcal{C}_c^{\infty}(\fR_{\vec{K}})
        \end{aligned}
    \end{equation*}
    By the above inequality, for $|\vec{L}-\vec{K}|_{\infty} \ge 10$, we have
    \begin{equation}\label{L-Kbig}
        \begin{aligned}
            &\sup _{|\vec{r}|_{\infty}\leq M+2} \sup _{\vec{x} \in \fR_{\vec{L}}}|((\d_{\vec{x}}^{\vec{r}} g)*\zeta_\eps)(\vec{x})| 
            \lesssim \frac{\eps^{\lambda-1}}{|\vec{L}-\vec{K}|_{\infty}^\lambda}\sup _{|\vec{r}|_{\infty}\leq M+2} \sup _{\vec{x} \in \fR_{\vec{K}}}|(\d_{\vec{x}}^{\vec{r}}g)(\vec{x})|.
        \end{aligned}
    \end{equation}
    Substituting~\eqref{L-Ksmall} and~\eqref{L-Kbig} into~\eqref{eq:Umollify} and taking supremum over all $g \in \mathcal{C}_c^{\infty}(\fR_{\vec{K}})$ with the $\mathcal{B}_{M+3}(\fR_{\vec{K}})$ norm bounded by $1$, we conclude our proof.
\end{proof}
\begin{lem}\label{phihatdecay}
    Under~\eqref{as:phi}, we have
        \begin{equation*}
            \|\hat{\phi}^{\otimes N}\|_{M_0+2, \fR_{\vec{K}}}\lesssim \prod_{i=1}^N (1+|K_i|)^{- \gamma_0}, 
        \end{equation*}
     where the proportionality constant is independent of $\vec{K}\in\ZZ^N$.
    
\end{lem}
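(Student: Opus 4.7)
The plan is to prove the estimate by induction on $N$, reducing the multi-dimensional bound to the scalar decay $\|\hat\phi\|_{M_0+2,\fR_K}\lesssim(1+|K|)^{-\gamma_0}$, and then to establish the scalar bound via Fourier duality, integration by parts, and a Hölder-shift trick for the critical-order term.

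For the inductive reduction, I would fix $g\in\cC_c^\infty(\fR_{\vec K})$ with $\|g\|_{\mathcal{B}_{M_0+2}(\fR_{\vec K})}\le1$. By the inductive definition of the tensor product of distributions recalled in the excerpt, $\langle\hat\phi^{\otimes N},g\rangle=\langle\hat\phi,G_N\rangle$ with $G_N(\theta_N):=\langle\hat\phi^{\otimes(N-1)},g(\cdot,\theta_N)\rangle$. Since $g$ vanishes outside $\fR_{\vec K}$, so does $G_N$ outside $\fR_{K_N}$; derivatives in $\theta_N$ commute with the inner pairing, and for $r\le M_0+2$ one has $\|\d_{\theta_N}^r g(\cdot,\theta_N)\|_{\mathcal{B}_{M_0+2}(\fR_{\vec K'})}\le 1$. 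Hence the induction hypothesis gives $\|G_N\|_{\mathcal{B}_{M_0+2}(\fR_{K_N})}\lesssim\prod_{i=1}^{N-1}(1+|K_i|)^{-\gamma_0}$, and applying the scalar bound closes the induction.

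For the scalar bound, write $g(\theta)=h(\theta-K)$ with $h\in\cC_c^\infty([-1,1])$ and $\|h\|_{\mathcal{B}_{M_0+2}([-1,1])}\le1$. By Fourier duality,
\begin{equation*}
\langle\hat\phi,g\rangle=\int_\RR\phi(x)\,e^{-iKx}\,\hat h(x)\,dx,
\end{equation*}
where compact support and smoothness of $h$ yield $|\d_x^r\hat h(x)|\lesssim(1+|x|)^{-(M_0+2)}$ for $r\le M_0+2$. For $|K|\lesssim1$ the claim is immediate; for $|K|$ large, integrate by parts $m:=[\gamma_0]$ times against $e^{-iKx}$ to extract $(iK)^{-m}$ and produce, by Leibniz, the sum $\sum_{j=0}^m\binom{m}{j}\int\phi^{(j)}(x)\hat h^{(m-j)}(x)e^{-iKx}dx$. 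Terms with $j<m$ admit one further integration by parts (since $\phi^{(j+1)}$ still exists and has polynomial growth), giving decay $|K|^{-m-1}\le|K|^{-\gamma_0}$. For the critical $j=m$ term, apply the oscillatory shift identity
\begin{equation*}
2\int_\RR F(x)\,e^{-iKx}\,dx=\int_\RR\bigl[F(x)-F(x-\pi/K)\bigr]\,e^{-iKx}\,dx,
\end{equation*}
valid since $e^{-i\pi}=-1$, with $F=\phi^{(m)}\hat h$; split $F(x)-F(x-\pi/K)$ as $[\phi^{(m)}(x)-\phi^{(m)}(x-\pi/K)]\hat h(x)+\phi^{(m)}(x-\pi/K)[\hat h(x)-\hat h(x-\pi/K)]$. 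The Hölder bound from~\eqref{as:phi} controls the first summand by $C(1+|x|)^{M_0}|K|^{-(\gamma_0-m)}|\hat h(x)|$, and the mean value theorem together with the decay of $\hat h'$ controls the second by $C(1+|x|)^{M_0}|K|^{-1}(1+|x|)^{-(M_0+2)}$. Both are integrable and jointly produce the factor $|K|^{-(\gamma_0-m)}$, which combines with $|K|^{-m}$ to yield the desired $|K|^{-\gamma_0}$.

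The principal difficulty is precisely the $j=m$ term, since $\phi^{([\gamma_0])}$ cannot be differentiated further and its $(\gamma_0-[\gamma_0])$-Hölder regularity is weighted by the polynomial $(1+|x|)^{M_0}$ from~\eqref{as:phi}. The oscillatory shift identity converts fractional regularity into frequency decay, but the argument hinges on the test function having enough derivatives to absorb the polynomial growth: the exponent $M_0+2$ in $\mathcal{B}_{M_0+2}$ is tuned so that $(1+|x|)^{M_0}|\hat h(x)|$ and $(1+|x|)^{M_0}|\hat h'(x)|$ are integrable uniformly over admissible test functions, which is exactly what makes the weighted estimate close.
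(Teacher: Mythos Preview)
Your argument is correct and complete. The paper does not give its own proof here, instead citing \cite[Lemmas~4.5 and~4.8]{HX19}; your decomposition into a scalar decay estimate (integration by parts plus the oscillatory half-period shift to extract the fractional H\"older exponent from $\phi^{([\gamma_0])}$) followed by an inductive tensor-product reduction is precisely the standard route and matches the structure of the cited reference.
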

\begin{proof}
    Same as \cite[Lemma~4.5, Lemma~4.8]{HX19}.
\end{proof}
\begin{lem}
\label{lem:exchange}
    For every $p\ge1$, every $M\in\NN$ and every random smooth function $\Phi$ on $\RR^N$, We have
    \begin{equation*}
        \Big\| \sup_{\vec{r}\in\NN^N : |\vec{r}|_{\infty}\leq M+2} \sup_{\vec{\theta}\in\fR_{\vec{K}}} |\d^{\vec{r}}_{\vec{\theta}} \Phi(\vec{\theta})| \Big\|_{L^p(\Omega)} \lesssim \sup_{\vec{r}\in\NN^N : |\vec{r}|_{\infty}\leq M+3} \sup_{\vec{\theta}\in\fR_{\vec{K}}} \| \d^{\vec{r}}_{\vec{\theta}} \Phi(\vec{\theta}) \|_{L^p(\Omega)},
    \end{equation*}
    where the proportionality constant is independent of $\vec{K}\in\ZZ^N$.
\end{lem}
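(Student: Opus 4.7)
The plan is to combine a deterministic Sobolev-type inequality on the fixed-size cube $\fR_{\vec{K}}$ with Fubini's theorem. The key observation is that passing from $|\vec{r}|_{\infty}\le M+2$ on the left to $|\vec{r}|_{\infty}\le M+3$ on the right gains one extra order of derivative in each coordinate direction, which is exactly what an iterated one-dimensional embedding $W^{1,1}\hookrightarrow L^\infty$ consumes.

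First I would prove the following deterministic bound, valid for any smooth $f:\fR_{\vec{K}}\to\RR$ and any $p\ge1$:
\begin{equation*}
\sup_{\vec{\theta}\in\fR_{\vec{K}}}|f(\vec{\theta})| \;\lesssim\; \sum_{|\vec{r}'|_{\infty}\le 1}\|\d^{\vec{r}'}f\|_{L^p(\fR_{\vec{K}})},
\end{equation*}
with constant depending only on $N$ and $p$. This follows by iterating the one-dimensional identity $\sup_{x\in[a-1,a+1]}|g(x)|\le\tfrac12\|g\|_{L^1}+\|g'\|_{L^1}$ variable by variable, and then using Hölder to upgrade $L^1(\fR_{\vec{K}})$ to $L^p(\fR_{\vec{K}})$ (the cube has fixed volume $2^N$). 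Translation invariance of Lebesgue measure makes the constant independent of $\vec{K}$.

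Next, for each fixed $\vec{r}$ with $|\vec{r}|_{\infty}\le M+2$, I would apply this inequality pathwise to the random smooth function $f=\d^{\vec{r}}\Phi$, obtaining
\begin{equation*}
\sup_{\vec{\theta}\in\fR_{\vec{K}}}|\d^{\vec{r}}\Phi(\vec{\theta})|\;\lesssim\;\sum_{|\vec{r}'|_{\infty}\le 1}\|\d^{\vec{r}+\vec{r}'}\Phi\|_{L^p(\fR_{\vec{K}})}.
\end{equation*}
Taking the $L^p(\Omega)$-norm of both sides and using the triangle inequality together with Fubini's theorem,
\begin{equation*}
\Bigl\|\sup_{\vec{\theta}}|\d^{\vec{r}}\Phi(\vec{\theta})|\Bigr\|_{L^p(\Omega)}\;\lesssim\;\sum_{|\vec{r}'|_{\infty}\le 1}\Bigl(\int_{\fR_{\vec{K}}}\mathbf{E}|\d^{\vec{r}+\vec{r}'}\Phi(\vec{\theta})|^p\,\dif\vec{\theta}\Bigr)^{1/p}\;\lesssim\;\sup_{|\vec{r}'|_{\infty}\le 1}\sup_{\vec{\theta}\in\fR_{\vec{K}}}\|\d^{\vec{r}+\vec{r}'}\Phi(\vec{\theta})\|_{L^p(\Omega)}.
\end{equation*}
Since $|\vec{r}+\vec{r}'|_{\infty}\le M+3$, the right-hand side is dominated by $\sup_{|\vec{s}|_{\infty}\le M+3}\sup_{\vec{\theta}}\|\d^{\vec{s}}\Phi(\vec{\theta})\|_{L^p(\Omega)}$.

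Finally, since the sup over the finite set $\{\vec{r}:|\vec{r}|_{\infty}\le M+2\}$, which has cardinality at most $(M+3)^N$, can be controlled by the sum,
\begin{equation*}
\Bigl\|\sup_{|\vec{r}|_{\infty}\le M+2}\sup_{\vec{\theta}}|\d^{\vec{r}}\Phi(\vec{\theta})|\Bigr\|_{L^p(\Omega)}\;\le\;\sum_{|\vec{r}|_{\infty}\le M+2}\Bigl\|\sup_{\vec{\theta}}|\d^{\vec{r}}\Phi(\vec{\theta})|\Bigr\|_{L^p(\Omega)},
\end{equation*}
which combined with the previous step yields the claim. The argument is entirely routine; the only point requiring minor care is tracking that all implicit constants depend solely on $N$, $M$, $p$, a consequence of the translation invariance of $\fR_{\vec{K}}$.
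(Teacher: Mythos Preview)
Your proof is correct and follows exactly the standard route: a pathwise Sobolev-type embedding $W^{1,p}\hookrightarrow L^\infty$ on the unit-size cube (iterated over coordinates), followed by Fubini to exchange $L^p(\Omega)$ and $L^p(\fR_{\vec{K}})$, then bounding the cube integral by the sup. The paper itself simply cites \cite[Lemma~4.3]{HX19} without giving details, and that reference uses precisely this argument, so your proposal matches the intended proof.
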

\begin{proof}
    Same as \cite[Lemma~4.3]{HX19}.
\end{proof}

    

Recall the definition of $X_u$ given in \eqref{e:poisson}.
\begin{lem}\label{lem:Dxe}
    For $k\in\NN$, $\theta\in\RR$ and $\vec{x}\in\RR^k$, we have
    \begin{equation*}
         D^k_{\vec{x}} e^{{\rm i}\theta X_u}=e^{{\rm i}\theta X_u} \left(e^{{\rm i}\theta \psi_u}-1\right)^{\otimes k}(\vec{x}).
    \end{equation*}
\end{lem}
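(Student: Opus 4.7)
The plan is a direct computation followed by a straightforward induction on $k$. The only facts required beyond the definition of $D_x$ are: (i) how $X_u$ transforms under the addition of an atom $\delta_x$ to $\eta$, and (ii) the fact that $D_x$ acts as zero on deterministic quantities, so it commutes with multiplication by deterministic factors.

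\textbf{Step 1 (base case $k=1$).} I would first compute $D_x X_u$. From the definition of $I_1$ in the preliminaries,
\begin{equation*}
    X_u = I_1(\psi_u) = \int \psi_u \, d\eta - \int \psi_u(y)\, dy,
\end{equation*}
so under the transformation $\eta \mapsto \eta + \delta_x$ the deterministic compensator is unchanged while $\int \psi_u \, d\eta$ increases by $\psi_u(x)$. Hence $D_x X_u = \psi_u(x)$, and consequently
\begin{equation*}
    D_x e^{{\rm i}\theta X_u} = e^{{\rm i}\theta (X_u + \psi_u(x))} - e^{{\rm i}\theta X_u} = e^{{\rm i}\theta X_u}\bigl(e^{{\rm i}\theta \psi_u(x)} - 1\bigr).
\end{equation*}

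\textbf{Step 2 (induction on $k$).} Suppose the formula holds for some $k\ge 1$. For $(\vec{x},x_{k+1})\in\RR^{k+1}$, I would apply $D_{x_{k+1}}$ to both sides of the inductive hypothesis. The factor $(e^{{\rm i}\theta\psi_u}-1)^{\otimes k}(\vec{x})$ is a deterministic function of $\vec{x}$, hence annihilated by $D_{x_{k+1}}$; since $D_{x_{k+1}}$ is the Malliavin difference operator, multiplication by any deterministic quantity commutes with it. Combining this with the base case gives
\begin{equation*}
    D^{k+1}_{(\vec{x},x_{k+1})} e^{{\rm i}\theta X_u}
    = \bigl(e^{{\rm i}\theta\psi_u}-1\bigr)^{\otimes k}(\vec{x}) \cdot e^{{\rm i}\theta X_u}\bigl(e^{{\rm i}\theta\psi_u(x_{k+1})}-1\bigr)
    = e^{{\rm i}\theta X_u}\bigl(e^{{\rm i}\theta\psi_u}-1\bigr)^{\otimes (k+1)}(\vec{x},x_{k+1}),
\end{equation*}
completing the induction.

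There is no substantial obstacle here: the identity is essentially the product rule for the discrete gradient on Poisson space specialized to the chain $e^{{\rm i}\theta X_u}$, which linearizes perfectly because $X_u$ is a first-order Wiener--It\^o integral. The only point requiring a small care is making explicit that $\psi_u(x)$ is the increment of $X_u$ under $\eta\mapsto\eta+\delta_x$, which is immediate from the definition of $I_1$ (the deterministic Lebesgue compensator does not see the added atom).
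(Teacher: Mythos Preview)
Your proof is correct and follows essentially the same approach as the paper: compute the single difference $D_x e^{{\rm i}\theta X_u}=e^{{\rm i}\theta X_u}(e^{{\rm i}\theta\psi_u(x)}-1)$ and then iterate. The paper phrases the iteration as ``repeating this operation'' rather than as a formal induction, and does not spell out the computation of $D_x X_u=\psi_u(x)$, but the argument is otherwise identical.
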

\begin{proof}
    For every $x\in\RR$, we have
    \begin{equation*}
         D_{x} e^{{\rm i}\theta X_u}=e^{{\rm i}\theta (X_u+\psi_u(x))}-e^{{\rm i}\theta X_u}=e^{{\rm i}\theta X_u} \left(e^{{\rm i}\theta \psi_u(x)}-1\right).
    \end{equation*}
    By repeating this operation, we obtain
    \begin{equation*}
         D^k_{\vec{x}} e^{{\rm i}\theta X_u}=e^{{\rm i}\theta X_u} \prod_{j=1}^k\left(e^{{\rm i}\theta \psi_u(x_j)}-1\right)=e^{{\rm i}\theta X_u} \left(e^{{\rm i}\theta \psi_u}-1\right)^{\otimes k}(\vec{x}).
    \end{equation*}
    This completes the proof.
\end{proof}

    The following lemma is a Fubini theorem for exchanging $\tT^{\ge d}$ with the integration with respect to the frequency.

\begin{lem}\label{TdphiX}
    Under the assumptions of Theorem~\ref{thm}, we have
    \begin{equation*}
        \mathcal{T}^{\ge d} \phi\left(X_u\right)= \langle\widehat{\phi},\tT^{\ge d} e^{{\rm i}\theta X_u}\rangle_{\theta}.
    \end{equation*}
\end{lem}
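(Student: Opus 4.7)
The plan is to derive the identity from the Fourier inversion $\phi(y)=\langle\widehat{\phi},e^{{\rm i}\theta y}\rangle_\theta$ and the commutation of the bounded linear operator $\mathcal{T}^{\ge d}$ with the $\theta$-pairing. The only subtlety is that $\widehat{\phi}$ is merely a tempered distribution, since $\phi$ can grow polynomially under \eqref{as:phi}, so the exchange cannot be performed by a naive Fubini; I would resolve this by mollifying $\phi$ and taking a limit.

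First I would check that $\langle\widehat{\phi},\mathcal{T}^{\ge d}e^{{\rm i}\theta X_u}\rangle_\theta$ is a well-defined element of $L^2(\Omega)$. For each $\omega$ the map $\theta\mapsto\mathcal{T}^{\ge d}e^{{\rm i}\theta X_u}$ is smooth, so Lemma~\ref{lem:local_decompose} bounds the pairing by $\sum_K\|\widehat{\phi}\|_{M_0+2,\fR_K}$ times the supremum of its $\theta$-derivatives up to order $M_0+2$. Taking $L^2(\Omega)$ norms and invoking Lemma~\ref{lem:exchange} reduces the task to bounding $\|\partial_\theta^r\mathcal{T}^{\ge d}e^{{\rm i}\theta X_u}\|_{L^2(\Omega)}$ for $r\le M_0+3$; using the contractivity of $\mathcal{T}^{\ge d}$ on $L^2(\Omega)$ (via \eqref{e:chaos_expansion}) together with $\partial_\theta^r e^{{\rm i}\theta X_u}=({\rm i}X_u)^r e^{{\rm i}\theta X_u}$, this reduces to $\mathbf{E}|X_u|^{2r}<\infty$, which holds since $X_u=I_1(\psi_u)$ with $\psi\in L^2$. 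The sum over $K$ then converges by Lemma~\ref{phihatdecay} and $\gamma_0>2$.

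Next I would prove the identity for a regularisation $\phi_\eps=\phi*\zeta_\eps$, where $\zeta_\eps$ is a smooth mollifier. Then $\widehat{\phi_\eps}=\widehat{\zeta_\eps}\cdot\widehat{\phi}$ is an integrable function, so $\phi_\eps(X_u)=\int\widehat{\phi_\eps}(\theta)\,e^{{\rm i}\theta X_u}\,d\theta$ holds as a Bochner integral in $L^2(\Omega)$ (using $\|e^{{\rm i}\theta X_u}\|_{L^2(\Omega)}\le 1$). Since $\mathcal{T}^{\ge d}$ is bounded and linear on $L^2(\Omega)$, it commutes with this integral, yielding
\begin{equation*}
    \mathcal{T}^{\ge d}\phi_\eps(X_u) = \langle\widehat{\phi_\eps},\mathcal{T}^{\ge d}e^{{\rm i}\theta X_u}\rangle_\theta.
\end{equation*}

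Finally I would pass to the limit $\eps\to 0$. On the left, the polynomial growth of $\phi\in\cC^{\gamma_0}_{M_0}$ together with $\mathbf{E}|X_u|^q<\infty$ for all $q$ yields $\phi_\eps(X_u)\to\phi(X_u)$ in $L^2(\Omega)$ by dominated convergence, and contractivity of $\mathcal{T}^{\ge d}$ transfers this to $\mathcal{T}^{\ge d}\phi_\eps(X_u)\to\mathcal{T}^{\ge d}\phi(X_u)$. On the right, Lemma~\ref{mollifyU} applied to $\widehat{\phi}$ together with Lemma~\ref{phihatdecay} yields $\eps$-convergence in the local norms $\|\cdot\|_{M_0+3,\fR_K}$ that is summable against $(1+|K|)^{-\gamma_0}$, which combined with the first step's uniform $L^2(\Omega)$ control on $\partial_\theta^r\mathcal{T}^{\ge d}e^{{\rm i}\theta X_u}$ gives convergence of the pairings in $L^2(\Omega)$. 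The main obstacle is coordinating these two convergences through compatible norms — the distributional $\|\cdot\|_{M,\fR_K}$ on the $\theta$-side and $L^2(\Omega)$ on the random side — which is precisely what Lemma~\ref{lem:exchange} is designed to enable.
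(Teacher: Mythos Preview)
The paper does not supply a proof for this lemma, so there is no argument to compare against directly. Your overall strategy --- regularise $\phi$, commute the bounded projector $\mathcal{T}^{\ge d}$ with a genuine integral, then pass to the limit via the local-norm machinery of Section~\ref{sec:preliminaries} --- is the natural one, and it is consistent with how the paper handles analogous limits later (see the proof of Lemma~\ref{lem:Y_eps-Y}). Your first step, verifying that the right-hand side is well defined in $L^2(\Omega)$, is fine.

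There is, however, a genuine gap in your second step. Setting $\phi_\eps=\phi*\zeta_\eps$ gives $\widehat{\phi_\eps}=\widehat{\zeta_\eps}\cdot\widehat{\phi}$, which is a tempered distribution multiplied by a Schwartz function --- still a distribution, not an integrable function. Concretely, $\phi(x)=x^{M_0}$ is admissible under \eqref{as:phi}, and then $\widehat{\phi}$ is a multiple of $\delta^{(M_0)}$; the product $\widehat{\zeta_\eps}\cdot\widehat{\phi}$ remains a linear combination of $\delta,\delta',\dots,\delta^{(M_0)}$ and is certainly not in $L^1$. Convolving $\phi$ smooths it but does nothing about its polynomial growth, which is precisely the obstruction to $\widehat{\phi}$ being a function. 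You need a regularisation that also tames the growth: the paper's own choice $\phi_\eps=(\phi\cdot\rho(\eps\,\cdot))*(\eps^{-1}\widehat{\rho}(\cdot/\eps))$ from the proof of Theorem~\ref{thm:fdd_convergence} does exactly this, producing $\widehat{\phi_\eps}=(\widehat{\phi}*(\eps^{-1}\widehat{\rho}(\cdot/\eps)))\cdot\rho(\eps\,\cdot)$, which is smooth, bounded, and compactly supported. With that substitution your second step goes through as a Bochner integral. Note also that, on the Fourier side, $\widehat{\phi_\eps}-\widehat{\phi}$ now involves both a mollification and a cutoff, so the limit in Step~3 requires both Lemma~\ref{mollifyU} and Lemma~\ref{lem:truncateU} (exactly as in \eqref{eq:phi_eps_doublesum}), not Lemma~\ref{mollifyU} alone.
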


\section{Tightness} \label{sec:tightness}

   In this section, we establish the tightness of $(\mathcal{T}^{\ge d} Y_n)_{n\ge1}$ in $\mathbf{D}([0,1])$. The corresponding result for $(\mathcal{T}^{\ge d} Z_n)_{n\ge1}$ will be presented in Section~\ref{sec:tightness_z}.

\subsection{Tightness of \TitleEquation{(\mathcal{T}^{\ge d} Y_n)_{n\geq 1}}{(mathcal{T} {ge d} Y n) {n geq 1}}}

\begin{thm}\label{thm:tightness}
    Under the assumptions of Theorem~\ref{thm}, the family $(\mathcal{T}^{\ge d} Y_n)_{n\ge1}$ is tight in the Skorohod space $\mathbf{D}([0,1])$.
\end{thm}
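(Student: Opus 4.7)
The plan is to reduce tightness in $\mathbf{D}([0,1])$ to a standard moment criterion (Billingsley): it is enough to exhibit $p>2$ and a constant $C$, uniform in $n$, such that
\[
\mathbf{E}\bigl|\mathcal{T}^{\ge d}\bigl(Y_n(t)-Y_n(s)\bigr)\bigr|^{p}\;\le\; C\,|t-s|^{p/2}
\]
for all $0\le s\le t\le 1$. The covariance bound in Remark~\ref{cov} already delivers this scaling at $p=2$, so the real task is to promote the moment exponent past $2$ without sacrificing the $|t-s|$-power.

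In the Gaussian setting this promotion is done via Meyer's inequality, which is unavailable for the Poisson chaos. Instead I would invoke the $L^p$ spectral gap inequality of Proposition~\ref{prop:sg_ineq} and apply it iteratively to $F=\mathcal{T}^{\ge d}(Y_n(t)-Y_n(s))$, noting $\mathbf{E}F=0$. Each application expresses $\|F\|_{L^p}$ in terms of mixed $L^2/L^p$ norms of its Malliavin derivatives $D^k F$, and the identity $D_x\mathcal{T}^{\ge d}=\mathcal{T}^{\ge d-1}D_x$ ensures that every intermediate expression still carries the truncation, so the spectral gap may be reapplied. After at most $d$ iterations one obtains a sum of mixed-norm quantities involving $D^k(Y_n(t)-Y_n(s))$ for $1\le k\le d$, with each derivative admitting the explicit representation
\[
D^k_{\vec x}\phi(X_u)\;=\;\Delta^k_{\psi_u(x_1),\dots,\psi_u(x_k)}\phi(X_u),
\]
where $\Delta^k$ is the iterated mixed finite difference. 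Under \eqref{as:phi} and the fact that $X_u$ has moments of every order, this yields the pointwise estimate $\|D^k_{\vec x}\phi(X_u)\|_{L^q(\Omega)}\lesssim \prod_{j=1}^{k}|\psi_u(x_j)|$, with the $X_u$-polynomial growth absorbed into Poisson moments.

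Summing over $u\in\{[ns],\dots,[nt]-1\}$ and taking the $L^2_{\vec x}$-inner product, the diagonal contribution is of order $n(t-s)\|\psi\|_{2k}^{2k}$ and the off-diagonal part produces the kernel $\langle\psi_u,\psi_v\rangle^k$. By \eqref{as:psi}, the latter is bounded by $(1+|u-v|)^{k(1-2\alpha)}$, and the choice of $d_\alpha$ combined with $k\ge d\ge d_\alpha$ makes this series absolutely summable, so that
\[
\Bigl\|\|D^k(Y_n(t)-Y_n(s))\|_{L^2(\RR^k)}\Bigr\|_{L^q(\Omega)}\;\lesssim\;(t-s)^{1/2}.
\]
The corresponding $L^p_{\vec x}$-norm is handled analogously using that $\psi^{\otimes k}\in L^p(\RR^k)$. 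Raising these quantities to the $p$-th power as prescribed by the output of the spectral gap iteration yields the desired bound $|t-s|^{p/2}$ for any $p>2$.

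The main obstacle will be the careful bookkeeping of the spectral gap iteration: the inequality returns Malliavin derivative terms in several different mixed $L^2/L^p$ norms and with various combinatorial weights, and one must verify at each stage that the truncation $\mathcal{T}^{\ge d-k}$ persists so that the mean-zero hypothesis still applies, and that the kernel structure at the terminal stage matches the summability threshold $d_\alpha$. The condition $\alpha>\tfrac12+\tfrac{1}{2d}$ is used precisely when the iteration reaches depth $d$, guaranteeing $\sum_u|\langle\psi_0,\psi_u\rangle|^d<\infty$, consistently with the covariance decay of Remark~\ref{cov}.
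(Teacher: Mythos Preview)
Your overall plan---iterate the spectral gap inequality and feed the resulting Malliavin-derivative terms into a kernel bound of the type $\prod_j|\psi_u(x_j)|$---is the right idea, but there is a genuine gap concerning the depth of the iteration and the regularity of~$\phi$.

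You propose to iterate to depth $d$, so that the truncation disappears via $D^d_{\vec x}\,\mathcal{T}^{\ge d}\phi(X_u)=D^d_{\vec x}\phi(X_u)$, and then bound the $d$-th iterated finite difference by $\prod_{j=1}^d|\psi_u(x_j)|$. That last estimate, however, requires essentially $d$ derivatives of~$\phi$: one needs $[\gamma_0]\ge d$ for the mixed-difference bound $|\Delta^d_{h_1,\dots,h_d}\phi(x)|\lesssim (1+|x|)^{M}\prod_j|h_j|$ to hold. The theorem only assumes $\gamma_0>d_\alpha+1$, and $d_\alpha=[\frac{1}{2\alpha-1}]+1$ can be much smaller than $d$ (for instance $d_\alpha=1$ whenever $\alpha>1$, while $d$ is arbitrary). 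So under the stated hypotheses your key pointwise bound simply fails. Stopping the iteration earlier, at depth $d_\alpha$, does not help directly: you are then left with $\mathcal{T}^{\ge d-d_\alpha}D^{d_\alpha}_{\vec x}\phi(X_u)$, and the projection $\mathcal{T}^{\ge d-d_\alpha}$ is not bounded on $L^p$ for $p>2$; writing it as identity minus a finite chaos sum brings back kernels $f_q=\tfrac{1}{q!}\mathbf{E}D^q\phi(X_u)$ with $q$ up to $d-1$, for which you again lack the required pointwise decay in all variables.

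The paper resolves exactly this obstruction by Fourier-expanding first, $\mathcal{T}^{\ge d}\phi(X_u)=\langle\widehat\phi,\,\mathcal{T}^{\ge d}e^{i\theta X_u}\rangle_\theta$ (Lemma~\ref{TdphiX}). For the exponential every Malliavin derivative is explicit, so $\|D^{d_\alpha}_{\vec x}\mathcal{T}^{\ge d}e^{i\theta X_u}\|_{L^p(\Omega)}\lesssim (1+|\theta|)^{d_\alpha}\Psi_{\alpha,u}^{\otimes d_\alpha}(\vec x)$ can be obtained \emph{at depth $d_\alpha$}, with the truncation handled term by term (Corollary~\ref{DALp}). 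The polynomial loss in $\theta$ is then absorbed by the local decay $\|\widehat\phi\|_{M_0+2,\mathfrak R_K}\lesssim (1+|K|)^{-\gamma_0}$ of Lemma~\ref{phihatdecay}, and this is precisely where the assumption $\gamma_0>d_\alpha+1$ enters. In short, the Fourier step trades ``$d$ derivatives of $\phi$'' for ``$\gamma_0>d_\alpha+1$'', which is the content of the hypothesis.

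A minor additional point: the one-sided bound $\mathbf{E}|\mathcal{T}^{\ge d}(Y_n(t)-Y_n(s))|^p\lesssim |t-s|^{p/2}$ that you aim for is actually false for the c\`adl\`ag process $Y_n$ near its jump times (take $s,t$ straddling a point of $\frac{1}{n}\mathbb{Z}$ with $t-s\ll 1/n$). The correct target is the paper's Lemma~\ref{lem:tightness}, namely $\|\cdot\|_{L^p}\lesssim\sqrt{([nt]-[ns])/n}$, which is the right form for the Skorohod criterion.
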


To prove Theorem~\ref{thm:tightness}, we rely on a sufficient condition for the tightness in $\mathbf{D}([0,1])$, which is stated in th following lemma.
\begin{lem}\cite[Lemma~3.1]{nualart_breuer_clt}\label{lem:tightness}
    If there exists $p>2$ such that
    \begin{equation} \label{e:tightness}
        \|\mathcal{T}^{\ge d} Y_n(t)- \mathcal{T}^{\ge d} Y_n(s)\|_{L^p(\Omega)} \lesssim \sqrt{\frac{[nt]-[ns]}{n}}
    \end{equation}
    holds uniformly in $0 \leq s < t \leq 1$ and $n$, then the family $(\mathcal{T}^{\ge d} Y_n)_{n\ge1}$ is tight in $\mathbf{D}([0,1])$.
\end{lem}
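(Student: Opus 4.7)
The plan is to invoke the standard Billingsley moment criterion for tightness in the Skorohod space $\mathbf{D}([0,1])$: a sequence $(X_n)$ of càdlàg processes with $X_n(0)$ tight is tight in $\mathbf{D}([0,1])$ provided there exist constants $\beta > 0$, $\alpha > 1/2$, and a continuous nondecreasing function $F$ on $[0,1]$ such that
\begin{equation*}
    \E\bigl[|X_n(t)-X_n(s)|^{\beta}\,|X_n(u)-X_n(t)|^{\beta}\bigr] \leq (F(u)-F(s))^{2\alpha}
\end{equation*}
for all $0 \leq s \leq t \leq u \leq 1$ and all $n$. Applied to $X_n = \mathcal{T}^{\ge d} Y_n$ (which vanishes at $t=0$, so the initial tightness is automatic), the task reduces to producing such a three-point moment bound from the two-point hypothesis \eqref{e:tightness}.

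First I would apply the Cauchy--Schwarz inequality with exponent $\beta = p/2$ to get
\begin{equation*}
    \E\bigl[|\mathcal{T}^{\ge d}Y_n(t)-\mathcal{T}^{\ge d}Y_n(s)|^{p/2}\,|\mathcal{T}^{\ge d}Y_n(u)-\mathcal{T}^{\ge d}Y_n(t)|^{p/2}\bigr] \lesssim \Bigl(\tfrac{[nt]-[ns]}{n}\Bigr)^{p/4}\Bigl(\tfrac{[nu]-[nt]}{n}\Bigr)^{p/4}.
\end{equation*}
The next step is to replace the right-hand side by a power of $u-s$ alone. Here I would exploit the fact that $\mathcal{T}^{\ge d} Y_n$ is piecewise constant on each interval $[k/n,(k+1)/n)$, so that the left-hand side vanishes unless each of $[s,t]$ and $[t,u]$ contains at least one grid point $k/n$. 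In that nontrivial regime one has $[nt]-[ns]\geq 1$ and $[nu]-[nt]\geq 1$, which forces $u-s\geq 1/n$; combining this with the elementary inequality $([nt]-[ns])/n \leq (t-s) + 1/n$ and AM--GM yields
\begin{equation*}
    \E\bigl[|\mathcal{T}^{\ge d}Y_n(t)-\mathcal{T}^{\ge d}Y_n(s)|^{p/2}\,|\mathcal{T}^{\ge d}Y_n(u)-\mathcal{T}^{\ge d}Y_n(t)|^{p/2}\bigr] \lesssim (u-s)^{p/2}
\end{equation*}
uniformly in $n$. Since $p>2$, taking $\alpha = p/4 > 1/2$, $\beta = p/2$, and $F(t) = Ct$ in Billingsley's criterion closes the argument.

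Since the lemma is a direct application of a classical moment criterion, no step is particularly delicate; the only technical care required is the piecewise-constant reduction, which prevents the jump structure of $Y_n$ from causing a loss in the exponent. The genuine work of the section lies not in this lemma but in verifying the hypothesis \eqref{e:tightness}, which is carried out in the remainder of Section~\ref{sec:tightness} via the $L^p$ spectral gap inequality for the Poisson point process.
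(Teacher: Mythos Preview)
Your proposal is correct and follows the standard Billingsley moment criterion for tightness in $\mathbf{D}([0,1])$. Note, however, that the paper does not supply its own proof of this lemma: it is stated as a direct citation of \cite[Lemma~3.1]{nualart_breuer_clt}, so there is nothing in the paper to compare against. Your argument is essentially the one found in that reference, and the piecewise-constant reduction you describe is exactly the point of the lemma as formulated there.
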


A key tool to establish \eqref{e:tightness} is the following $L^p$ spectral gap inequality of the Poisson point process. 

\begin{prop}\cite[Proposition~3.8]{KWX24}\label{prop:sg_ineq}
    For every $p \ge 2$ and $F \in L_\eta^1$, we have
        \begin{equation*}
            \|F\|_{L^p(\Omega)} \lesssim |\mathbf E F| + \|\|D_x F\|_{L^p(\Omega)}\|_{L^2(\RR)} + \|\|D_x F\|_{L^p(\Omega)}\|_{L^p(\RR)},
        \end{equation*}
    where the proportionality constant is independent of $F$.
\end{prop}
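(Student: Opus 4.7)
The plan is to establish the bound by combining the Clark-Ocone martingale representation for Poisson functionals with a Burkholder-Davis-Gundy (BDG) / Bichteler-Jacod inequality for compensated Poisson stochastic integrals. The two summands on the right-hand side of the stated inequality correspond precisely to the continuous and jump contributions of the BDG bound applied to the representing martingale.

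First, writing $F = \mathbf{E}F + (F - \mathbf{E}F)$ and applying the triangle inequality reduces the task to bounding the centered piece $\|F - \mathbf{E}F\|_{L^p(\Omega)}$. Fix a right-continuous filtration $(\mathcal{F}_t)_{t \in \mathbb{R}}$ such that for every $t$ the restriction $\eta|_{(-\infty, t]}$ is $\mathcal{F}_t$-measurable while $\eta|_{(t, \infty)}$ is independent of $\mathcal{F}_t$. The Clark-Ocone formula for Poisson functionals then gives
\begin{equation*}
    F - \mathbf{E}F \;=\; \int_{\mathbb{R}} U_x \, d\tilde{\eta}(x), \qquad U_x \;:=\; \mathbf{E}\bigl[D_x F \,\big|\, \mathcal{F}_{x^-}\bigr],
\end{equation*}
where $\tilde{\eta} = \eta - dx$ is the compensated Poisson measure. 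Next, invoking the Bichteler-Jacod inequality for $p \geq 2$ one obtains
\begin{equation*}
    \Bigl\|\int_{\mathbb{R}} U_x \, d\tilde{\eta}(x)\Bigr\|_{L^p(\Omega)}^{p} \;\lesssim\; \mathbf{E}\Bigl(\int_{\mathbb{R}} U_x^{2} \, dx\Bigr)^{p/2} + \mathbf{E}\int_{\mathbb{R}} |U_x|^{p} \, dx.
\end{equation*}
Minkowski's integral inequality on the first term (permissible since $p/2 \geq 1$) and Fubini on the second convert these to the mixed norms $\bigl\|\|U_x\|_{L^p(\Omega)}\bigr\|_{L^2(\mathbb{R})}$ and $\bigl\|\|U_x\|_{L^p(\Omega)}\bigr\|_{L^p(\mathbb{R})}$; the contractivity of conditional expectation (Jensen) then replaces $U_x$ by $D_x F$ without loss.

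The main obstacle is the tension between the integrability hypothesis $F \in L^1_{\eta}$ in the statement and the $L^2$ hypothesis under which the Clark-Ocone formula is classically stated. I would circumvent this by first proving the inequality for bounded cylinder functionals of $\eta$ (where both the representation and the BDG bound are unambiguous), then passing to the limit via a truncation/density argument controlled precisely by the right-hand side already established. A secondary technicality is that the Bichteler-Jacod inequality as invoked must be used in its mixed-norm form: this can either be cited directly (e.g.\ in the Marinelli-R\"ockner formulation), or derived from the sharper BDG inequality for purely discontinuous martingales combined with Rosenthal-type estimates on the predictable quadratic variation of the stochastic integral.
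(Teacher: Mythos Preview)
The paper does not supply a proof of this proposition; it is quoted verbatim as \cite[Proposition~3.8]{KWX24} and used as a black box throughout Section~\ref{sec:tightness}. There is therefore no ``paper's own proof'' to compare against.

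That said, your outline is the standard route to such an $L^p$ Poincar\'e inequality on Poisson space and is correct in its essentials. The Clark--Ocone representation $F-\mathbf{E}F=\int \mathbf{E}[D_xF\mid\mathcal{F}_{x^-}]\,d\tilde\eta(x)$ together with the Bichteler--Jacod (or Kunita, or Marinelli--R\"ockner) moment bound for compensated Poisson integrals gives exactly the two mixed norms appearing on the right, and the passage from the predictable projection $U_x$ to $D_xF$ via Jensen is clean. Your identification of the $L^1_\eta$ versus $L^2_\eta$ issue is also apt: the Clark--Ocone formula in the form you invoke requires $F\in L^2_\eta$, so a truncation argument (replace $F$ by $(-n)\vee F\wedge n$ and let $n\to\infty$, noting the right-hand side is monotone in the truncation) is indeed needed to cover the stated hypothesis. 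One minor point: when applying Minkowski to $\mathbf{E}\bigl(\int U_x^2\,dx\bigr)^{p/2}$ you should write it as $\bigl\|\|U_x\|_{L^2(dx)}\bigr\|_{L^p(\Omega)}$ first and then swap the order of norms, which is legitimate precisely because $p\ge 2$; you have this right but it is worth stating the direction of the inequality explicitly.
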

\begin{cor}\label{cor:multi_sg_ineq}
    For every $p \ge 2$, every $k\ge 1$, every $q\ge k$ and $F \in L_\eta^1$, we have
        \begin{equation*}
            \begin{aligned}
                \|\tT^{\ge q}F\|_{L^p(\Omega)} &\lesssim \|\cdots\|\|D^k_{\vec{x}}(\tT^{\ge q} F)\|_{L^p(\Omega)}\|_{L^{p}(\RR)\cap L^2(\RR)}\cdots\|_{L^{p}(\RR)\cap L^2(\RR)}\\
                & \lesssim \sum_{v=0}^{k} \bigg\|\bigg\| \|D^{k}_{\vec{x}}(\tT^{\ge q} F)\|_{L^{p}(\Omega)}\bigg\|_{L^{p}(d\vec{y}_v)}\bigg\|_{L^2(d\vec{z}_v)},
            \end{aligned}
        \end{equation*}
    where $\vec{x}=(x_1,\dots,x_{k})$, $\vec{y}_v = (x_1,\dots,x_v)$, $\vec{z}_v=(x_{v+1},\dots,x_{k})$ and the proportionality constant is independent of $F$.
\end{cor}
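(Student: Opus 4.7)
The first inequality follows by iterating the $L^p$ spectral gap of Proposition~\ref{prop:sg_ineq} exactly $k$ times. The second inequality is then extracted by expanding each intersection norm $L^p \cap L^2 = L^p + L^2$, then using Minkowski's integral inequality together with the symmetry of $D^k_{\vec{x}} \tT^{\ge q}F$ in $\vec{x}$ to collapse the resulting $2^k$ terms into the $k+1$ canonical ones.

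\textbf{First inequality by iteration.} For any mean-zero $G \in L^1_\eta$, Proposition~\ref{prop:sg_ineq} reads
\begin{equation*}
\|G\|_{L^p(\Omega)} \lesssim \bigl\|\|D_x G\|_{L^p(\Omega)}\bigr\|_{L^p(dx) \cap L^2(dx)},
\end{equation*}
with the shorthand $\|\cdot\|_{L^p \cap L^2}:=\|\cdot\|_{L^p}+\|\cdot\|_{L^2}$. The plan is to apply this inductively with $G=D^j_{\vec{x}_j}\tT^{\ge q}F$ (for each fixed $\vec{x}_j=(x_1,\dots,x_j)$), then take the $L^p\cap L^2$-norm in $x_j$, for $j=0,1,\dots,k-1$. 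The crucial point is that iterating~\eqref{DxFchaos} shows the chaos expansion of $D^j_{\vec{x}_j}\tT^{\ge q}F$ begins at order $q-j$, so $\mathbf{E}[D^j_{\vec{x}_j}\tT^{\ge q}F]=0$ whenever $q-j\ge 1$. The most demanding step, $j=k-1$, requires exactly $q\ge k$, which is the stated hypothesis. Iterating $k$ times produces the first inequality.

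\textbf{Second inequality via Minkowski and symmetry.} Writing $\|\cdot\|_{L^p\cap L^2}=\|\cdot\|_{L^p}+\|\cdot\|_{L^2}$ and multiplying out the $k$ nested norms produces a sum of $2^k$ iterated mixed-norm terms, one for each choice $(a_1,\dots,a_k)\in\{p,2\}^k$. Because $p\ge 2$, Minkowski's integral inequality gives
\begin{equation*}
\bigl\|\|f\|_{L^2(dx)}\bigr\|_{L^p(dy)}\le \bigl\|\|f\|_{L^p(dy)}\bigr\|_{L^2(dx)},
\end{equation*}
which lets me swap any adjacent configuration (inner $L^2$, outer $L^p$) into (inner $L^p$, outer $L^2$) at the cost of an upward inequality. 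Iterating these swaps sorts every term into the canonical order in which all $L^p$ factors are innermost and all $L^2$ factors outermost. Commutativity of the difference operators $D_{x_i}$ makes the map $\vec{x}\mapsto\|D^k_{\vec{x}}\tT^{\ge q}F\|_{L^p(\Omega)}$ symmetric, so after relabelling the resulting bound depends only on $v:=|\{i:a_i=p\}|$ and coincides with the $v$-th summand on the right-hand side. Summing over $v\in\{0,1,\dots,k\}$ and absorbing the $\binom{k}{v}$ combinatorial multiplicities into the implicit constant yields the stated bound.

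\textbf{Expected main obstacle.} The only subtle step is checking the mean-vanishing requirement of Proposition~\ref{prop:sg_ineq} at every stage of the iteration; this is exactly where the hypothesis $q\ge k$ is used and is the only place it enters. The rest is bookkeeping with Minkowski's integral inequality and the symmetry of higher-order Malliavin derivatives.
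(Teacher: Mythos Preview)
Your proposal is correct and follows essentially the same route as the paper's proof: iterate Proposition~\ref{prop:sg_ineq} $k$ times for the first inequality, then invoke Minkowski's integral inequality for the second. Your write-up is in fact more complete than the paper's, which does not explicitly mention where the hypothesis $q\ge k$ enters (mean-vanishing at each iteration) or the symmetry argument that reduces the $2^k$ mixed-norm terms to $k+1$; both are exactly as you describe.
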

\begin{proof}
    By Proposition~\ref{prop:sg_ineq}, we have
        \begin{equation}
            \|\tT^{\ge q}F\|_{L^p(\Omega)} \lesssim \|\|D_{x}(\tT^{\ge q} F)\|_{L^p(\Omega)}\|_{L^{p}(\RR)\cap L^2(\RR)}
        \end{equation}
    Repeating this operation to the term with $L^p(\Omega)$-norm for another $k-1$ times, we obtain the first inequality. We use the Minkowski inequality to transform the $L^p(\RR)$-norm into the $L^2(\RR)$-norm in the second inequality. This concludes our proof.
\end{proof}
\begin{cor}\label{cor:I_k_Lp}
    For every $p \ge 2$, every $k\ge1$ and $f\in L^2(\RR^k)$, we have
        \begin{equation*}
            \| I_k(f)\|_{L^p(\Omega)} \lesssim \|\cdots\|f\|_{L^{p}(\RR)\cap L^2(\RR)}\cdots\|_{L^{p}(\RR)\cap L^2(\RR)},
        \end{equation*}
    where the proportionality constant is independent of $f\in L^2(\RR^k)$.
\end{cor}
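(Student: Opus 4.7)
The plan is to deduce Corollary~\ref{cor:I_k_Lp} as an immediate consequence of Corollary~\ref{cor:multi_sg_ineq}, applied with $F = I_k(f)$ and $q = k$. We may assume $f \in L^2_s(\RR^k)$ (symmetrize otherwise, which only decreases the relevant $L^p \cap L^2$ norms by Minkowski). Since $I_k(f)$ lives in the $k$-th Wiener--It\^o chaos, we have $\mathcal{T}^{\ge k} I_k(f) = I_k(f)$, so the left-hand side of Corollary~\ref{cor:multi_sg_ineq} is exactly $\|I_k(f)\|_{L^p(\Omega)}$.

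The next step is to compute $D^k_{\vec{x}} I_k(f)$. Applying formula~\eqref{DxFchaos} once yields $D_{x_1} I_k(f) = k\, I_{k-1}(f(x_1,\cdot))$, and iterating this $k$ times produces $D^k_{\vec{x}} I_k(f) = k!\, f(\vec{x})$, a deterministic quantity. Consequently $\|D^k_{\vec{x}} I_k(f)\|_{L^p(\Omega)} = k!\, |f(\vec{x})|$, and substituting this into the second inequality of Corollary~\ref{cor:multi_sg_ineq} gives
\begin{equation*}
    \|I_k(f)\|_{L^p(\Omega)} \lesssim \sum_{v=0}^{k} \bigl\| \|f\|_{L^p(d\vec{y}_v)} \bigr\|_{L^2(d\vec{z}_v)},
\end{equation*}
which, after bounding the mixed norms by the iterated $L^p(\RR) \cap L^2(\RR)$ norm appearing in the first inequality of Corollary~\ref{cor:multi_sg_ineq}, is exactly the claimed estimate.

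No real obstacle is expected: the only subtle point is checking that the iterated Malliavin derivative of a single chaos is deterministic and equal to $k!\, f$, but this is standard from \eqref{DxFchaos} once we work with a symmetric kernel. The symmetrization step and the passage from $L^p(d\vec{y}_v)$-then-$L^2(d\vec{z}_v)$ norms to the fully iterated $L^p \cap L^2$ norm are routine applications of Minkowski's inequality.
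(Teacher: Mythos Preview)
Your proposal is correct and follows exactly the same route as the paper: apply Corollary~\ref{cor:multi_sg_ineq} with $F=I_k(f)$, $q=k$, and use $D^k_{\vec{x}}I_k(f)=k!\,f(\vec{x})$. One minor remark: the detour through the second inequality of Corollary~\ref{cor:multi_sg_ineq} and then back to the iterated $L^p\cap L^2$ norm is unnecessary---the \emph{first} inequality there already yields the iterated norm directly once you substitute $\|D^k_{\vec{x}}I_k(f)\|_{L^p(\Omega)}=k!\,|f(\vec{x})|$.
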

\begin{proof}
    The conclusion follows from $D_{\vec{x}}^k I_k(f)=k! f(\vec{x})$ and Corollary~\ref{cor:multi_sg_ineq}.
\end{proof}
Before proceeding on the proof of Theorem~\ref{thm:tightness}, we introduce some technical lemmas. For simplicity, we define
\begin{equation} \label{e:def:psi}
     \Psi_{\beta}(x)=(1+|x|)^{-\beta},\quad\Psi_{\beta,u}(x)=\Psi_{\beta}(x-u)
\end{equation}
for $x,\beta\in\RR$ and $u\in\ZZ$. These functions are designed to bound the moments of the Malliavin derivative of $e^{{\rm i} \theta X_u}$.

\begin{lem}\label{Dxeithetalem}
    For every $p \geq 1$ and every $r \in \NN$, there exists $C_{p,r}>1$ such that
    \begin{equation*}
        \left \|D^k_{\vec{x}} \d_\theta^r(e^{{\rm i}\theta X_u})\right\|_{L^p(\Omega)}\le C_{p,r}^k(1+|\theta|)^k\Psi_{\alpha,u}^{\otimes k}(\vec{x})
    \end{equation*}
    holds for all $\theta \in \RR$, $k\in \NN$ and all $\vec{x} \in \RR^k$.
\end{lem}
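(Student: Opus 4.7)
The plan is to commute the discrete Malliavin derivative $D^k_{\vec x}$ with the smooth parameter derivative $\d_\theta^r$ (they act on independent coordinates of the random field $(\omega,\theta)\mapsto e^{{\rm i}\theta X_u(\omega)}$) and to apply Lemma~\ref{lem:Dxe} first, which yields
\begin{equation*}
    D^k_{\vec x}\,\d_\theta^r e^{{\rm i}\theta X_u}=\d_\theta^r\Big(e^{{\rm i}\theta X_u}\prod_{j=1}^k(e^{{\rm i}\theta\psi_u(x_j)}-1)\Big).
\end{equation*}
The Leibniz rule then expands this into a finite sum indexed by tuples $(r_0,r_1,\dots,r_k)\in\NN^{k+1}$ with $r_0+\cdots+r_k=r$, whose typical term is
\begin{equation*}
    \binom{r}{r_0,\dots,r_k}({\rm i}X_u)^{r_0}\,e^{{\rm i}\theta X_u}\prod_{j:r_j\ge1}({\rm i}\psi_u(x_j))^{r_j}\,e^{{\rm i}\theta\psi_u(x_j)}\prod_{j:r_j=0}(e^{{\rm i}\theta\psi_u(x_j)}-1).
\end{equation*}

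Next I would take the $L^p(\Omega)$ norm. All phases $e^{{\rm i}\theta(\cdot)}$ have modulus one, so the only random factor is $|X_u|^{r_0}$. Since $X_u=I_1(\psi_u)$ and $\eta$ is translation invariant we have $X_u\eqd X_0$, and \eqref{as:psi} with $\alpha>\frac{1}{2}$ forces $\psi$ to lie in every $L^q(\RR)$ with $q\ge 2$; the standard moment bound for compensated Poisson integrals (equivalently, Corollary~\ref{cor:I_k_Lp} with $k=1$) then gives $\|X_u^{r_0}\|_{L^p(\Omega)}\le M_{p,r_0}$ for some constant $M_{p,r_0}$ independent of $u$.

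For the deterministic factors I would use two elementary estimates: $|\psi_u(x_j)|^{r_j}\le C_\psi^{r_j}\,\Psi_{\alpha,u}(x_j)$ for $r_j\ge 1$ (from \eqref{as:psi}, using $\Psi_{\alpha,u}\le 1$), and
\begin{equation*}
    |e^{{\rm i}\theta\psi_u(x_j)}-1|\le |\theta|\,|\psi_u(x_j)|\le (1+|\theta|)\,C_\psi\,\Psi_{\alpha,u}(x_j),
\end{equation*}
which follows from $|e^{{\rm i}s}-1|\le|s|$. The product over $j=1,\dots,k$ thus contributes exactly $\Psi_{\alpha,u}^{\otimes k}(\vec x)$ in the spatial variables, together with a power of $1+|\theta|$ equal to $\#\{j:r_j=0\}\le k$. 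Summing the $\binom{k+r}{r}$ tuples produces a prefactor polynomial in $k$ for fixed $r$, which is dominated by $C_{p,r}^k$ after absorbing $\max_{0\le r_0\le r}M_{p,r_0}$, $C_\psi^{r+k}$, and the multinomial count. No serious obstacle arises; the only point worth checking carefully is the uniform-in-$u$ finiteness of $\|X_u^{r_0}\|_{L^p(\Omega)}$, which is guaranteed by translation invariance together with the decay \eqref{as:psi} of $\psi$.
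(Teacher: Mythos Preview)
Your proposal is correct and follows essentially the same route as the paper: apply Lemma~\ref{lem:Dxe}, expand $\d_\theta^r$ by the Leibniz rule over the $k+1$ factors, bound $|e^{{\rm i}\theta\psi_u(x_j)}-1|\le |\theta|\,|\psi_u(x_j)|$ and $|\psi_u(x_j)|^{r_j}\lesssim \Psi_{\alpha,u}(x_j)$, and invoke finiteness of all moments of $X_u$ (the paper cites Proposition~\ref{prop:sg_ineq} directly; you cite Corollary~\ref{cor:I_k_Lp} with $k=1$, which amounts to the same thing). If anything, your write-up is slightly more careful than the paper's about tracking the $k$-dependence of the constant: noting that the number of multi-indices is $\binom{k+r}{r}$, polynomial in $k$ for fixed $r$, is exactly what justifies the form $C_{p,r}^k$ of the bound.
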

\begin{proof}
Using the expression of $D_{\vec{x}}^k \d_\theta^r (e^{{\rm i} \theta X_u})$ in Lemma~\ref{lem:Dxe} and applying $r$ derivatives to it with respect to $\theta$, we see $D_{\vec{x}}^k \d_\theta^r (e^{{\rm i} \theta X_u})$ is a linear combination of terms of the form
\begin{equation*}
    (\d_\theta^{r_0}e^{{\rm i}\theta X_u}) \cdot \prod_{j=1}^k \d_\theta^{r_j} \big( e^{{\rm i}\theta \psi_u(x_j)}-1 \big)
\end{equation*}
with $\sum_{j=0}^{k+1}r_j=r$. Each such term can be bounded by
\begin{equation*}
    \begin{aligned}
        &\phantom{111}|X_u|^{r_0}\prod_{j=1}^k \Big(\left| e^{{\rm i}\theta \psi_u(x_j)}-1 \right| \cdot \1_{r_j=0} + |\psi_u(x_j)|^{r_j} \1_{r_j \geq 1} \Big)\\
        &\lesssim (1 + |X_0|^k) (1 + |\theta|)^k \prod_{j=1}^{k} \psi_u(x_j)\;.
    \end{aligned}
\end{equation*}
Since $D_x X_u = \psi_u(x)$ and $\alpha>\frac{1}{2}$, by Proposition~\ref{prop:sg_ineq}, we know $X_u$ has finite moments of all orders. Hence, taking $L^p(\Omega)$-norm on the above yields the desired claim. 
\end{proof}

\begin{lem}\label{DxeIithetalem}
    For every $p\ge2$, every $k,r\in\NN$, and every $q\ge k$, we have
    \begin{equation*}
        \left \|D^k_{\vec{x}} \d_\theta^r(I_q(\E D^q_{\bullet}e^{{\rm i}\theta X_u}))\right\|_{L^p(\Omega)}\lesssim(1+|\theta|)^k\Psi_{\alpha,u}^{\otimes k}(\vec{x}),
    \end{equation*}
    where the proportionality constant is independent of $\theta\in\RR$, $\vec{x}\in\RR^k$ and $u\in\ZZ$.
\end{lem}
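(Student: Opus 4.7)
The first step is an algebraic reduction. By Lemma~\ref{lem:Dxe},
\[
    \E D^q_{\vec y}\,e^{{\rm i}\theta X_u} = \E[e^{{\rm i}\theta X_u}]\,(e^{{\rm i}\theta\psi_u}-1)^{\otimes q}(\vec y),
\]
and iterating \eqref{DxFchaos} gives the Malliavin identity $D^k_{\vec x}I_q(g) = \tfrac{q!}{(q-k)!}I_{q-k}(g(\vec x,\cdot))$ for symmetric $g$. Combining these with the tensor factorisation $(e^{{\rm i}\theta\psi_u}-1)^{\otimes q}(\vec x,\vec z) = (e^{{\rm i}\theta\psi_u}-1)^{\otimes k}(\vec x)\cdot(e^{{\rm i}\theta\psi_u}-1)^{\otimes(q-k)}(\vec z)$ yields the clean formula
\[
    D^k_{\vec x}\,I_q\bigl(\E D^q_\bullet e^{{\rm i}\theta X_u}\bigr) = q!\,(e^{{\rm i}\theta\psi_u}-1)^{\otimes k}(\vec x)\cdot \pP_{q-k}\,e^{{\rm i}\theta X_u},
\]
where $\pP_{q-k}e^{{\rm i}\theta X_u} = \tfrac{\E e^{{\rm i}\theta X_u}}{(q-k)!}\,I_{q-k}\bigl((e^{{\rm i}\theta\psi_u}-1)^{\otimes(q-k)}\bigr)$ is the projection of $e^{{\rm i}\theta X_u}$ onto the $(q-k)$-th chaos. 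Since $\d_\theta^r$ commutes with $D^k_{\vec x}$ and the $\vec x$-dependent prefactor is deterministic, Leibniz reduces the proof to bounding, for each $r_1+r_2=r$, the product $|\d_\theta^{r_1}(e^{{\rm i}\theta\psi_u}-1)^{\otimes k}(\vec x)|\cdot\|\d_\theta^{r_2}\pP_{q-k}e^{{\rm i}\theta X_u}\|_{L^p(\Omega)}$.

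For the deterministic $\vec x$-factor I would mimic the proof of Lemma~\ref{Dxeithetalem}: a further Leibniz expansion distributes the $r_1$ derivatives among the $k$ factors, and each resulting factor is bounded pointwise by $C(1+|\theta|)|\psi_u(x_j)|$, using $|e^{{\rm i}\theta\psi_u(x_j)}-1|\le(1+|\theta|)|\psi_u(x_j)|$ for undifferentiated factors and $|({\rm i}\psi_u(x_j))^m e^{{\rm i}\theta\psi_u(x_j)}|\le\|\psi\|_{L^\infty}^{m-1}|\psi_u(x_j)|$ for differentiated ones. Since $|\psi_u(x)|\lesssim\Psi_{\alpha,u}(x)$ under assumption~\eqref{as:psi}, this factor is bounded by $C(1+|\theta|)^k\Psi_{\alpha,u}^{\otimes k}(\vec x)$.

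The main obstacle is the uniform-in-$\theta$ estimate $\|\d_\theta^{r_2}\pP_{q-k}e^{{\rm i}\theta X_u}\|_{L^p(\Omega)}\lesssim 1$: a naive application of Corollary~\ref{cor:I_k_Lp} to $I_{q-k}((e^{{\rm i}\theta\psi_u}-1)^{\otimes(q-k)})$ would yield $(1+|\theta|)^{(q-k)/(2\alpha)}$ growth coming from the $L^p\cap L^2$ norms of $e^{{\rm i}\theta\psi_u}-1$, which would destroy the claimed bound. The essential cancellation is the characteristic-function identity
\[
    |\E e^{{\rm i}\theta X_u}| = \exp\bigl(-\tfrac{1}{2}\|e^{{\rm i}\theta\psi_u}-1\|_{L^2(\RR)}^2\bigr) = e^{-a/2}, \qquad a := \|e^{{\rm i}\theta\psi_u}-1\|_{L^2(\RR)}^2,
\]
which decays faster than any polynomial in $\sqrt a$. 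Concretely, I would expand $\d_\theta^{r_2}$ on the product $\E e^{{\rm i}\theta X_u}\cdot I_{q-k}((e^{{\rm i}\theta\psi_u}-1)^{\otimes(q-k)})$ via Leibniz and bound each piece. The scalar $|\d_\theta^{s_0}\E e^{{\rm i}\theta X_u}|$ is handled by writing $\E e^{{\rm i}\theta X_u}=\exp(C(\theta))$ with $C(\theta)=\int(e^{{\rm i}\theta\psi_u(y)}-1-{\rm i}\theta\psi_u(y))\,dy$, applying the Fa\`a di Bruno formula, and using Cauchy--Schwarz $|C'(\theta)|\le\|\psi\|_{L^2}\sqrt a$ together with $|C^{(m)}(\theta)|\le\|\psi\|_{L^m}^m$ for $m\ge 2$; this yields $|\d_\theta^{s_0}\E e^{{\rm i}\theta X_u}|\lesssim(1+\sqrt a)^{s_0}e^{-a/2}$. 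The $L^p(\Omega)$ norm of the Wiener--It\^o factor is estimated via Corollary~\ref{cor:I_k_Lp}: each undifferentiated $(e^{{\rm i}\theta\psi_u}-1)$ contributes $\|e^{{\rm i}\theta\psi_u}-1\|_{L^p(\RR)\cap L^2(\RR)}\lesssim 1+\sqrt a$ (from $\|\cdot\|_{L^2}^2=a$ and $\|\cdot\|_{L^p}^p\le 2^{p-2}a$ since $|e^{{\rm i}\theta\psi_u}-1|\le 2$), while each differentiated factor $({\rm i}\psi_u)^m e^{{\rm i}\theta\psi_u}$ contributes the constant $\|\psi\|_{L^p(\RR)\cap L^2(\RR)}^m$. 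The resulting product is bounded by $C(1+\sqrt a)^Ne^{-a/2}$ with $N=N(r_2,q,k)$, uniformly bounded over $a\ge 0$. Combining with the $\vec x$-factor estimate yields the claim with a constant depending only on $p,q,k,r$ and $L^m$-norms of $\psi$, hence independent of $\theta,\vec x,u$.
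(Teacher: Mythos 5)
Your proposal is correct and follows essentially the same route as the paper: the same chaos-expansion identity $D^k_{\vec x}I_q(g)=\tfrac{q!}{(q-k)!}I_{q-k}(g(\vec x,\cdot))$ combined with Lemma~\ref{lem:Dxe}, the same application of Corollary~\ref{cor:I_k_Lp} to the remaining Wiener--It\^o factor, and the same key cancellation between the polynomial growth in $a=\|e^{{\rm i}\theta\psi_u}-1\|_{L^2}^2$ (the paper's $2\int(1-\cos(\theta\psi_u))\,dx$) and the decay $|\E e^{{\rm i}\theta X_u}|=e^{-a/2}$ via Fa\`a di Bruno. Your bookkeeping of the Fa\`a di Bruno terms as $(1+\sqrt a)^{s_0}e^{-a/2}$ is in fact slightly more careful than the paper's, but the argument is the same.
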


\begin{proof}
    By~\eqref{DxFchaos} and  applying the $r$-th derivative with respect to $\theta$ in Lemma~\ref{lem:Dxe}, we have
        \begin{equation*}
            \begin{aligned}
                D^k_{\vec{x}} \d_\theta^r(I_q(\E D^q_{\bullet}e^{{\rm i}\theta X_u}))
                &=\frac{q!}{(q-k)!}I_{q-k}(\d_\theta^r\E D^{q}_{\vec{x},\bullet} e^{{\rm i}\theta X_u})\\
                &=\frac{q!}{(q-k)!}I_{q-k}\left(\d_\theta^r\left(\E e^{{\rm i}\theta X_u} \left(e^{{\rm i}\theta \psi_u}-1\right)^{\otimes (q-k)}(\cdot)\left(e^{{\rm i}\theta \psi_u}-1\right)^{\otimes k}(\vec{x})\right)\right)
            \end{aligned}
        \end{equation*}
    The right hand side expands to finite terms of the form
        \begin{equation*}
            (\d_\theta^{r_0}\E e^{{\rm i}\theta X_u}) \cdot I_{q-k}\big(\otimes_{j=k+1}^q \big(\d_\theta^{r_j} \big( e^{{\rm i}\theta \psi_u}-1)\big)\big)\cdot \prod_{j=1}^k \d_\theta^{r_j} \big( e^{{\rm i}\theta \psi_u(x_j)}-1 \big),
        \end{equation*}
    where $r_j\in\NN$ and $\sum_{j=0}^{q}r_j=r$. 
    By Corollary~\ref{cor:I_k_Lp}, the $L^p(\Omega)$ norm of the above term is bounded by
        \begin{equation}\label{deriLp}
            |\d_\theta^{r_0}\E e^{{\rm i}\theta X_u}| \cdot \prod_{j=k+1}^q \|\d_\theta^{r_j} \big( e^{{\rm i}\theta \psi_u}-1)\|_{L^p(\RR)\cap L^2(\RR)}\cdot \prod_{j=1}^k |\d_\theta^{r_j} \big( e^{{\rm i}\theta \psi_u(x_j)}-1 \big)|.
        \end{equation}
    By~\cite[Lemma~3.2]{KWX24}, we have
        \begin{equation*}
            \E e^{{\rm i}\theta X_u}=\exp\left({\int_{\RR}(e^{\rm{i}\theta \psi_u}-{\rm{i}}\theta \psi_u-1)dx}\right).
        \end{equation*}
    Applying the $r_0$-th derivative, every term in Faà di Bruno formula is of the form
        \begin{equation}\label{derivativechf}
            \prod_{b=1}^{r_0}\left({\int_{\RR}\d_\theta^b(e^{{\rm{i}}\theta \psi_u}-{\rm{i}}\theta \psi_u-1)dx}\right)^{m_b}\cdot\exp\left({\int_{\RR}(e^{{\rm{i}} \theta \psi_u}-{\rm{i}}\theta \psi_u-1)dx}\right)
        \end{equation}
    with $m_b\in\NN$ and $\sum_{b=1}^{r_0} bm_b=r_0$. Next we control every term in~\eqref{derivativechf}. We have
        \begin{equation*}
            \begin{aligned}
                &\left|\exp\left({\int_{\RR}(e^{{\rm{i}} \theta \psi_u}-{\rm{i}}\theta \psi_u-1)dx}\right)\right|=\exp\left(-\int_{\RR}(1-\cos(\theta \psi_u))dx\right).
            \end{aligned}
        \end{equation*}
    By Cauchy inequality and $\psi_u\in L^p(\RR)$, $p \ge2$, we have
        \begin{equation*}
            \begin{aligned}
                \left|\int_{\RR}\d^b_\theta(e^{{\rm{i}}\theta \psi_u}-{\rm{i}}\theta \psi_u-1)dx\right|&\lesssim 
                \int_{\RR} (2(1- \cos( \theta \psi_u)))^{\frac 12}|\psi_u|dx \cdot \1_{b=1}+ \int_{\RR}|\psi_u|^b dx \cdot \1_{b\ge2}\\
                &\lesssim \int_{\RR}(1-\cos(\theta \psi_u))dx \cdot \1_{b=1}+  \1_{b\ge2}.
            \end{aligned}
        \end{equation*}
    Then~\eqref{derivativechf} is bounded by 
        \begin{equation*}
            \left(\int_{\RR}(1-\cos(\theta \psi_u))dx\right)^{r_0}\exp\left(-\int_{\RR}(1-\cos(\theta \psi_u))dx\right).
        \end{equation*}
    For $p\ge2$, we have
        \begin{equation*}
            \begin{aligned}
                \|\d^{r_j}_\theta \big( e^{{\rm i}\theta \psi_u}-1)\|_{L^p(\RR)}&= \left(\int_{\RR}(2(1-\cos(\theta \psi_u)))^{\frac p2}dx\right)^{\frac 1p} \1_{r_j=0}+\| \psi_u^{r_j}\|_{L^p(\RR)}\1_{r_j=1}\\
                & \lesssim \left(\int_{\RR}(1-\cos(\theta \psi_u))dx\right)^{\frac 1p} \1_{r_j=0}+ \1_{r_j=1}.
            \end{aligned} 
        \end{equation*}
    Combining with the inequality $|\d_\theta^{r_j} \big( e^{{\rm i}\theta \psi_u(x_j)}-1 \big)|\lesssim (1+|\theta|) \Psi_{\alpha,u}(x_j)$, we can control~\eqref{deriLp} by
        \begin{equation*}
            \begin{aligned}
                &\left(\int_{\RR}(1-\cos(\theta \psi_u))dx\right)^{K}\exp\left(-\int_{\RR}(1-\cos(\theta \psi_u))dx\right)(1+|\theta|)^k\Psi_{\alpha,u}^{\otimes k}(\vec{x})\\
                &\lesssim(1+|\theta|)^k\Psi_{\alpha,u}^{\otimes k}(\vec{x})
            \end{aligned}
        \end{equation*}
    for some $K>0$. This conpletes our proof.
\end{proof}
\begin{cor}\label{DALp}
    For every $k\le d$, every $r\in\NN$ and every $p\ge2$, we have
        \begin{equation*}
            \|D^{k}_{\vec{x}}\d_\theta^r(\tT^{\ge d} e^{{\rm i}\theta X_u})\|_{L^{p}(\Omega)}\lesssim (1+|\theta|)^k\Psi_{\alpha,u}^{\otimes k}(\vec{x}), 
        \end{equation*}
    where the proportionality constant is independent of $\theta\in\RR$, $\vec{x}\in\RR^k$ and $u\in\ZZ$.
\end{cor}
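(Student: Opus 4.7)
The plan is to reduce the truncated object $\tT^{\ge d} e^{{\rm i}\theta X_u}$ to the two kinds of objects already controlled in Lemmas~\ref{Dxeithetalem} and~\ref{DxeIithetalem}, namely the full exponential $e^{{\rm i}\theta X_u}$ and its finite-chaos truncations $I_q(\E D^q_\bullet e^{{\rm i}\theta X_u})$. Since the chaos expansion has kernels $f_q = \frac{1}{q!}\E D^q_{\bullet} F$, the natural decomposition to use is
\begin{equation*}
    \tT^{\ge d} e^{{\rm i}\theta X_u} = e^{{\rm i}\theta X_u} - \sum_{q=0}^{d-1}\frac{1}{q!}\, I_q\!\left(\E D^q_{\bullet} e^{{\rm i}\theta X_u}\right).
\end{equation*}
Applying $D^k_{\vec{x}}\partial_\theta^r$ to both sides and using the triangle inequality in $L^p(\Omega)$ then reduces the corollary to bounding each summand.

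For the first term I would invoke Lemma~\ref{Dxeithetalem} directly, giving the bound $C_{p,r}^{\,k}(1+|\theta|)^k\Psi_{\alpha,u}^{\otimes k}(\vec{x})$. For the summands indexed by $q\in\{0,\dots,d-1\}$, the key observation is that $D_x I_q(g) = q\, I_{q-1}(g(x,\cdot))$, so iterating $k$ times yields $D^k_{\vec{x}} I_q = \frac{q!}{(q-k)!} I_{q-k}(g(\vec{x},\cdot))$ when $q\ge k$ and $0$ when $q<k$. Thus only indices $q$ with $k\le q\le d-1$ contribute, and each of these is exactly the object bounded in Lemma~\ref{DxeIithetalem}, yielding $(1+|\theta|)^k\Psi_{\alpha,u}^{\otimes k}(\vec{x})$ up to a constant depending on $p$, $r$, $d$ only. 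Summing the at most $d$ surviving terms preserves the same bound.

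The only case worth flagging is $k=d$: then every term in the finite sum with $q\le d-1$ vanishes after $D^k_{\vec{x}}$, so the estimate is controlled entirely by the $e^{{\rm i}\theta X_u}$ contribution handled by Lemma~\ref{Dxeithetalem}. In all cases $k\le d$, combining the two bounds yields the claim with a constant independent of $\theta\in\RR$, $\vec{x}\in\RR^k$, and $u\in\ZZ$. I do not expect any real obstacle here — the work was done in Lemmas~\ref{Dxeithetalem} and~\ref{DxeIithetalem}, and the corollary is essentially their combination via the identity $\tT^{\ge d} = \mathrm{Id} - \tT^{\le d-1}$.
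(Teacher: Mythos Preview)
Your proposal is correct and essentially identical to the paper's own proof: both write $\tT^{\ge d} e^{{\rm i}\theta X_u} = e^{{\rm i}\theta X_u} - \sum_{q=0}^{d-1}\frac{1}{q!} I_q(\E D^q_{\bullet} e^{{\rm i}\theta X_u})$, observe via \eqref{DxFchaos} that $D^k_{\vec{x}}$ annihilates the terms with $q<k$, and then apply Lemmas~\ref{Dxeithetalem} and~\ref{DxeIithetalem} to the remaining pieces.
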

\begin{proof}
    It follows from \eqref{DxFchaos} that $D^{k}_{\vec{x}}I_q(\E D^q_{\bullet}e^{{\rm i}\theta X_u}))=0\; (0\le q\le k-1)$. Then we have
        \begin{equation*}
            \begin{aligned}
                D^{k}_{\vec{x}}\d_\theta^r(\tT^{\ge d} e^{{\rm i}\theta X_u})=&D^k_{\vec{x}} \d_\theta^r(e^{{\rm i}\theta X_u})-\sum_{q=0}^{d-1}\frac{1}{q!}D^k_{\vec{x}} \d_\theta^r(I_q(\E D_{\bullet}e^{{\rm i}\theta X_u}))\\
                =&D^k_{\vec{x}} \d_\theta^r(e^{{\rm i}\theta X_u})-\sum_{q=k}^{d-1}\frac{1}{q!}D^k_{\vec{x}} \d_\theta^r(I_q(\E D_{\bullet}e^{{\rm i}\theta X_u})).
            \end{aligned}
        \end{equation*}
    Then our conclusion follows from Lemmas~\ref{Dxeithetalem} and ~\ref{DxeIithetalem}.
\end{proof}

The following two lemmas provide estimates for $\Psi_{\alpha,u}$.

\begin{lem} \label{Psiestimate}
    For every $\gamma \in (\frac{1}{2},1)\cup(1,\infty)$, we have the estimates
    \begin{equation}\label{twoPsiLinfty}
        \|\Psi_{\gamma,i}\Psi_{\gamma,j}\|_{L^\infty(\mathbb R)} \le(1+|i-j|)^{-\gamma},
    \end{equation}
    \begin{equation}\label{twoPsiL1}
        (1+|i-j|)^{(1-2\gamma)\vee(-\gamma)} \lesssim\|\Psi_{\gamma,i}\Psi_{\gamma,j}\|_{L^1(\mathbb R)} \lesssim (1+|i-j|)^{(1-2\gamma)\vee(-\gamma)} ,
    \end{equation}  
    and
    \begin{equation}\label{threePsiL1}
        \begin{aligned}
            \|\Psi_{\gamma,i}\Psi_{\gamma,j}\Psi_{\gamma,k}\|_{L^1(\mathbb R)} &\lesssim \|\Psi_{\gamma,i}\Psi_{\gamma,j}\|_{L^1(\mathbb R)} \|\Psi_{\gamma,i}\Psi_{\gamma,k}\|_{L^1(\mathbb R)} +\|\Psi_{\gamma,i}\Psi_{\gamma,j}\|_{L^1(\mathbb R)}\|\Psi_{\gamma,j}\Psi_{\gamma,k}\|_{L^1(\mathbb R)}\\& + \|\Psi_{\gamma,i}\Psi_{\gamma,k}\|_{L^1(\mathbb R)}\|\Psi_{\gamma,j}\Psi_{\gamma,k}\|_{L^1(\mathbb R)},
        \end{aligned}
    \end{equation}
    where the proportionality constants are independent of $i,j,k\in\ZZ$.
\end{lem}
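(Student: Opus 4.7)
For~\eqref{twoPsiLinfty}, I would argue pointwise. The triangle inequality gives $|x-i|+|x-j|\geq |i-j|$ for every $x\in\RR$, hence
\begin{equation*}
    (1+|x-i|)(1+|x-j|)\geq 1+|x-i|+|x-j|\geq 1+|i-j|.
\end{equation*}
Raising to the power $-\gamma$ yields $\Psi_{\gamma,i}(x)\Psi_{\gamma,j}(x)\leq (1+|i-j|)^{-\gamma}$, and the $L^\infty$ bound follows.

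For the two-point $L^1$ estimate~\eqref{twoPsiL1}, after translation I may assume $i=0$ and $j=n\geq 0$. For the upper bound I would split $\RR$ into the two near zones $\{|x|\leq n/2\}$ and $\{|x-n|\leq n/2\}$, and the tail zone $\{x<-n/2\}\cup\{x>3n/2\}$. On each near zone one factor is of size $(1+n)^{-\gamma}$ and the other integrates to $O(1)$ when $\gamma>1$ and to $O(n^{1-\gamma})$ when $\gamma<1$, contributing $(1+n)^{-\gamma}$ or $(1+n)^{1-2\gamma}$ respectively. On the tail zone both factors are comparable to $|x|^{-\gamma}$, so the contribution is $\int_{n/2}^{\infty} x^{-2\gamma}\,dx\asymp (1+n)^{1-2\gamma}$, which is finite thanks to $\gamma>\tfrac12$. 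Taking the larger of the two exponents gives the upper bound $(1+n)^{(1-2\gamma)\vee(-\gamma)}$. The matching lower bound comes from restricting the integral to a unit interval around $i$, producing $\gtrsim (1+n)^{-\gamma}$, and when $\gamma<1$ also to the annulus $\{n\leq x\leq 2n\}$, on which both factors are $\asymp (1+n)^{-\gamma}$, yielding $\gtrsim (1+n)^{1-2\gamma}$.

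For~\eqref{threePsiL1}, I would partition $\RR$ into three Voronoi-type regions
\begin{equation*}
    A_p=\{x\in\RR : |x-p|\leq |x-q|\text{ for every }q\in\{i,j,k\}\},\qquad p\in\{i,j,k\}.
\end{equation*}
On $A_i$, the triangle inequality forces $|x-j|\geq |i-j|/2$ and $|x-k|\geq |i-k|/2$, so the pointwise bound from~\eqref{twoPsiLinfty} gives $\Psi_{\gamma,k}(x)\lesssim (1+|i-k|)^{-\gamma}$ on $A_i$, whence
\begin{equation*}
    \int_{A_i}\Psi_{\gamma,i}\Psi_{\gamma,j}\Psi_{\gamma,k}\,dx\lesssim (1+|i-k|)^{-\gamma}\,\|\Psi_{\gamma,i}\Psi_{\gamma,j}\|_{L^1(\RR)}.
\end{equation*}
Since $\gamma>\tfrac12$, the estimate~\eqref{twoPsiL1} implies $(1+|i-k|)^{-\gamma}\lesssim \|\Psi_{\gamma,i}\Psi_{\gamma,k}\|_{L^1(\RR)}$ in both regimes $\gamma\in(\tfrac12,1)$ and $\gamma>1$; indeed, this reduces to $-\gamma\leq (1-2\gamma)\vee(-\gamma)$, which always holds. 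The contribution from $A_i$ is therefore bounded by $\|\Psi_{\gamma,i}\Psi_{\gamma,j}\|_{L^1}\|\Psi_{\gamma,i}\Psi_{\gamma,k}\|_{L^1}$, and symmetric treatment of $A_j$ and $A_k$ produces the remaining two terms on the right-hand side of~\eqref{threePsiL1}.

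The main bookkeeping obstacle is the case analysis in~\eqref{twoPsiL1}: the dominant contribution switches from the tail region to the near region as $\gamma$ crosses $1$, which is exactly what is encoded by the exponent $(1-2\gamma)\vee(-\gamma)$ and which motivates excluding $\gamma=1$ as in Remark~\ref{rmk:alpha=1}. Once this two-point estimate is in hand, the three-point inequality is essentially formal, following from the Voronoi decomposition together with the uniform comparison $(1+|p-q|)^{-\gamma}\lesssim \|\Psi_{\gamma,p}\Psi_{\gamma,q}\|_{L^1}$.
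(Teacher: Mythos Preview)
Your proof is correct and follows essentially the same strategy as the paper. The paper only writes out the argument for~\eqref{threePsiL1}, assuming $0=i\le j\le k$ and splitting $\RR$ at the midpoints $j/2$ and $(j+k)/2$; this is exactly your Voronoi decomposition after ordering the points. The only minor difference is on the middle region: the paper bounds \emph{both} far factors pointwise and integrates the remaining single $\Psi$ over the interval, whereas you bound one far factor and invoke the lower bound in~\eqref{twoPsiL1} to absorb it into an $L^1$ norm --- your route is slightly cleaner since it avoids redoing the $\gamma\lessgtr 1$ case split. One small imprecision: on $[n,2n]$ the factor $\Psi_{\gamma,n}$ is not uniformly $\asymp(1+n)^{-\gamma}$ (it equals $1$ at $x=n$), but since $\Psi_{\gamma,n}\ge(1+n)^{-\gamma}$ there the lower bound $\gtrsim(1+n)^{1-2\gamma}$ still follows immediately.
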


\begin{proof}
    
    We provide only the proof of~\eqref{threePsiL1}. Without loss of generality, we assume $0 = i \le j \le k$ and divide the integration domain into intervals $(-\infty,\frac{j}{2}),[\frac{j}{2},\frac{j+k}{2}),[\frac{j+k}{2},+\infty)$. For simplicity, we only provide the detail for the intervals $(-\infty,\frac{j}{2})$ and $[\frac {j}{2},\frac{j+k}{2})$. For the interval $(-\infty,\frac{j}{2})$, we bound $\Psi_{\gamma,k}$ by $(1+k-j)^{-\gamma}$, and then by~\eqref{twoPsiL1} we get
    \begin{equation*}
        \begin{aligned}
            \int_{-\infty}^{\frac{j}{2}}(\Psi_{\gamma,0}\Psi_{\gamma,j}\Psi_{\gamma,k})(x)dx
            &\lesssim (1+k-j)^{-\gamma}\|\Psi_{\gamma,0}\Psi_{\gamma,j}\|_{L^1(\mathbb R)}  \\
            &\lesssim (1+k-j)^{-\gamma}(1+j)^{(1-2\gamma)\vee(-\gamma)}\\
            &\lesssim \left((1+j)(1+k-j)\right)^{(1-2\gamma)\vee(-\gamma)}.
        \end{aligned}
    \end{equation*}
    
    For the interval $[\frac {j}{2},\frac{j+k}{2})$, we bound $\Psi_{\gamma,0}$ and $\Psi_{\gamma,k}$ by $(1+j)^{-\gamma}$ and $(1+k-j)^{-\gamma}$ respectively, and then we get
    \begin{equation*}
        \begin{aligned}
            \int_{\frac {j}{2}}^{\frac{j+k}{2}}(\Psi_{\gamma,0}\Psi_{\gamma,j}\Psi_{\gamma,k})(x)dx
            &\lesssim (1+k-j)^{-\gamma}(1+j)^{-\gamma}\int_{\frac {j}{2}}^{\frac{j+k}{2}}\frac{1}{(1+|x-j|)^{\gamma}}dx \\
            &\lesssim (1+k-j)^{-\gamma}(1+j)^{-\gamma}\left((1+k-j)^{(1-\gamma)\vee0}+(1+j)^{(1-\gamma)\vee0}\right) \\
            &\lesssim \left((1+j)(1+k-j)\right)^{(1-2\gamma)\vee(-\gamma)}.
        \end{aligned}
    \end{equation*}
    Similar estimate can be obtained on the interval $[\frac{j+k}{2},+\infty)$. Therefore, the desired result follows from \eqref{twoPsiL1}.
\end{proof}

\begin{lem}\label{tightnessestamate}
    For every $p>4$, $v=0,1,\dots,d_\alpha$, we have
	\begin{equation*}
		\bigg\|\bigg\| \sum_{u=[ns]}^{[nt] -1} \Psi_{\alpha,u }^{\otimes d_\alpha} (\vec{x}) \bigg\|_{L^{p}(d\vec{y_v})} \bigg\|_{L^2(d\vec{z_v})} \lesssim \sqrt{[nt]-[ns]},
	\end{equation*}
	where $\vec{x}=(x_1,\dots,x_{d_\alpha})$, $\vec{y}_v = (x_1,\dots,x_v)$, and $\vec{z}_v=(x_{v+1},\dots,x_{d_\alpha})$. Furthermore, the proportionality constant is independent of $s,t\in[0,1]$ and $n$.
\end{lem}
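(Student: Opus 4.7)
The plan is to sidestep the fact that Minkowski's integral inequality goes the wrong way here: for $p\ge 2$ it only yields $\|\,\|\cdot\|_{L^{2}(d\vec{z}_v)}\|_{L^{p}(d\vec{y}_v)}\le\|\,\|\cdot\|_{L^{p}(d\vec{y}_v)}\|_{L^{2}(d\vec{z}_v)}$, which controls our quantity from below, not above. Write $N:=[nt]-[ns]$ and $f(\vec{x}):=\sum_{u=[ns]}^{[nt]-1}\Psi_{\alpha,u}^{\otimes d_\alpha}(\vec{x})$. The key observation is the identity $\|f\|_{L^{p}(d\vec{y}_v)}^{2}=\|f^{2}\|_{L^{p/2}(d\vec{y}_v)}$, which converts the problematic mixed norm into an ordinary double integral. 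Since $f\ge 0$, expanding the square and factorising the tensor product gives
\begin{equation*}
f^{2}=\sum_{u,u'}\bigl(\Psi_{\alpha,u}\Psi_{\alpha,u'}\bigr)^{\otimes v}(\vec{y}_v)\cdot\bigl(\Psi_{\alpha,u}\Psi_{\alpha,u'}\bigr)^{\otimes(d_\alpha-v)}(\vec{z}_v);
\end{equation*}
applying the triangle inequality in $L^{p/2}(d\vec{y}_v)$ and then integrating against $d\vec{z}_v$ (the $L^{1}$ side of $\|\cdot\|_{L^{2}}^{2}$), I would obtain
\begin{equation*}
\Bigl\|\|f\|_{L^{p}(d\vec{y}_v)}\Bigr\|_{L^{2}(d\vec{z}_v)}^{2}\lesssim\sum_{u,u'}\|\Psi_{\alpha,u}\Psi_{\alpha,u'}\|_{L^{p/2}(\RR)}^{v}\,\|\Psi_{\alpha,u}\Psi_{\alpha,u'}\|_{L^{1}(\RR)}^{d_\alpha-v}.
\end{equation*}

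Next I would bound each of the two factors by a power of $1+|u-u'|$. The $L^{1}$-factor is immediate from \eqref{twoPsiL1}, producing decay $(1+|u-u'|)^{\beta}$ with $\beta:=(1-2\alpha)\vee(-\alpha)$. For the $L^{p/2}$-factor, I would use the pointwise observation that for every $y$ at least one of $|y-u|$ and $|y-u'|$ exceeds $|u-u'|/2$, which yields $\Psi_{\alpha,u}(y)\Psi_{\alpha,u'}(y)\lesssim(1+|u-u'|)^{-\alpha}\bigl(\Psi_{\alpha,u}(y)+\Psi_{\alpha,u'}(y)\bigr)$; the finiteness $\|\Psi_\alpha\|_{L^{p/2}(\RR)}<\infty$, guaranteed by $p>4$ together with $\alpha>1/2$ (so that $p\alpha/2>1$), then gives $\|\Psi_{\alpha,u}\Psi_{\alpha,u'}\|_{L^{p/2}}\lesssim(1+|u-u'|)^{-\alpha}$. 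Substituting into the display reduces the lemma to proving
\begin{equation*}
\sum_{u,u'\in\{[ns],\dots,[nt]-1\}}(1+|u-u'|)^{E(v)}\lesssim N,\qquad E(v):=-\alpha v+\beta(d_\alpha-v).
\end{equation*}

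The main (and essentially only) obstacle is the arithmetic check that $E(v)<-1$ uniformly for $v\in\{0,1,\dots,d_\alpha\}$; once this holds, the double sum is bounded by $N$ times a convergent one-sided series, and the lemma follows upon taking square roots. For $\alpha\in(\tfrac12,1)$ one has $\beta=1-2\alpha$ and $E(v)=(1-2\alpha)d_\alpha+(\alpha-1)v$, a decreasing function of $v$ since $\alpha<1$; its maximum at $v=0$ equals $(1-2\alpha)d_\alpha<-1$ by the very definition $d_\alpha=[1/(2\alpha-1)]+1$. For $\alpha>1$ one has $d_\alpha=1$ and $\beta=-\alpha$, so $E(v)\equiv -\alpha<-1$ on $\{0,1\}$. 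In both regimes $E(v)<-1$, yielding $\lesssim\sqrt{[nt]-[ns]}$ with a constant manifestly independent of $s,t\in[0,1]$ and $n$, as required.
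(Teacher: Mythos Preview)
Your proof is correct and follows essentially the same route as the paper: square the mixed norm to convert it into an $L^{p/2}(d\vec{y}_v)\otimes L^{1}(d\vec{z}_v)$ expression, apply the triangle inequality in $L^{p/2}$ to the expanded double sum, and then verify that the resulting exponent $-\alpha v+\bigl((1-2\alpha)\vee(-\alpha)\bigr)(d_\alpha-v)$ is strictly below $-1$ for all $v$. The only cosmetic difference is that the paper estimates $\|\Psi_{\alpha,u}\Psi_{\alpha,u'}\|_{L^{p/2}}$ by rewriting it as $\langle\Psi_{\alpha p/2,0},\Psi_{\alpha p/2,u'-u}\rangle^{2/p}$ and invoking \eqref{twoPsiL1} with $\gamma=\alpha p/2>1$, whereas you obtain the same $(1+|u-u'|)^{-\alpha}$ decay via the elementary pointwise splitting; both yield the identical exponent.
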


\begin{proof}
    By Minkowski inequality, we have
    \begin{equation} \label{e:mix_bound}
        \begin{split}
            \bigg\|\bigg\| \sum_{u=[ns]}^{[nt] -1}\Psi_{\alpha,u }^{\otimes d_\alpha} &(\vec{x}) \bigg\|_{L^{p}(d\vec{y_q})}\bigg\|_{L^2(d\vec{z_q})}^2 = \bigg\| \bigg\| \sum_{u_1,u_2=[ns]}^{[nt] -1}\Psi_{\alpha,u_1 }^{\otimes d_\alpha}(\vec{x})\Psi_{\alpha,u_2 }^{\otimes d_\alpha}(\vec{x})\bigg\|_{L^{\frac{p}{2}}(d\vec{y_q})}\bigg\|_{L^1(d\vec{z_q})}\\
            &\leq\sum_{u_1,u_2=[ns]}^{[nt] -1}\|\Psi_{\alpha,u_1 }\Psi_{\alpha,u_2 }\|^q_{L^\frac{p}{2}} \|\Psi_{\alpha,u_1 }\Psi_{\alpha,u_2 }\|_{L^1}^{d_\alpha-q}\\
            &\lesssim ([nt]-[ns]) \sum_{u=0}^{\infty}  \langle \Psi_{\frac{\alpha p}{2},0},\Psi_{\frac{\alpha p}{2},u}\rangle ^{\frac{2q}{p}} \langle \Psi_{\alpha,0 }, \Psi_{\alpha,u }\rangle^{d_\alpha-q}.
        \end{split}
    \end{equation}
    By~\eqref{twoPsiL1}, we have
    \begin{equation} \label{e:theta_estimate}
        \begin{aligned}
            &\sum_{u=0}^{\infty}  \langle \Psi_{\frac{\alpha p}{2},0},\Psi_{\frac{\alpha p}{2},u}\rangle ^{\frac{2q}{p}} \langle \Psi_{\alpha,0 }, \Psi_{\alpha,u } \rangle^{d_\alpha - q}\\
            \lesssim& \sum_{u=0}^{\infty}  (1+u)^{-\alpha q + ((1-2\alpha)\vee(-\alpha)) (d_\alpha - q )} <\infty,
        \end{aligned}
    \end{equation}
    where the last inequality follows from 
    \begin{equation*}
        -\alpha q + (d_\alpha-q)((1-2\alpha)\vee(-\alpha))\leq d_\alpha((1-2\alpha)\vee(-\alpha))< -1.
    \end{equation*}
    Substituting \eqref{e:theta_estimate} into \eqref{e:mix_bound} completes the proof.
     
\end{proof}

    With the above lemmas, we are ready to prove Theorem~\ref{thm:tightness}.

\begin{proof}[Proof of Theorem~\ref{thm:tightness}]
    By Lemma~\ref{TdphiX}, we have
    \begin{equation*}
        \mathcal{T}^{\ge d} Y_n(t)- \mathcal{T}^{\ge d} Y_n(s)=\frac{1}{\sqrt{n}} \sum_{u=[ns] }^{[nt] - 1} \mathcal{T}^{\ge d}\phi(X_u) = \bigg\langle \widehat{\phi},\frac{1}{\sqrt{n}} \sum_{u=[ns] }^{[nt] - 1} \tT^{\ge d} e^{{\rm i}\theta X_u}\bigg\rangle.
    \end{equation*}
    Combining Lemmas~\ref{lem:local_decompose},~\ref{phihatdecay} and \ref{lem:exchange}, we get
    \begin{equation} \label{e:Yn_fourier}
        \|\mathcal{T}^{\ge d} Y_n(t)- \mathcal{T}^{\ge d} Y_n(s)\|_{L^{p}(\Omega)} \lesssim \sum_{K\in\ZZ} (1+|K|)^{-\gamma_0}\sup_{0\le r \leq M_0+3} \sup_{\theta\in\fR_{K}}\bigg\| \frac{1}{\sqrt{n}} \sum_{u=[ns] }^{[nt] - 1} \d^r_\theta(\tT^{\ge d} e^{{\rm i}\theta X_u})\bigg\|_{L^p(\Omega)}.
    \end{equation}
    By Corollary~\ref{cor:multi_sg_ineq} and Minkowski inequality, we have
    \begin{equation} \label{e:d_malliavin}
        \begin{aligned}
             &\bigg\| \frac{1}{\sqrt{n}} \sum_{u=[ns] }^{[nt] - 1} \d^r_\theta(\tT^{\ge d} e^{{\rm i}\theta X_u})\bigg\|_{L^p(\Omega)}\\
            \lesssim &\frac{1}{\sqrt{n}}\sum_{v=0}^{d_\alpha} \bigg\|\bigg\|\sum_{u=[ns]}^{[nt] -1} \|D^{d_\alpha}_{\vec{x}}\d_\theta^r(\tT^{\ge d} e^{{\rm i}\theta X_u})\|_{L^{p}(\Omega)}\bigg\|_{L^{p}(d\vec{y}_v)}\bigg\|_{L^2(d\vec{z}_v)},
        \end{aligned}
    \end{equation}
    where $\vec{x}=(x_1,\dots,x_{d_\alpha})$, $\vec{y}_v = (x_1,\dots,x_v)$, and $\vec{z}_v=(x_{v+1},\dots,x_{d_\alpha})$. 
    By Corollary~\ref{DALp}, we have
    \begin{equation} \label{e:D^d_A_bound}
        \sup_{0\le r \leq M_0+3} \|D^{d_\alpha}_{\vec{x}} \d_\theta^r(\tT^{\ge d} e^{{\rm i}\theta X_u})\|_{L^{p}(\Omega)}\lesssim (1+|\theta|)^{d_\alpha}\Psi_{\alpha,u}^{\otimes d_\alpha}(\vec{x}).
    \end{equation}
    Substituting \eqref{e:D^d_A_bound} into \eqref{e:d_malliavin} and then applying Lemma~\ref{tightnessestamate}, we obtain
    \begin{equation}\label{e:TdLp}
       \bigg\| \frac{1}{\sqrt{n}} \sum_{u=[ns] }^{[nt] - 1} \d^r_\theta(\tT^{\ge d} e^{{\rm i}\theta X_u})\bigg\|_{L^p(\Omega)}\lesssim \sqrt \frac{[nt]-[ns]}{n}(1+|\theta|)^{d_\alpha}.
    \end{equation}
    Combining \eqref{e:Yn_fourier}, \eqref{e:TdLp} and $-\gamma_0+d_\alpha<-1$, we get
    \begin{equation*}
        \|\mathcal{T}^{\ge d} Y_n(t)- \mathcal{T}^{\ge d} Y_n(s)\|_{L^{p}(\Omega)} \lesssim\sqrt \frac{[nt]-[ns]}{n},
    \end{equation*}
    which completes the proof.
\end{proof}

\subsection{Tightness of \TitleEquation{(\mathcal{T}^{\ge d} Z_n)_{n\geq 1}}{(mathcal{T} {ge d} Z n) {n geq 1}}}
\label{sec:tightness_z}
\begin{thm} \label{thm:tightness_z}
    Under the assumptions of Theorem~\ref{thm}, the family $(\mathcal{T}^{\ge d} Z_n)_{n\geq 1}$ is tight in $\mathbf{C}([0,1])$.
\end{thm}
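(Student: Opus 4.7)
The plan is to invoke the Kolmogorov-Chentsov tightness criterion in $\mathbf{C}([0,1])$. Since $\mathcal{T}^{\ge d} Z_n(0) = 0$ and, by construction, $Z_n$ is the continuous piecewise linear interpolant of $Y_n$ at the grid points $\{k/n\}_{0 \le k \le n}$ (so each path $t\mapsto \mathcal{T}^{\ge d} Z_n(t)$ is continuous), it suffices to verify that for some $p > 2$,
\begin{equation*}
    \|\mathcal{T}^{\ge d} Z_n(t) - \mathcal{T}^{\ge d} Z_n(s)\|_{L^p(\Omega)} \lesssim \sqrt{|t-s|}
\end{equation*}
uniformly in $s,t \in [0,1]$ and $n\ge 1$. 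Choosing the same $p > 4$ used in the proof of Theorem~\ref{thm:tightness} yields $\|\cdot\|_{L^p}^p \lesssim |t-s|^{p/2}$ with exponent $p/2 > 1$, which is the classical Chentsov bound.

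For the modulus of continuity, set $k = [ns]$, $l = [nt]$ for $0 \le s \le t \le 1$. In the ``same-cell'' regime $k = l$ one has directly $\mathcal{T}^{\ge d} Z_n(t) - \mathcal{T}^{\ge d} Z_n(s) = \sqrt n\,(t-s)\,\mathcal{T}^{\ge d}\phi(X_k)$, and combining $n(t-s)\le 1$ with the uniform bound $\|\mathcal{T}^{\ge d}\phi(X_0)\|_{L^p(\Omega)} < \infty$ (established below) gives the desired estimate. In the regime $k < l$, I decompose
\begin{equation*}
    \mathcal{T}^{\ge d}[Z_n(t) - Z_n(s)]
        = \tfrac{k+1-ns}{\sqrt n}\mathcal{T}^{\ge d}\phi(X_k)
        + \mathcal{T}^{\ge d}\bigl[Y_n(\tfrac{l}{n}) - Y_n(\tfrac{k+1}{n})\bigr]
        + \tfrac{nt-l}{\sqrt n}\mathcal{T}^{\ge d}\phi(X_l).
\end{equation*}
The middle summand is exactly the object controlled by Theorem~\ref{thm:tightness}, bounded by $\sqrt{(l-k-1)/n}\le\sqrt{|t-s|}$; the two boundary summands, by stationarity of $(X_u)$, have combined $L^p$-norm at most $\sqrt n(t-s)\,\|\mathcal{T}^{\ge d}\phi(X_0)\|_{L^p(\Omega)}$, and this is again $\lesssim \sqrt{|t-s|}$ because either $l = k+1$ (forced when $|t-s|<1/n$) in which case $n(t-s)\le 2$ so that $\sqrt n(t-s)\le\sqrt{2|t-s|}$, or $|t-s|\ge 1/n$ in which case each boundary term is individually $\le 1/\sqrt n\le \sqrt{|t-s|}$.

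The remaining ingredient is the uniform moment bound $\|\mathcal{T}^{\ge d}\phi(X_0)\|_{L^p(\Omega)} < \infty$ for $p > 4$. This follows from the same Fourier machinery developed in Section~\ref{sec:preliminaries} and used in the proof of Theorem~\ref{thm:tightness}, specialized to a single site: by Lemma~\ref{TdphiX} one has $\mathcal{T}^{\ge d}\phi(X_0) = \langle\widehat\phi,\mathcal{T}^{\ge d} e^{{\rm i}\theta X_0}\rangle_\theta$, and then Lemmas~\ref{lem:local_decompose}, \ref{phihatdecay}, \ref{lem:exchange} combined with the $k=0$ instance of Corollary~\ref{DALp} (which yields $\|\partial_\theta^r \mathcal{T}^{\ge d} e^{{\rm i}\theta X_0}\|_{L^p(\Omega)}\lesssim 1$ uniformly for $\theta$ in each unit cube) reduce matters to the convergence of the series $\sum_{K\in\ZZ}(1+|K|)^{-\gamma_0}$, which is guaranteed by $\gamma_0 > d_\alpha + 1 > 1$.

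The proof is essentially bookkeeping, with no genuinely new probabilistic content beyond Theorem~\ref{thm:tightness}. The only subtle point is the boundary estimate when $|t-s| < 1/n$: each of the two boundary terms is individually only of order $1/\sqrt n$, which can exceed $\sqrt{|t-s|}$; one must sum them first and recognise that the sum telescopes to $\sqrt n(t-s)$, from which the correct $\sqrt{|t-s|}$ modulus is recovered.
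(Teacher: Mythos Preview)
Your argument is correct and follows essentially the same route as the paper: both proofs reduce the $Z_n$ increment to the $Y_n$ increment estimate from Theorem~\ref{thm:tightness} plus single-site boundary terms controlled by $\|\mathcal{T}^{\ge d}\phi(X_0)\|_{L^p(\Omega)}<\infty$, with the case analysis organized slightly differently (the paper splits first on whether $t-s>1/n$, you split first on whether $[ns]=[nt]$) but to the same effect.
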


\begin{proof}
According to \cite[Equation~(4.1)]{nualart_breuer_clt}, it suffices to prove for $p>2$, the inequality
\begin{equation*}
    \|\mathcal{T}^{\ge d} Z_n(t) - \mathcal{T}^{\ge d} Z_n(s)\|_{L^p(\Omega)} \lesssim \sqrt{t-s}
\end{equation*}
holds uniformly in $0\leq s < t\leq 1$ and $n$.

If $t-s > \frac{1}{n}$, then the result follows from Theorem~\ref{thm:tightness} since
\begin{equation*}
\begin{split}
    \|\mathcal{T}^{\ge d} Z_n(t) - \mathcal{T}^{\ge d} Z_n(s)\|_{L^p(\Omega)} \lesssim & \|\mathcal{T}^{\ge d} Y_n(t) - \mathcal{T}^{\ge d} Y_n(s)\|_{L^p(\Omega)} + \frac{1}{\sqrt{n}} \|\mathcal{T}^{\ge d}\phi(X_{[ns]})\|_{L^p(\Omega)}\\
    & + \frac{1}{\sqrt{n}} \|\mathcal{T}^{\ge d}\phi(X_{[nt]})\|_{L^p(\Omega)}
\end{split}
\end{equation*}
and $\|\mathcal{T}^{\ge d}\phi(X_u)\|_{L^p(\Omega)} \lesssim 1$. If $t-s\leq \frac{1}{n}$ and $(ns,nt)\cap \mathbb{Z}=\varnothing$, then we have
\begin{equation*}
    \|\mathcal{T}^{\ge d} Z_n(t) - \mathcal{T}^{\ge d} Z_n(s)\|_{L^p(\Omega)} = \sqrt{n}(t-s) \|\mathcal{T}^{\ge d}\phi(X_{[ns]})\|_{L^p(\Omega)} \lesssim \sqrt{t-s}.
\end{equation*}
For the remaining case $(ns,nt)\cap \mathbb{Z}=\{[nt]\}$, we have
\begin{equation*}
\begin{aligned}
    &\|\mathcal{T}^{\ge d} Z_n(t) - \mathcal{T}^{\ge d} Z_n(s)\|_{L^p(\Omega)} \\
    \leq &\|\mathcal{T}^{\ge d} Z_n(t) - \mathcal{T}^{\ge d} Z_n([nt]/n)\|_{L^p(\Omega)} + \|\mathcal{T}^{\ge d} Z_n([nt]/n) - \mathcal{T}^{\ge d} Z_n(s)\|_{L^p(\Omega)}.
\end{aligned}
\end{equation*}
Then the desired bound follows from the previous case. This completes the proof.
\end{proof}

\section{Convergence of finite-dimensional distributions} \label{sec:fdd_convergence}

This section aims to prove the following theorem.
\begin{thm}\label{thm:fdd_convergence}
   Suppose the assumptions in Theorem~\ref{thm} holds. Fix $N\in\NN^+$, \(t_1, \ldots, t_N \in [0,1]\) and \(b_1, \ldots, b_N \in \mathbb{R}\). We have 
        \begin{equation}\label{2pmomentthm}
            \widetilde{Y}_n:=\frac{1}{\sqrt{n}} \sum_{j=1}^N b_j \sum_{u=0}^{[nt_j]-1} \mathcal{T}^{\ge d} \phi\left(X_u\right)
        \end{equation}
    converges to $\mu\sum_{j=1}^N b_j B_{t_j}$ in law, where $\mu$ is given in \eqref{e:sigma} and $(B_t)_{t \in [0,1]}$ is a standard Brownian motion. Then the finite-dimensional distributions of $\mathcal{T}^{\ge d} Y_n$ converge to those of $\mu B$.   
\end{thm}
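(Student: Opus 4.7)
The plan is to apply the method of moments. Since $\mu\sum_{j=1}^N b_j B_{t_j}$ is centered Gaussian and hence uniquely determined by its moments, it suffices to show that, for every $p\in\NN^+$,
\begin{equation*}
    \E[\widetilde Y_n^p] \xrightarrow{n\to\infty} \E\Big[\Big(\mu\sum_{j=1}^N b_j B_{t_j}\Big)^p\Big].
\end{equation*}
The case $p=1$ is trivial. For $p=2$, stationarity gives
\begin{equation*}
    \E[\widetilde Y_n^2] = \frac{1}{n}\sum_{j,k=1}^N b_j b_k\sum_{u=0}^{[nt_j]-1}\sum_{v=0}^{[nt_k]-1}c(u-v),
\end{equation*}
with $c(u)=\E[\tT^{\ge d}\phi(X_0)\tT^{\ge d}\phi(X_u)]$. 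The covariance estimate in Remark~\ref{cov} yields $\sum_u|c(u)|<\infty$, and a Ces\`aro-type argument gives the limit $\mu^2\sum_{j,k}b_j b_k(t_j\wedge t_k)$, matching the target second moment.

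For $p\ge 3$, a direct method-of-moments calculation on $\phi(X_u)$ would require pointwise bounds on all higher-order Malliavin derivatives of $\phi(X_u)$, which the polynomial class $\cC^{\gamma_0}_{M_0}$ does not supply. Following the strategy outlined in the introduction, I would Fourier-expand via Lemma~\ref{TdphiX} and invoke Fubini to obtain
\begin{equation*}
    \E[\widetilde Y_n^p] = \int_{\RR^p}\widehat{\phi}^{\otimes p}(\vec\theta)\,\E\Big[\prod_{\ell=1}^p S_n(\theta_\ell)\Big]\,d\vec\theta,\qquad S_n(\theta):=\frac{1}{\sqrt n}\sum_{j=1}^N b_j\sum_{u=0}^{[nt_j]-1}\tT^{\ge d}e^{{\rm i}\theta X_u}.
\end{equation*}
For each fixed $\vec\theta$, I would apply the classical diagram/method-of-moments argument to the stationary trigonometric sequence $(\tT^{\ge d}e^{{\rm i}\theta_\ell X_u})_u$: expand each factor in its Poisson chaos and use the product formula for Wiener-It\^o integrals on the Poisson space. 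The scaling $n^{-p/2}$ combined with summability of the stationary covariance kernel should ensure that only complete pairings of $\{1,\ldots,p\}$ survive in the limit, producing a Wick-type formula $\sum_\pi\prod_{\{\ell,\ell'\}\in\pi}C(\theta_\ell,\theta_{\ell'})$ with $C(\theta,\theta')=\lim_n\E[S_n(\theta)S_n(\theta')]$. After integrating against $\widehat{\phi}^{\otimes p}$ and re-summing over pairings, this reproduces the Gaussian moment $\E[(\mu\sum_j b_j B_{t_j})^p]$.

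To justify passing the limit inside the $\vec\theta$-integral, I would combine the moment bound $\|S_n(\theta)\|_{L^p(\Omega)}\lesssim(1+|\theta|)^{d_\alpha}$, obtained exactly as in the proof of Theorem~\ref{thm:tightness} from Corollary~\ref{DALp} together with the $L^p$ spectral gap inequality, with the decay $\|\widehat{\phi}^{\otimes p}\|_{M_0+2,\fR_{\vec K}}\lesssim\prod_\ell(1+|K_\ell|)^{-\gamma_0}$ from Lemma~\ref{phihatdecay} and the local decomposition of Lemma~\ref{lem:local_decompose}. The assumption $\gamma_0>d_\alpha+1$ then makes the resulting series over $\vec K\in\ZZ^p$ absolutely summable, so dominated convergence applies.

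The main obstacle will be the fixed-$\vec\theta$ moment analysis. Unlike the Gaussian case, the product formula for multiple Wiener-It\^o integrals on the Poisson space generates contractions coupling more than two factors, so one must carefully verify that all such non-pairing contributions are $o(1)$ under the $n^{-p/2}$ normalization and that only perfect matchings survive in the limit. This is the Poisson counterpart of the diagram-cancellation argument in the classical Breuer-Major proof; crucially, it relies on the truncation $\tT^{\ge d}$ to gain enough decay of the stationary covariance kernel (cf.\ Remark~\ref{cov}) to suppress all higher-order clusters.
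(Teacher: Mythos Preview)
Your overall plan---Fourier-expand via Lemma~\ref{TdphiX}, run the method of moments on the trigonometric sequence, and control the non-pairing Poisson diagrams---is exactly the paper's strategy. But two technical devices that make the plan executable are missing from your proposal, and without them the argument has genuine gaps.

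First, the chaos truncation. You write ``expand each factor in its Poisson chaos and use the product formula for Wiener--It\^o integrals,'' but $\tT^{\ge d}e^{i\theta X_u}$ is an \emph{infinite} sum of multiple integrals. Proposition~\ref{wick} (the product formula) applies only to a finite product of single Wiener--It\^o integrals, so you cannot feed $\tT^{\ge d}e^{i\theta X_u}$ into it directly; one must first replace $\tT^{\ge d}$ by $\tT^{[d,m]}$ and then let $m\to\infty$. The paper does precisely this (see \eqref{eq:fourier} and Proposition~\ref{thm:prod_convergence_truncated}), and removes the truncation afterwards not by moment convergence but by a uniform-in-$n$ $L^2$ estimate (Lemma~\ref{lem:Y_eps-Y_eps_m}).

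Second, the passage of the $n$-limit through the $\vec\theta$-pairing. You invoke dominated convergence on ``$\int_{\RR^p}\widehat\phi^{\otimes p}(\vec\theta)\cdots d\vec\theta$,'' but $\phi\in\cC^{\gamma_0}_{M_0}$ only grows polynomially, so $\widehat\phi$ is a tempered distribution and there is no Lebesgue integral to dominate. The local decomposition (Lemma~\ref{lem:local_decompose}) you cite gives a bound, but to pass a limit through $\langle\widehat\phi^{\otimes p},\cdot\rangle$ you would need uniform-on-compacts convergence of the $\theta$-\emph{derivatives} of $\bB_{n,\ell}$, not just the moment bound $\|S_n(\theta)\|_{L^p}\lesssim(1+|\theta|)^{d_\alpha}$. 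The paper avoids this entirely by mollifying $\phi\mapsto\phi_\eps$ so that $\widehat{\phi_\eps}$ is bounded with compact support; then the pairing \emph{is} a genuine integral and ordinary dominated convergence applies (Lemma~\ref{lem:Y_eps_m}). The mollification is removed, again, via a uniform $L^2$ estimate (Lemma~\ref{lem:Y_eps-Y}), not by direct moment computation.

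In short: the paper never proves $\E[\widetilde Y_n^\ell]\to\E[(\mu\sum b_jB_{t_j})^\ell]$ directly. It proves moment convergence only for the doubly regularised $\widetilde Y_n^{\eps,m}$, and transfers the result to $\widetilde Y_n$ by the $L^2$ approximations $\sup_n\|\widetilde Y_n^{\eps}-\widetilde Y_n^{\eps,m}\|_2\to 0$ and $\sup_n\|\widetilde Y_n-\widetilde Y_n^{\eps}\|_2\to 0$. Your direct route might be salvageable with extra work (justifying the interchange of the infinite chaos sum with the product and expectation, plus an equicontinuity argument in $\theta$), but as written the two approximation layers are not optional details---they are what make the diagram computation and the limit exchange rigorous.
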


As a consequence, the convergence of finite-dimensional distributions also holds for $\mathcal{T}^{\ge d} Z_n$.
\begin{cor} \label{cor:z_fdd}
    The same result as in Theorem~\ref{thm:fdd_convergence} holds for $\mathcal{T}^{\ge d} Z_n$.
\end{cor}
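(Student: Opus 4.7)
The plan is to reduce the claim for $Z_n$ to the already-established convergence for $Y_n$ in Theorem~\ref{thm:fdd_convergence} by showing that $\mathcal{T}^{\ge d} Z_n$ and $\mathcal{T}^{\ge d} Y_n$ differ by a term that vanishes in probability at every fixed time. Concretely, from the definition of $Z_n$ we have
\begin{equation*}
    \mathcal{T}^{\ge d} Z_n(t) - \mathcal{T}^{\ge d} Y_n(t) = \frac{nt - [nt]}{\sqrt{n}}\, \mathcal{T}^{\ge d}\phi(X_{[nt]}),
\end{equation*}
so each joint vector $(\mathcal{T}^{\ge d} Z_n(t_1),\dots,\mathcal{T}^{\ge d} Z_n(t_N))$ equals $(\mathcal{T}^{\ge d} Y_n(t_1),\dots,\mathcal{T}^{\ge d} Y_n(t_N))$ plus a remainder vector whose $j$-th entry is $\frac{nt_j-[nt_j]}{\sqrt{n}} \mathcal{T}^{\ge d}\phi(X_{[nt_j]})$.

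Next I would control this remainder in $L^2(\Omega)$. Since $|nt_j-[nt_j]|\le 1$, it suffices to show $\|\mathcal{T}^{\ge d}\phi(X_u)\|_{L^2(\Omega)} \lesssim 1$ uniformly in $u$. By stationarity of $(X_u)$ this reduces to $u=0$, and the bound follows from the Fourier representation $\mathcal{T}^{\ge d}\phi(X_0) = \langle \widehat{\phi}, \mathcal{T}^{\ge d} e^{{\rm i}\theta X_0}\rangle_\theta$ in Lemma~\ref{TdphiX} together with Lemmas~\ref{lem:local_decompose}, \ref{phihatdecay}, \ref{lem:exchange} and Corollary~\ref{DALp} applied with $k=0$, exactly as in the tightness argument but without summation in $u$. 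Thus
\begin{equation*}
    \Big\| \tfrac{nt_j - [nt_j]}{\sqrt{n}} \mathcal{T}^{\ge d}\phi(X_{[nt_j]}) \Big\|_{L^2(\Omega)} \lesssim \tfrac{1}{\sqrt{n}} \longrightarrow 0,
\end{equation*}
so each coordinate of the remainder tends to $0$ in probability.

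Finally, Theorem~\ref{thm:fdd_convergence} gives convergence in law of $(\mathcal{T}^{\ge d} Y_n(t_1),\dots,\mathcal{T}^{\ge d} Y_n(t_N))$ to $(\mu B_{t_1},\dots,\mu B_{t_N})$ (applied with suitable linear combinations), and Slutsky's theorem then yields the same limit for $(\mathcal{T}^{\ge d} Z_n(t_1),\dots,\mathcal{T}^{\ge d} Z_n(t_N))$. There is no substantive obstacle here; the only point requiring care is confirming that the uniform $L^2$ bound on $\mathcal{T}^{\ge d}\phi(X_u)$ is available, which is already implicit in the machinery developed for Theorem~\ref{thm:tightness}.
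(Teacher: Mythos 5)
Your argument is correct and is essentially the paper's proof: decompose $\mathcal{T}^{\ge d} Z_n(t)=\mathcal{T}^{\ge d} Y_n(t)+\frac{nt-[nt]}{\sqrt n}\mathcal{T}^{\ge d}\phi(X_{[nt]})$, show the remainder vanishes in $L^2(\Omega)$, and conclude by Slutsky from Theorem~\ref{thm:fdd_convergence}. The only difference is cosmetic: the paper gets the uniform bound directly from stationarity and $\mathbf{E}\big(\mathcal{T}^{\ge d}\phi(X_0)\big)^2<\infty$, whereas you re-derive it through the Fourier machinery, which is heavier than needed but not wrong.
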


\begin{proof}
    Since $Z_n(t) = Y_n(t) + \frac{nt-[nt]}{\sqrt{n}}\,\phi(X_{[nt]})$, the result follows from
    \begin{equation*}
        \mathbf{E}\left(\frac{nt-[nt]}{\sqrt{n}}\,\mathcal{T}^{\ge d}\phi(X_{[nt]})\right)^2\leq \frac{1}{n}\mathbf{E}\left(\mathcal{T}^{\ge d}\phi(X_0)\right)^2\rightarrow 0, \quad n \rightarrow \infty.
    \end{equation*}
\end{proof}

    We introduce some notations. Let $\ell, a_1, \ldots, a_{\ell} \in \mathbb{N}$. Define $a\coloneqq a_1+\dots+a_{\ell} $ and $\vec{a}\coloneqq(a_1,\cdots,a_{\ell})$. We denote $\Pi_{a}$ as the set of all the partitions of $\{1,2,\dots,{a}\}$. For $\sigma \in \Pi_{a}$, $|\sigma|$ represents the number of blocks in $\sigma$. Define 
        \begin{equation*}
            J_q\coloneqq\left\{j \in \mathbb{N}: a_1+\cdots+a_{q-1}<j \leq a_1+\cdots+a_q\right\}, \quad q=1, \ldots, \ell
        \end{equation*}
    and $\pi\coloneqq\left\{J_q: 1 \leq q \leq \ell\right\}$. Let $\Pi(\vec{a}) \subset \Pi_{a}$ denote the set of all $\sigma \in \Pi_{a}$ with $\left|\fB \cap J\right| \leq 1$ for every $\fB \in \sigma$ and for every $J \in \pi$. We denote $\Pi_{\geq 2}(\vec{a})$ (resp. $\Pi_{= 2}(\vec{a})$) as the set of all $\sigma \in \Pi(\vec{a})$ such that $|\fB| \geq 2$ (resp. $|\fB|= 2$) for every $\fB \in \sigma$.
 
    For $\sigma \in \Pi_{a}$, we can write $\sigma$ as $\{\fB_1,\dots,\fB_{|\sigma|}\}$, where $\min \fB_1<\cdots<\min \fB_{|\sigma|}$. For every function $f: \RR^a \rightarrow \mathbb{R}$ and $\sigma \in \Pi_a$, we define $f_\sigma: \RR^{|\sigma|} \rightarrow \mathbb{R}$ by 
    \begin{equation}\label{eq:f_sigma}
        f_\sigma (x_1, \dots, x_{|\sigma|}) = f(y_1, \dots, y_a) 
    \end{equation}
    with $y_k = x_i$ if and only if $k \in \fB_i$.

\subsection{A formula for expectation of product of Wiener-It\^o integrals}
    
Recall the definition of $\Psi_\beta$ in \eqref{e:def:psi}. The goal of this subsection is to prove the following proposition.
\begin{prop}\label{wick}
    Given functions $f_i:\RR^{a_i}\mapsto\RR$ for $i = 1,\dots,\ell$. If there exists $\beta > \frac{1}{2}$ such that $\frac{f_i}{\Psi_{\beta}^{\otimes {a_i}}} \in L^{\infty}(\mathbb{R}^{a_i})$ for $i = 1,\dots,\ell$, then we have
        \begin{equation} \label{e:wick}
            \mathbf{E} \bigg[\prod_{i=1}^{\ell} I_{a_i} \left(f_i\right) \bigg] = \sum_{\sigma \in \Pi_{\geq 2} (\vec{a})} \int_{\mathbb{R}^{|\sigma|}} \left( \otimes_{i=1}^\ell f_{i}\right)_\sigma dx.
        \end{equation}
\end{prop}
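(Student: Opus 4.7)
This is the classical diagram/moment formula for products of Wiener--It\^o integrals with respect to a Poisson point process. I would prove it by reducing to tensors of indicators on pairwise disjoint bounded sets and then applying the multivariate Mecke formula, using a binomial identity to kill the singleton blocks.

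First, I would check that the right-hand side of \eqref{e:wick} is well defined under the hypothesis. Since $|f_i(\vec{y})|\lesssim \prod_{j=1}^{a_i}\Psi_\beta(y_j)$ with $\beta>1/2$, for any $\sigma\in\Pi_{\geq 2}(\vec{a})$ the integrand $(\otimes_i f_i)_\sigma$ is dominated by $\prod_{B\in\sigma}\Psi_\beta(x_B)^{|B|}$, and since each $|B|\geq 2$ and $2\beta>1$, each such factor lies in $L^1(\RR)$; hence the integral converges absolutely. The same estimate gives $f_i\in L^2(\RR^{a_i})$, so each $I_{a_i}(f_i)$ is defined, and standard $L^p$-bounds on multiple Poisson integrals (e.g.\ Corollary~\ref{cor:I_k_Lp}) ensure $\prod_i I_{a_i}(f_i)\in L^1(\Omega)$.

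Next, by bilinearity of $I_{a_i}$, continuity in $L^2$, and dominated convergence on the right-hand side (using the $\Psi_\beta^{\otimes a_i}$ majorants), I would reduce to the case where each $f_i$ is a finite linear combination of tensors $\one_{A^{(i)}_{1}}\otimes\cdots\otimes\one_{A^{(i)}_{a_i}}$ with pairwise disjoint bounded $A^{(i)}_j$'s. On such simple tensors, I would expand each factor using
\begin{equation*}
I_{a_i}(f_i)=\sum_{k=0}^{a_i}(-1)^{a_i-k}\binom{a_i}{k}\int_{\RR^{a_i-k}}\int_{\RR^k} f_i\,d\eta^{\diamond k}\,dx,
\end{equation*}
multiply out, and compute each resulting expectation using the multivariate Mecke/Campbell formula for Poisson: for any splitting $(k_1,\dots,k_\ell)$, the expectation $\mathbf{E}\int g\,d(\eta^{\diamond k_1}\otimes\cdots\otimes\eta^{\diamond k_\ell})$ equals the sum, over partitions $\tau$ of $\{1,\dots,k_1+\cdots+k_\ell\}$ that respect the $\diamond$-constraint inside each $\eta^{\diamond k_q}$ (no block with two indices in the same $\eta^{\diamond k_q}$-slot), of $\int g_\tau\,d\vec{x}$.

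Finally comes the combinatorial bookkeeping. For each $q$, the inner integration against $\eta^{\diamond k_q}$ identifies $k_q$ out of $a_q$ indices of $f_i$, while the remaining $a_q-k_q$ indices are integrated against Lebesgue measure (viewed as singleton blocks that collapse to a single variable). Summed over all $(k_1,\dots,k_\ell)$ with the $(-1)^{a_i-k_i}\binom{a_i}{k_i}$ weights, one obtains a sum over partitions $\sigma\in\Pi(\vec{a})$ (the $\diamond$-constraint is exactly the $|B\cap J_q|\le 1$ condition). Any $\sigma$ containing a singleton block $\{j\}\subset J_q$ receives a weighted contribution proportional to $\sum_{k=0}^{a_q}(-1)^{a_q-k}\binom{a_q}{k}=0$, so such terms cancel, leaving precisely the sum over $\sigma\in\Pi_{\geq 2}(\vec{a})$ with coefficient one. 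This is \eqref{e:wick}.

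The main obstacle is this combinatorial cancellation argument — tracking which subsets of indices are integrated against $\eta^{\diamond k}$ versus against Lebesgue, and verifying that the signed contributions of partitions with any singleton block vanish. A cleaner alternative is to observe that the $L^p$ calculations of Section~\ref{sec:tightness} (or direct induction using the Malliavin derivative formula \eqref{DxFchaos} together with the orthogonality relation \eqref{e:iso}) can be bootstrapped to the general product case, but the diagram-formula route above is the most self-contained.
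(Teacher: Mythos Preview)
Your route is quite different from the paper's. The paper does not re-derive the diagram formula: it cites the $L^1$ version (Lemma~\ref{L1wick}, which is \cite[Theorem~12.7]{LecturesonPP}) as a black box, applies it to the spatial truncations $(f_i)_{(k)}=f_i\cdot \one_{(-k,k)}^{\otimes a_i}$, and then sends $k\to\infty$. The whole point of the proposition is the passage to the limit, and the technical content is Lemma~\ref{Lpitowienerint}, which shows $I_{a_i}\big((f_i)_{(k)}\big)\to I_{a_i}(f_i)$ in $L^p(\Omega)$ for every $p\ge 2$ using the spectral gap inequality and the $\Psi_\beta$ domination. H\"older then gives convergence of the product on the left, and dominated convergence (with majorant $\Psi_\beta^{\otimes a}$) handles the right-hand side. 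Your proposal instead invests most of its effort in reproving Lemma~\ref{L1wick} via Mecke and the binomial cancellation of singletons --- a correct and standard argument, but not the novel step here.

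There is also a genuine gap in your approximation step. You appeal to ``continuity in $L^2$'' to reduce to simple tensors, but $L^2$ convergence of each $I_{a_i}(f_i^{(n)})$ is not enough to pass the product $\prod_{i=1}^\ell I_{a_i}(f_i^{(n)})$ to the limit in $L^1(\Omega)$: you need $L^\ell$ convergence of each factor (and uniform $L^\ell$ bounds), which does not follow from $f_i^{(n)}\to f_i$ in $L^2(\RR^{a_i})$ alone. In the same vein, generic simple-tensor approximants need not satisfy $|f_i^{(n)}|\lesssim \Psi_\beta^{\otimes a_i}$ uniformly in $n$, so your dominated-convergence argument on the right-hand side is not justified as stated. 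Both issues are exactly what the paper's truncation $(f_i)_{(k)}$ is designed to cure: it trivially preserves the pointwise bound $|(f_i)_{(k)}|\le |f_i|\lesssim\Psi_\beta^{\otimes a_i}$, it puts $(f_i)_{(k)}\in L^1\cap L^2$ so Lemma~\ref{L1wick} applies, and Lemma~\ref{Lpitowienerint} supplies the required $L^p$ convergence. If you want to keep your from-scratch combinatorics, you should first truncate as the paper does, apply your Mecke argument to the compactly supported truncations (where it is unproblematic), and then borrow Lemma~\ref{Lpitowienerint} for the limit.
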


The proof is based on the following $L^1$ version formula and an approximation argument.
\begin{lem}[\cite{LecturesonPP}, Theorem~12.7]\label{L1wick} 
    Suppose $f_i \in L^1\left(\mathbb{R}^{a_i}\right)$ for $i \in\{1, \ldots, \ell\}$. If
        \begin{equation*}
            \int_{\mathbb{R}^{|\sigma|}}\left(\otimes_{i=1}^\ell|f_{i}|\right)_\sigma dx<\infty, \quad \forall\sigma \in \Pi(\vec{a}),
        \end{equation*}
    then we have
        \begin{equation*}
            \mathbf{E} \bigg[\prod_{i=1}^{\ell} I_{a_i}\left(f_i\right) \bigg] = \sum_{\sigma \in \Pi_{\geq 2} (\vec{a})} \int_{\mathbb{R}^{|\sigma|}} \left( \otimes_{i=1}^\ell  f_{i}\right)_\sigma dx.
        \end{equation*}
\end{lem}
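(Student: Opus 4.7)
The natural plan is to approximate the $f_i$ by compactly supported functions that fall under Lemma~\ref{L1wick}, and then pass to the limit on both sides of the identity. Concretely, for each $R>0$ set $f_i^R(\vec{y}) = f_i(\vec{y}) \mathbf{1}_{\{\|\vec{y}\|_\infty \leq R\}}$. Since $|f_i| \leq C \Psi_\beta^{\otimes a_i}$, each $f_i^R$ is bounded and has compact support, so $f_i^R \in L^1(\RR^{a_i}) \cap L^2(\RR^{a_i})$ and every $\sigma \in \Pi(\vec{a})$ satisfies $\int_{\RR^{|\sigma|}} |\otimes_i f_i^R|_\sigma \, dx < \infty$. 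Lemma~\ref{L1wick} then gives \eqref{e:wick} with each $f_i$ replaced by $f_i^R$, and the task reduces to showing that both sides converge as $R \to \infty$ to the corresponding expressions with $f_i$.

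For the right-hand side, fix $\sigma \in \Pi_{\geq 2}(\vec{a})$. Pointwise, $(\otimes_i f_i^R)_\sigma \to (\otimes_i f_i)_\sigma$, and the contracted dominating function reads
\begin{equation*}
    \big(\otimes_{i=1}^\ell \Psi_\beta^{\otimes a_i}\big)_\sigma (x_1, \dots, x_{|\sigma|}) = \prod_{j=1}^{|\sigma|} (1+|x_j|)^{-\beta|\fB_j|}.
\end{equation*}
Since every block satisfies $|\fB_j|\geq 2$ and $\beta>\tfrac{1}{2}$, one has $\beta|\fB_j|>1$, so this function lies in $L^1(\RR^{|\sigma|})$. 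Dominated convergence then gives the convergence of each summand, and the sum over the finite set $\Pi_{\geq 2}(\vec{a})$ passes to the limit termwise.

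For the left-hand side, I would use the telescoping identity
\begin{equation*}
    \prod_{i=1}^\ell I_{a_i}(f_i^R) - \prod_{i=1}^\ell I_{a_i}(f_i) = \sum_{j=1}^\ell \Big(\prod_{i<j} I_{a_i}(f_i^R)\Big) I_{a_j}(f_j^R - f_j) \Big(\prod_{i>j} I_{a_i}(f_i)\Big),
\end{equation*}
take expectations, and bound each term by the generalized H\"older inequality placing the $L^\ell(\Omega)$ exponent on every factor. Corollary~\ref{cor:I_k_Lp} controls $\|I_{a_i}(g)\|_{L^\ell(\Omega)}$ by iterated $L^\ell(\RR) \cap L^2(\RR)$ norms of $g$. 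Since $\Psi_\beta \in L^q(\RR)$ for every $q\geq 2$ (because $1/\beta<2$), the dominating function $\Psi_\beta^{\otimes a_i}$ has finite iterated mixed norms, which yields a uniform $L^\ell(\Omega)$ bound on $I_{a_i}(f_i^R)$ and $I_{a_i}(f_i)$. Applying the same corollary to $f_j^R - f_j$ and using dominated convergence on the kernel norms gives $\|I_{a_j}(f_j^R - f_j)\|_{L^\ell(\Omega)} \to 0$, so each telescoped term tends to zero.

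The main obstacle is the left-hand side convergence. In the Gaussian setting, hypercontractivity would reduce matters to $L^2$ convergence of the kernels, but in the Poisson case no such estimate is available; Corollary~\ref{cor:I_k_Lp}, which replaces hypercontractivity and itself rests on the $L^p$ spectral gap inequality, requires joint control of the kernel in mixed $L^\ell \cap L^2$ norms. Verifying that the dominating profile $\Psi_\beta^{\otimes a_i}$ (and hence $f_i^R - f_i$ via dominated convergence) admits such control is precisely where the hypothesis $\beta > \tfrac{1}{2}$ is essential.
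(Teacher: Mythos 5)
There is a genuine gap: your argument is circular with respect to the statement it is supposed to establish. Lemma~\ref{L1wick} \emph{is} the base $L^1$ moment identity; in the paper it carries no proof at all, being quoted verbatim from \cite{LecturesonPP}, Theorem~12.7. Your very first step --- ``Lemma~\ref{L1wick} then gives \eqref{e:wick} with each $f_i$ replaced by $f_i^R$'' --- invokes the lemma you are asked to prove, so nothing in the proposal explains where the combinatorial sum over $\Pi_{\geq 2}(\vec{a})$ comes from. A genuine proof has to start from the explicit definition $I_n(g)=\sum_{k}(-1)^{n-k}\binom{n}{k}\int\!\!\int g\, d\eta^{\diamond k}\, dx$, expand the product $\prod_i I_{a_i}(f_i)$, and compute the expectation of products of integrals against factorial measures via the multivariate Mecke equation, tracking which diagonal configurations contribute; that combinatorial computation is the entire content of the cited theorem and is absent here. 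In addition, you have imported the hypothesis $|f_i|\leq C\,\Psi_\beta^{\otimes a_i}$, which is not part of the statement: Lemma~\ref{L1wick} assumes only $f_i\in L^1(\RR^{a_i})$ together with integrability of all contractions $(\otimes_i|f_i|)_\sigma$, so the pointwise domination underlying your dominated-convergence and mixed-norm estimates is unavailable.

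What you have actually written is, in substance, the paper's proof of Proposition~\ref{wick}, which is the \emph{extension} of Lemma~\ref{L1wick} from $L^1$ kernels to kernels dominated by $\Psi_\beta^{\otimes a_i}$. For that statement your route is essentially the paper's: truncate to $f_i^R=f_i\,\mathbf{1}^{\otimes a_i}_{(-R,R)}$, apply the $L^1$ lemma, pass to the limit on the right-hand side by dominated convergence using $\beta|\fB|>1$ for $|\fB|\geq 2$, and on the left-hand side control $\|I_{a_i}(f_i^R-f_i)\|_{L^\ell(\Omega)}$ through the spectral gap inequality (the paper packages this as Lemma~\ref{Lpitowienerint} plus H\"older, while your telescoping identity with the generalized H\"older inequality and Corollary~\ref{cor:I_k_Lp} is the same estimate in a different arrangement). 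But as a proof of Lemma~\ref{L1wick} itself, the proposal does not engage with the statement.
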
 

\begin{lem}\label{Lpitowienerint}
    Suppose $r\in\NN^+$ and $ \frac{f}{\Psi_{\beta}^{\otimes r}} \in L^{\infty}(\mathbb{R}^{r})$ for some $\beta>\frac{1}{2}$. Define $ f_{(k)}=f \cdot  1^{\otimes r}_{(-k,k)}$ for $k \in \mathbb N$, then for every $p\ge2$, we have
        \begin{equation*}
            I_r (f_{(k)}) \rightarrow I_r (f)
        \end{equation*}
    in $L^{p}(\Omega)$ as $k\rightarrow\infty$.
\end{lem}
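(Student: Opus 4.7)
The plan is to combine the linearity of $I_r$ with the moment bound in Corollary~\ref{cor:I_k_Lp}. Writing $I_r(f) - I_r(f_{(k)}) = I_r(f - f_{(k)})$ and applying that corollary yields
\begin{equation*}
\|I_r(f) - I_r(f_{(k)})\|_{L^p(\Omega)} \lesssim \|\cdots \|f - f_{(k)}\|_{L^p(\RR)\cap L^2(\RR)}\cdots\|_{L^p(\RR)\cap L^2(\RR)},
\end{equation*}
with $r$ iterated norms, so it suffices to show the right-hand side tends to zero as $k\to\infty$.

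Next I would use the pointwise estimate
\begin{equation*}
|f(\vec{x}) - f_{(k)}(\vec{x})| \leq \|f/\Psi_\beta^{\otimes r}\|_{L^\infty} \cdot \Psi_\beta^{\otimes r}(\vec{x}) \cdot \sum_{i=1}^{r} \1_{|x_i|\geq k},
\end{equation*}
which follows from the elementary inequality $1 - \prod_{i=1}^r \1_{(-k,k)}(x_i) \leq \sum_{i=1}^r \1_{|x_i|\geq k}$. Substituting this into the iterated norm and exploiting the tensor product structure, the contribution of the $i$-th summand factorises as
\begin{equation*}
\|\Psi_\beta\|_{L^p(\RR)\cap L^2(\RR)}^{r-1} \cdot \|\Psi_\beta \cdot \1_{|\cdot|\geq k}\|_{L^p(\RR)\cap L^2(\RR)}.
\end{equation*}

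Since $\beta > \tfrac12$ and $p \geq 2$ imply $2\beta > 1$ and $p\beta > 1$, we have $\Psi_\beta \in L^p(\RR) \cap L^2(\RR)$, and dominated convergence then gives $\|\Psi_\beta \cdot \1_{|\cdot|\geq k}\|_{L^p(\RR) \cap L^2(\RR)} \to 0$ as $k\to\infty$. Summing the $r$ contributions completes the proof. The argument is essentially bookkeeping; the only genuine input beyond Corollary~\ref{cor:I_k_Lp} is the $L^p \cap L^2$ integrability of the weight $\Psi_\beta$, which is immediate from the assumption $\beta > \tfrac12$, so I do not foresee any serious obstacle.
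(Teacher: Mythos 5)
Your argument is correct, and it reaches the conclusion by a more direct route than the paper. You apply Corollary~\ref{cor:I_k_Lp} (equivalently the iterated spectral gap bound) directly to the difference $f-f_{(k)}$, dominate it by $\Psi_\beta^{\otimes r}\sum_{i}\1_{|x_i|\ge k}$, and let the tail factor $\|\Psi_\beta \1_{|\cdot|\ge k}\|_{L^p(\RR)\cap L^2(\RR)}$ vanish; this gives $\|I_r(f)-I_r(f_{(k)})\|_{L^p(\Omega)}\to 0$ in one step. The paper instead applies the same moment bound only to $f_{(k)}-f_{(\ell)}$, which lies in $L^1\cap L^2$, concludes that $\{I_r(f_{(k)})\}_k$ is Cauchy in $L^p(\Omega)$, and then identifies the limit as $I_r(f)$ through the $L^2$ isometry $\mathbf{E}(I_r(f_{(k)})-I_r(f))^2=r!\|f_{(k)}-f\|_{L^2(\RR^r)}^2$. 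The estimates (tensor domination by $\Psi_\beta$ and square/\,$p$-integrability of $\Psi_\beta$ from $\beta>\tfrac12$) are identical in both versions; the difference is that the paper's two-step organisation only ever invokes the spectral-gap moment bound for truncated kernels, for which membership in $L^1\cap L^2$ and the identity $D^r_{\vec x}I_r(\cdot)=r!(\cdot)$ are immediate, whereas your shortcut leans on Corollary~\ref{cor:I_k_Lp} in its full $L^2(\RR^r)$ generality, since $f-f_{(k)}$ need not be integrable when $\beta\le 1$. As that corollary is indeed stated (and used elsewhere in the paper) for arbitrary $L^2$ kernels, your version is legitimate and slightly shorter; the paper's version is the more conservative one.
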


\begin{proof}
    We first show that $\{ I_r(f_{(k)}) \}_{k\ge1}$ is a Cauchy sequence in $L^{p}(\Omega)$. 
    By Proposition~\ref{prop:sg_ineq}, we have
    \begin{equation*}
        \left\|I_r\left(f_{(k)}-f_{(\ell)}\right)\right\|_{L^{p}(\Omega)}\lesssim\sum_{v=0}^r\left\|\left\|\left(f_{(k)}-f_{(\ell)}\right)(\vec{x})\right\|_{L^{p}(d\vec{y_v})}\right\|_{L^2(d\vec{z_v})}
    \end{equation*}
     for $k<\ell$, where $\vec{x}=(x_1,\dots,x_r)$, $\vec{y}_v = (x_1,\dots,x_q)$, and $\vec{z}_v=(x_{v+1},\dots,x_r)$. Since
    \begin{equation*}
        |f_{(k)}-f_{(\ell)}| \lesssim \Psi_{\beta}^{\otimes {r}} \cdot \sum_{v=1}^r 1_{\left\{\left|x_1\right|<\ell, \cdots,\left|x_v\right| \in(k, \ell),\left|x_r\right|<\ell\right\}},
    \end{equation*}
    we get
    \begin{equation*}
        \left\|I_r\left(f_{(k)}-f_{(\ell)}\right)\right\|_{L^{p}(\Omega)} \lesssim \big(\left\| \Psi_{\beta} \right\|_{L^2(\RR)} + \left\|\Psi_{\beta} \right\|_{L^p(\RR)} \big)^{r-1} \big( \left\| \Psi_{\beta}1_{(k,\ell)} \right\|_{L^2(\RR)} + \left\|\Psi_{\beta}1_{(k,\ell)} \right\|_{L^p(\RR)} \big).
    \end{equation*}
    Note that $\left\|\Psi_{\beta}1_{(k,\ell)} \right\|_{L^2(\RR)}+\left\|\Psi_{\beta}1_{(k,\ell)} \right\|_{L^p(\RR)}\rightarrow0$ as $k,\ell\rightarrow \infty$ since $\beta>\frac{1}{2}$, which yields the convergence of $I_r (f_{(k)})$ in $L^{p}(\Omega)$.
    
    On the other hand, the equality
    \[\mathbf{E}\left(I_r (f_{(k)})-I_r(f)\right)^{2} = r!\| f_{(k)}-f\|_{L^2{ (\mathbb R^r)}}^2,\]
    implies that the $L^2(\Omega)$-limit of $I_r (f_{(k)})$ is $I_r(f)$. Therefore, the $L^p(\Omega)$ limit must be $I_r(f)$, which concludes the proof.
\end{proof}

    Now we are ready to prove Proposition~\ref{wick}.
\begin{proof}[Proof of Proposition~\ref{wick}]
    Recall the definition of $f_{(k)}$ given in Lemma~\ref{Lpitowienerint}. By Lemma~\ref{L1wick}, we have
    $$
    \mathbf {E}\Big[ \prod_{i=1}^{\ell} I_{a_i}\left((f_i)_{(k)}\right)\Big]=\sum_{\sigma \in \Pi_{\geq 2}(\vec{a})} \int_{\mathbb R^{|\sigma|}}\left( \otimes_{i=1}^{\ell}(f_{i})_{(k)}\right)_\sigma d x.
    $$
    Lemma~\ref{Lpitowienerint} and H\"older inequality imply that as $k\rightarrow\infty$, 
    $$
    \mathbf {E}\bigg[\prod_{i=1}^{\ell} I_{a_i}\left((f_i)_{(k)}\right)\bigg]\rightarrow \mathbf {E}\bigg[\prod_{i=1}^{\ell} I_{a_i}\left(f_i\right)\bigg].
    $$
    Since $\otimes_{i=1}^{\ell} (f_i)_{(k)}\rightarrow \otimes_{i=1}^{\ell}f_{i}$ as $k\rightarrow \infty $ and $|\otimes_{i=1}^{\ell} (f_i)_{(k)}|\lesssim \otimes_{i=1}^{\ell}\Psi_{\beta}^{\otimes {a_i}}$ uniformly in $k$, it follows from dominated convergence theorem that as $k\rightarrow\infty$, 
    \begin{equation*}
        \sum_{\sigma \in \Pi_{\geq 2}(\vec{a})} \int_{\mathbb R^{|\sigma|}}\left( \otimes_{i=1}^{\ell}(f_i)_{(k)}\right)_\sigma d x\rightarrow\sum_{\sigma \in \Pi_{\geq 2}(\vec{a})} \int_{\mathbb R^{|\sigma|}}\left( \otimes_{i=1}^{\ell}f_{i}\right)_\sigma d x.
    \end{equation*}
    This completes the proof.
\end{proof}

\subsection{Proof of Theorem~\ref{thm:fdd_convergence}}
\begin{proof}[Proof of Theorem~\ref{thm:fdd_convergence}]
    
    Let $\phi_\eps := (\phi \cdot \rho (\eps \, \cdot))*(\eps^{-1}\widehat{\rho}(\frac{\cdot}{\eps}))\;$ with $\rho$ being a smooth even cut-off function with compact support. For every $\eps>0$, $\widehat{\phi_\eps}(\cdot)=(\widehat{\phi}*(\eps^{-1}\widehat{\rho}(\frac{\cdot}{\eps})))(\cdot)\rho (\eps \cdot)$ is a function with compact support. To show the convergence of $\widetilde{Y}_n$, we decompose $\widetilde{Y}_n$ as 
        \begin{equation}
           \widetilde{Y}_n=\widetilde{Y}^{\eps,m}_n + (\widetilde{Y}^{\eps}_n-\widetilde{Y}^{\eps,m}_n) + (\widetilde{Y}_n -\widetilde{Y}^{\eps}_n),
        \end{equation} 
    where 
        \begin{equation}
            \widetilde{Y}^{\eps,m}_n = \frac{1}{\sqrt{n}} \sum_{j=1}^N b_j \sum_{u=0}^{[nt_j]-1} \mathcal{T}^{[d,m]} \phi_\eps\left(X_u\right)
        \end{equation}
    and 
        \begin{equation}
            \widetilde{Y}^{\eps}_n = \frac{1}{\sqrt{n}} \sum_{j=1}^N b_j \sum_{u=0}^{[nt_j]-1} \mathcal{T}^{\ge d} \phi_\eps\left(X_u\right).
        \end{equation}
    Combining Lemmas~\ref{lem:Y_eps_m},~\ref{lem:mu_eps_m},~\ref{lem:Y_eps-Y_eps_m} and~\ref{lem:Y_eps-Y}, we conclude our proof.
\end{proof}

    First we write the $\ell$-th moment of $\widetilde{Y}^{\eps,m}_n$ as 
    \begin{equation}\label{eq:fourier}
        \bigg\langle\widehat{\phi}_\eps^{\otimes \ell}, n^{-\frac {\ell}{ 2}}\E \prod_{q=1}^{\ell} \bigg( \sum_{j=1}^{N} b_j \sum_{u=0}^{[n t_j]-1} \tT^{[d,m]} (e^{i \theta_q X_u}) \bigg) \bigg\rangle_{\vec{\theta}}\eqqcolon \langle\widehat{\phi}_\eps^{\otimes \ell}, \bB_{n,\ell,m}\rangle_{\vec{\theta}}.
    \end{equation}
    To get similar form to the right hand side of~\eqref{2pmomentthm}, we first write $\mu^2$ as follows.
\begin{lem}\label{e:mu}
     The series~\eqref{e:sigma} converges absolutely. Moreover, we have
     \begin{equation*}
         \mu^2= \Big\langle\widehat{\phi}^{\otimes 2},  \sum_{k=d}^{\infty}\frac{1}{k!} {\rho}_{k} \Big\rangle\;,
     \end{equation*}
     where
        \begin{equation}\label{eq:rhok}
            \rho_k(\theta_1, \theta_2) = \sum_{u \in \ZZ} \int_{\RR^k} \Big( \E D^k_{\vec{x}} e^{i \theta_1 X_0} \Big) \cdot \Big( \E D^k_{\vec{x}} e^{i \theta_2 X_u} \Big) {\rm d} \vec{x}.
        \end{equation}
\end{lem}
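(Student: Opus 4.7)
The plan is to first establish absolute convergence of the series \eqref{e:sigma} using the covariance estimate stated in Remark~\ref{cov}, and then derive the formula for $\mu^2$ by combining the Wiener--It\^o isometry \eqref{e:iso}, the Fourier representation of $\phi$, and Fubini's theorem.

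For the absolute convergence, Remark~\ref{cov} (whose proof will be extracted from that of Lemma~\ref{Bbound}) asserts $|\mathbf{E}[\mathcal{T}^{\ge d}\phi(X_0)\mathcal{T}^{\ge d}\phi(X_u)]|\lesssim (1+|u|)^{(1-2\alpha)d_\alpha}$, and under the hypothesis $\alpha>\frac12+\frac{1}{2d}$ the exponent is strictly less than $-1$, so $\sum_{u\in\ZZ}$ converges.

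For the formula, I would start from the chaos expansion $\mathcal{T}^{\ge d}\phi(X_u)=\sum_{k\ge d} I_k(f_{k,u})$ with $f_{k,u}(\vec{x})=\frac{1}{k!}\mathbf{E}D^k_{\vec{x}}\phi(X_u)$, and apply the isometry \eqref{e:iso} to get
\begin{equation*}
\mathbf{E}\bigl[\mathcal{T}^{\ge d}\phi(X_0)\mathcal{T}^{\ge d}\phi(X_u)\bigr]=\sum_{k\ge d}\frac{1}{k!}\int_{\RR^k}\bigl(\mathbf{E}D^k_{\vec{x}}\phi(X_0)\bigr)\bigl(\mathbf{E}D^k_{\vec{x}}\phi(X_u)\bigr)\,d\vec{x}.
\end{equation*}
Next, inserting the Fourier representation $\phi(X_u)=\langle\widehat{\phi},e^{{\rm i}\theta X_u}\rangle_\theta$, commuting the pairing with $D^k_{\vec{x}}$ and with the expectation (justified by the moment bounds of Lemma~\ref{Dxeithetalem} combined with Lemma~\ref{lem:local_decompose} and Lemma~\ref{phihatdecay}, just as in the proof of Lemma~\ref{TdphiX}), yields
\begin{equation*}
\mathbf{E}D^k_{\vec{x}}\phi(X_u)=\bigl\langle\widehat{\phi},\mathbf{E}D^k_{\vec{x}}e^{{\rm i}\theta X_u}\bigr\rangle_\theta.
\end{equation*}
Substituting this for both factors and exchanging $d\vec{x}$ with the two $\theta$-pairings via Fubini gives
\begin{equation*}
\mathbf{E}[\mathcal{T}^{\ge d}\phi(X_0)\mathcal{T}^{\ge d}\phi(X_u)]=\Bigl\langle\widehat{\phi}^{\otimes 2},\sum_{k\ge d}\frac{1}{k!}\int_{\RR^k}(\mathbf{E}D^k_{\vec{x}}e^{{\rm i}\theta_1 X_0})(\mathbf{E}D^k_{\vec{x}}e^{{\rm i}\theta_2 X_u})\,d\vec{x}\Bigr\rangle.
\end{equation*}
Finally, summing over $u\in\ZZ$ and exchanging with the sum over $k$ and the distributional pairing yields the claimed identity with $\rho_k$ given by \eqref{eq:rhok}.

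The main obstacle is justifying these interchanges: $\widehat{\phi}$ is only a tempered distribution, and the inner sum over $u$ of the Malliavin kernels a priori has no sign, so Fubini must be replaced by a more careful argument. I expect to handle this by the same local-decomposition strategy used throughout Section~\ref{sec:preliminaries}: one writes $\widehat\phi^{\otimes 2}$ as a sum over unit cubes $\fR_{\vec{K}}$ via Lemma~\ref{lem:local_decompose}, bounds $\|\widehat\phi^{\otimes 2}\|_{M_0+2,\fR_{\vec{K}}}$ by Lemma~\ref{phihatdecay}, and controls derivatives of the smooth function $\sum_u\int(\mathbf{E}D^k_{\vec{x}}e^{{\rm i}\theta_1 X_0})(\mathbf{E}D^k_{\vec{x}}e^{{\rm i}\theta_2 X_u})\,d\vec{x}$ using Lemma~\ref{DxeIithetalem} together with the $\Psi_{\alpha,u}$ estimates of Lemma~\ref{Psiestimate}. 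The same bounds simultaneously show summability in $u$, in $k$, and in the Fourier variables, legitimizing every exchange and thereby completing the proof.
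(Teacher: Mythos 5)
Your overall outline (chaos isometry \eqref{e:iso}, the Fourier representation via Lemma~\ref{TdphiX}, and the local decomposition of $\widehat{\phi}^{\otimes 2}$ through Lemmas~\ref{lem:local_decompose} and~\ref{phihatdecay}) matches the paper, but there is a genuine gap at the one step that carries the real work: the quantitative bound that makes all the interchanges and the sum over $u$ legitimate. You propose to control $\sum_{k\ge d}\frac{1}{k!}\int_{\RR^k}|\E D^k_{\vec x}e^{{\rm i}\theta_1 X_0}|\,|\E D^k_{\vec x}e^{{\rm i}\theta_2 X_u}|\,d\vec x$ using the termwise bounds of Lemma~\ref{Dxeithetalem} (or Lemma~\ref{DxeIithetalem}) together with Lemma~\ref{Psiestimate}. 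Applying those bounds to all $k$ coordinates gives a term of size $C^{2k}(1+|\theta_1|)^k(1+|\theta_2|)^k\langle\Psi_{\alpha,0},\Psi_{\alpha,u}\rangle^k/k!$, and summing over $k$ produces a factor growing like $\exp\big(C(1+|\theta_1|)(1+|\theta_2|)\big)$. Since $\|\widehat{\phi}^{\otimes 2}\|_{M_0+2,\fR_{\vec K}}$ decays only polynomially, like $(1+|K_1|)^{-\gamma_0}(1+|K_2|)^{-\gamma_0}$ with finite $\gamma_0$, the resulting sum over cubes $\vec K\in\ZZ^2$ diverges, so your bounds do not ``simultaneously show summability in $u$, in $k$, and in the Fourier variables.'' Conversely, a crude Cauchy--Schwarz in $\Omega$ gives polynomial $\theta$-growth but no decay in $u$. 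The paper's proof resolves exactly this tension: in each chaos level it factors out only $d_\alpha$ of the $k$ coordinates (estimate \eqref{e:deri_bound}), producing $\Psi_{\alpha,u}^{\otimes d_\alpha}$ and hence $\langle\Psi_{\alpha,0},\Psi_{\alpha,u}\rangle^{d_\alpha}$ after integration, and controls the remaining $k-d_\alpha$ coordinates by Cauchy--Schwarz across the chaos sum together with the isometry \eqref{e:chaos_expansion} (estimate \eqref{e:L^2_bound}), which is uniform in $\theta$. This yields \eqref{covphi}, with only polynomial growth $(1+|\theta_1|)^{d_\alpha}(1+|\theta_2|)^{d_\alpha}$, and that is what makes the pairing with $\widehat{\phi}^{\otimes 2}$ and the sum over $u$ converge. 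This splitting idea is absent from your sketch.

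A secondary issue is circularity: for absolute convergence of \eqref{e:sigma} you invoke Remark~\ref{cov}, whose stated justification is the proof of Lemma~\ref{Bbound}; but the bound on $\widetilde{\bB}_{\ell}$ in Lemma~\ref{Bbound} rests on \eqref{covphi} and \eqref{eq:rhoksum}, which are established precisely inside the proof of the present lemma. The covariance decay $|\E[\mathcal{T}^{\ge d}\phi(X_0)\,\mathcal{T}^{\ge d}\phi(X_u)]|\lesssim(1+|u|)^{(1-2\alpha)d_\alpha}$ therefore has to be proved here from scratch, by the mechanism described above, rather than imported from later statements.
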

\begin{proof}     
    By Lemmas~\ref{TdphiX},~\ref{lem:local_decompose} and~\ref{phihatdecay}, we have 
    \begin{equation}\label{cov:phi}
        \begin{aligned}
            &|\mathbf{E} [\mathcal{T}^{\ge d}\phi\left(X_0\right)\mathcal{T}^{\ge d}\phi\left(X_u\right)]|\\
            \lesssim&\sum_{\vec{K}\in\ZZ^2} (1+|K_1|)^{-\gamma_0}(1+|K_2|)^{-\gamma_0}  \sup_{\vec{r}\in\NN^2:|\vec{r}|_{\infty} \leq M_0+2} \sup_{\vec{\theta}\in\fR_{\vec{K}}} |\E [(\d_{\theta_1}^{r_1}\tT^{\ge d}e^{{\rm i}\theta_1 X_0})(\d_{\theta_2}^{r_2}\tT^{\ge d}e^{{\rm i}\theta_2 X_u})]|,
        \end{aligned}
    \end{equation} 
    where $\vec{\theta}=(\theta_1,\theta_2)$ and $\vec{K}=(K_1,K_2)$.

    By~\eqref{e:iso}, we have
    \begin{equation}\label{eq:cov1}
        \E [(\d_{\theta_1}^{r_1}\tT^{\ge d}e^{{\rm i}\theta_1 X_0})(\d_{\theta_2}^{r_2}\tT^{\ge d}e^{{\rm i}\theta_2 X_u})]=\sum_{k=d}^{\infty}\frac{1}{k!} \int_{\RR^k} \Big( \E D^k_{\vec{x}} \d_{\theta_1}^{r_1}e^{i \theta_1 X_0} \Big) \cdot \Big( \E D^k_{\vec{x}} \d_{\theta_2}^{r_2}e^{i \theta_2 X_u} \Big) {\rm d} \vec{x}.
    \end{equation}
    
    For $k\ge d$, $\theta\in\RR$, $r\in\NN$ and $\vec{x}\in\RR^k$, it follows from Lemma~\ref{lem:Dxe} that
    \begin{equation*}
        \mathbf{E} D^k_{\vec{x}} ( \d_\theta^r e^{{\rm i}\theta X_u} )= \d_\theta^r \bigg(\prod_{j=1}^{d_\alpha}(e^{{\rm i}\theta \psi_u(x_j)}-1)\cdot\mathbf{E} D^{k-d_\alpha}_{\vec{y}_d}e^{{\rm i}\theta X_u}\bigg),
    \end{equation*}
    where $\vec{y}_d=(x_{d_\alpha+1},\dots,x_k)\in\RR^{k-d_\alpha}$. 
    Similar to the proof of Lemma~\ref{Dxeithetalem}, we get
    \begin{equation} \label{e:deri_bound}
        |\mathbf{E} D^k_{\vec{x}} (\d_\theta^r e^{{\rm i}\theta X_u})| \lesssim \sup_{0\le s \le r} |\mathbf{E} D^{k-d_\alpha}_{\vec{y}_d}  (\d_\theta^s e^{{\rm i}\theta X_u})|(1+|\theta|)^{d_\alpha}\prod_{j=1}^{d_\alpha}\Psi_{\alpha,u}(x_j).
    \end{equation}
    By Cauchy inequality and~\eqref{e:chaos_expansion}, we have
    \begin{equation} \label{e:L^2_bound}
        \begin{aligned}
            &\sup_{(\theta_1,\theta_2)\in\RR^2} \sum_{k=d}^{\infty} \frac{1}{k!} \int_{\RR^{k-d_\alpha}} \Big| \E D^{k-d_\alpha}_{\vec{x}} (\d_{\theta_1}^{s_1}e^{i \theta_1 X_0}) \Big| \cdot \Big| \E D^{k-d_\alpha}_{\vec{x}} (\d_{\theta_2}^{s_2}e^{i \theta_2 X_u} )\Big| {\rm d} \vec{x}\\
            \le& \Big(\sup_{\theta_1\in\RR} \sum_{k=d-d_\alpha}^{\infty} \frac{1}{k!} \left\| \mathbf{E} D^k_{\bullet} (\d_{\theta_1}^{s_1} e^{{\rm i}\theta_1 X_0}) \right\|_{L^2(\RR^k)}^2 \Big)^{\frac12} \Big(\sup_{\theta_2\in\RR} \sum_{k=d-d_\alpha}^{\infty} \frac{1}{k!} \left\|\mathbf{E} D^k_{\bullet}(\d_{\theta_2}^{s_2} e^{{\rm i}\theta_2 X_u}) \right\|_{L^2(\RR^k)}^2 \Big)^{\frac12}\\
            =& \sup_{\theta_1\in\RR} \|\d_{\theta_1}^{s_1} e^{{\rm i}\theta_1 X_0}\|_{L^2(\Omega)} \sup_{\theta_2\in\RR} \|\d_{\theta_2}^{s_2} e^{{\rm i}\theta_2 X_u}\|_{L^2(\Omega)}<\infty.
        \end{aligned}
    \end{equation}
    Substituting \eqref{e:deri_bound} and \eqref{e:L^2_bound} into~\eqref{eq:cov1}, we get
    \begin{equation}\label{covphi}
        |\E [(\d_{\theta_1}^{r_1}\tT^{\ge d}e^{{\rm i}\theta_1 X_0})(\d_{\theta_2}^{r_2}\tT^{\ge d}e^{{\rm i}\theta_2 X_u})]|\lesssim (1+|\theta_1|)^{d_\alpha}(1+|\theta_2|)^{d_\alpha}\left\langle \Psi_{\alpha,0},\Psi_{\alpha,u}\right\rangle^{d_\alpha}.
   \end{equation}
    Taking the summation for $\vec{K}$ in~\eqref{cov:phi} and then applying~\eqref{twoPsiL1}, we have
    \begin{equation*}
        |\mathbf{E} [\mathcal{T}^{\ge d}\phi\left(X_0\right)\mathcal{T}^{\ge d}\phi\left(X_u\right)]|\lesssim \left\langle \Psi_{\alpha,0},\Psi_{\alpha,u}\right\rangle^{d_\alpha}\lesssim (1+|u|)^{(1-2\alpha)d_\alpha}.
    \end{equation*}
    The absolute convergence follows from $(1-2\alpha)d_\alpha<-1$.
    
    By absolute convergence, we have
    \begin{equation*}
        \mu^2=\big\langle\widehat{\phi}^{\otimes2}, \sum_{u\in\ZZ} \E [(\tT^{\ge d}e^{{\rm i}\theta X_0})\otimes(\tT^{\ge d}e^{{\rm i}\theta X_u})] \big\rangle = \Big\langle \widehat{\phi}^{\otimes 2},  \sum_{k=d}^{\infty}\frac{1}{k!} {\rho}_{k} \Big\rangle,
    \end{equation*}
    where the last equality follows from \eqref{e:iso}.
\end{proof}
    By Lemma~\ref{e:mu}, we can write the right hand side of~\eqref{2pmomentthm} as
        \begin{equation}\label{eq:fouriermu}
            \begin{aligned}
                (2p-1)!!\bigg( \var \sum_{j=1}^{N} b_j B_{t_j} \bigg)^p\mu^{2p}&= (2p-1)!!\Big\langle\widehat{\phi}^{\otimes 2p}, \bigg( \var \sum_{j=1}^{N} b_j B_{t_j} \bigg)^p\Big(\sum_{k=d}^{\infty}\frac{1}{k!} {\rho}_{k}\Big)^{\otimes p} \Big\rangle\\
                &=\Big\langle\widehat{\phi}^{\otimes 2p},\bigg( \var \sum_{j=1}^{N} b_j B_{t_j} \bigg)^p\sum_{\sigma \in \Pi_{=2}(\vec{1}_{2p})} \prod_{\fB \in \sigma} \sum_{k=d}^{\infty} \frac{1}{k!} \rho_k(\vec{\theta}_{\fB})\Big\rangle\\&\eqqcolon \big\langle\widehat{\phi}^{\otimes 2p},\widetilde{\bB}_{2p}\big\rangle
            \end{aligned}
        \end{equation}
    where $\vec{\theta}_{\fB}=(\theta_q)_{q\in\fB}$ and in the second equality we used ${\phi}^{\otimes 2p} $ is symmetric and there are $(2p-1)!!$ elements in $\Pi_{=2}(\vec{1}_{2p})$. Here, $\rho_k(\vec{\theta}_{\fB})$ is well defined since $\rho_k(\theta_1, \theta_2)=\rho_k(\theta_2, \theta_1)$. Similarly, we define the truncated version of $\widetilde{\bB}_{2p}$ by
        \begin{equation*}
            \widetilde{\bB}_{2p,m}(\vec{\theta})\coloneqq
            \bigg( \var \sum_{j=1}^{N} b_j B_{t_j} \bigg)^p \sum_{\sigma \in \Pi_{=2}(\vec{1}_{2p})} \prod_{\fB \in \sigma} \sum_{k=d}^{m} \frac{1}{k!} \rho_k(\vec{\theta}_{\fB}).
        \end{equation*}
    For $\ell=2p-1$, we define $\widetilde{\bB}_{\ell,m}=\widetilde{\bB}_{\ell}=0$. 
\begin{lem}\label{DT_[d,m]Lp}
    For every $m\ge d$, every $t\in[0,1]$ and every $p\ge2$, we have
        \begin{equation*}
            \bigg\| \frac{1}{\sqrt n} \sum_{u=0 }^{[nt] - 1} \tT^{[d,m]} e^{{\rm i}\theta X_u}\bigg\|_{L^{p}(\Omega)}\lesssim (1+|\theta|)^{d_\alpha}, 
        \end{equation*}
    where the proportionality constant is independent of $n$ and $\theta\in\RR$.
\end{lem}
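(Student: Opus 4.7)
The plan is to mirror the tightness argument of Theorem~\ref{thm:tightness}, specialized to $s=0$, $r=0$, and with no $\widehat{\phi}$ in front (since the bound we want is already pointwise in $\theta$). Because $\tT^{[d,m]}$ is linear and commutes with finite sums, and since $\tT^{[d,m]} = \tT^{\ge d} - \tT^{\ge m+1}$, it suffices to prove
\begin{equation*}
    \Bigl\| \tT^{\ge d'} \Bigl[\tfrac{1}{\sqrt n} \sum_{u=0}^{[nt]-1} e^{{\rm i}\theta X_u}\Bigr] \Bigr\|_{L^p(\Omega)} \lesssim (1+|\theta|)^{d_\alpha}
\end{equation*}
for each $d' \in \{d,\, m+1\}$. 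Note $d_\alpha \le d \le d'$ by the choice of $\alpha$.

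The next step is to invoke Corollary~\ref{cor:multi_sg_ineq} with $k = d_\alpha \le d'$, followed by Minkowski to move $L^p(\Omega)$ inside the sum over $u$:
\begin{equation*}
    \Bigl\| \tT^{\ge d'}\Bigl[\tfrac{1}{\sqrt n}\sum_u e^{{\rm i}\theta X_u}\Bigr]\Bigr\|_{L^p(\Omega)} \lesssim \sum_{v=0}^{d_\alpha} \Bigl\|\Bigl\| \tfrac{1}{\sqrt n} \sum_{u=0}^{[nt]-1} \bigl\|D^{d_\alpha}_{\vec{x}} \tT^{\ge d'} e^{{\rm i}\theta X_u}\bigr\|_{L^p(\Omega)} \Bigr\|_{L^p(d\vec{y}_v)}\Bigr\|_{L^2(d\vec{z}_v)}.
\end{equation*}
For $d' = d$ the pointwise Malliavin derivative bound comes directly from Corollary~\ref{DALp} with $r=0$; for $d' = m+1$ I will observe that the proof of Corollary~\ref{DALp} only uses the identities $D^{d_\alpha}_{\vec{x}} I_q(\mathbf{E}D^q_\bullet e^{{\rm i}\theta X_u}) = 0$ for $q < d_\alpha$ together with Lemmas~\ref{Dxeithetalem} and~\ref{DxeIithetalem}. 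Replacing $d$ by $m+1$ merely enlarges the finite sum $\sum_{q=d_\alpha}^{m}$ of bounded terms, so the same pointwise bound
\begin{equation*}
    \|D^{d_\alpha}_{\vec{x}} \tT^{\ge d'} e^{{\rm i}\theta X_u}\|_{L^p(\Omega)} \lesssim (1+|\theta|)^{d_\alpha}\, \Psi_{\alpha,u}^{\otimes d_\alpha}(\vec{x})
\end{equation*}
holds with an implicit constant depending on $m$ but not on $\theta$, $u$, or $n$.

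Substituting this estimate into the previous display and applying Lemma~\ref{tightnessestamate} with $s = 0$, one obtains
\begin{equation*}
    \Bigl\|\Bigl\|\tfrac{1}{\sqrt n}\sum_{u=0}^{[nt]-1} \Psi_{\alpha,u}^{\otimes d_\alpha}\Bigr\|_{L^p(d\vec{y}_v)}\Bigr\|_{L^2(d\vec{z}_v)} \lesssim \tfrac{1}{\sqrt n}\sqrt{[nt]} \le 1,
\end{equation*}
so that each of the two pieces $\tT^{\ge d}$ and $\tT^{\ge m+1}$ is bounded by $(1+|\theta|)^{d_\alpha}$, which assembles into the claimed estimate. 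There is no real obstacle here—every ingredient is already in place—the only point requiring a sentence of justification is the straightforward generalization of Corollary~\ref{DALp} from $\tT^{\ge d}$ to $\tT^{\ge m+1}$, and this is essentially cosmetic since $m$ is fixed.
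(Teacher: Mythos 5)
Your proposal is correct and follows essentially the same route as the paper: Corollary~\ref{cor:multi_sg_ineq} with $k=d_\alpha$ plus Minkowski, the pointwise bound $\|D^{d_\alpha}_{\vec{x}}(\cdots)\|_{L^p(\Omega)}\lesssim(1+|\theta|)^{d_\alpha}\Psi_{\alpha,u}^{\otimes d_\alpha}(\vec{x})$, and then Lemma~\ref{tightnessestamate} with $s=0$. The only difference is bookkeeping: the paper bounds $D^{d_\alpha}_{\vec{x}}\tT^{[d,m]}e^{{\rm i}\theta X_u}$ directly through the finite chaos sum $\sum_{k=d}^{m}\frac{1}{k!}D^{d_\alpha}_{\vec{x}}I_k(\mathbf{E}D^k_\bullet e^{{\rm i}\theta X_u})$ via Lemma~\ref{DxeIithetalem}, whereas you split $\tT^{[d,m]}=\tT^{\ge d}-\tT^{\ge m+1}$ and reuse (the evident extension of) Corollary~\ref{DALp}, which amounts to the same estimates.
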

\begin{proof}
    By Corollary~\ref{cor:multi_sg_ineq} and Minkowski inequality, we have 
        \begin{equation*} 
            \Big\| \frac{1}{\sqrt{n}} \sum_{u=0 }^{[nt] - 1} \tT^{[d,m]} e^{{\rm i}\theta X_u} \Big\|_{L^p(\Omega)} \lesssim \sum_{v=0}^{d_\alpha}\bigg\|\bigg\|\frac{1}{\sqrt{n}}\sum_{u=0}^{n-1}\|D^{d_\alpha}_{\vec{x}} (\tT^{[d,m]} e^{{\rm i}\theta X_u}) \|_{L^p(\Omega)}\bigg\|_{L^{p}(d\vec{y_v})}\bigg\|_{L^2(d\vec{z_v})},
        \end{equation*}
    where $\vec{x}=(x_1,\dots,x_{d_\alpha})$, $\vec{y}_v = (x_1,\dots,x_v)$, and $\vec{z}_v=(x_{v+1},\dots,x_{d_\alpha})$.By \eqref{DxFchaos} and Minkowski inequality, we get
        \begin{equation} \label{e:d_a_m}
            \big\| D^{d_\alpha}_{\vec{x}} (\tT^{[d,m]} e^{{\rm i}\theta X_u}) \big\|_{L^p(\Omega)} \le \sum_{k=d}^{m} \frac{1}{k!} \big\|D^{d_\alpha}_{\vec{x}} I_k\big(\mathbf E D^{k} _{\bullet}  e^{{\rm i}\theta X_u} \big) \big\|_{L^p(\Omega)}.
        \end{equation}
    Then Lemma~\ref{DxeIithetalem}, we have
        \begin{equation*}
            \big\|D^{d_\alpha}_{\vec{x}} I_k\big(\mathbf E D^{k} _{\bullet}  e^{{\rm i}\theta X_u} \big) \big\|_{L^p(\Omega)} \lesssim  (1+|\theta|)^{d_\alpha} \Psi_{\alpha,u}^{\otimes d_\alpha}(\vec{x}).
        \end{equation*}
    Substituting it into \eqref{e:d_a_m}, we obtain 
        \begin{equation*}
            \begin{aligned}
                \|D^{d_\alpha}_{\vec{x}} (\tT^{[d,m]} e^{{\rm i}\theta X_u}) \|_{L^p(\Omega)} \lesssim (1+|\theta|)^{d_\alpha} \Psi_{\alpha,u}^{\otimes {d_\alpha}}(\vec{x}).
            \end{aligned}
        \end{equation*} 
    Therefore, the conclusion follows from Lemma~\ref{tightnessestamate}.
\end{proof}
\begin{lem}\label{lem:Y_eps_m}
    For every $\eps>0$ and every $m\ge d$, we have
        \begin{equation}\label{2pmomentthm}
            \lim _{n \rightarrow \infty} \mathbf{E}(\widetilde{Y}^{\eps,m}_n)^{2p} =(2p-1) !!  \bigg( \var \sum_{j=1}^{N} b_j B_{t_j} \bigg)^p  \mu_{\eps,m}^{2p},
        \end{equation}
    and
        \begin{equation}\label{2p-1momentthm}
            \lim _{n \rightarrow \infty} \mathbf{E} ( \widetilde{Y}^{\eps,m}_n)^{2p-1} =0,
        \end{equation}
    where $\mu_{\eps,m}^2=\big\langle\widehat{\phi}_\eps^{\otimes 2},\widetilde{\bB}_{2,m}\big\rangle$. As a consequence of the moment method, $\widetilde{Y}^{\eps,m}_n$ converges in law to $\mu_{\eps,m} \sum_{j=1}^N b_j B_{t_j}$, where $(B_t)_{t \in [0,1]}$ is a standard Brownian motion.  
\end{lem}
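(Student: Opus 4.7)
The plan is to compute the limits of all positive-integer moments of $\widetilde Y_n^{\eps,m}$ and then invoke the method of moments; since $\phi_\eps$ is Schwartz by construction, $\widehat{\phi_\eps}$ has compact support and all Fourier integrals are absolutely convergent. Starting from the representation \eqref{eq:fourier}, I would expand each factor $\tT^{[d,m]}(e^{\mathrm{i}\theta_q X_{u_q}})$ in its finite chaos decomposition $\sum_{k_q=d}^{m}\frac{1}{k_q!}I_{k_q}(\mathbf{E} D_\bullet^{k_q} e^{\mathrm{i}\theta_q X_{u_q}})$ and then apply Proposition~\ref{wick} (whose integrability hypothesis follows from Lemma~\ref{DxeIithetalem} together with $\alpha>1/2$). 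The $\ell$-th moment thereby reduces to a finite sum over chaos orders $\vec k\in[d,m]^\ell$, time-indices $(j_q)\in\{1,\dots,N\}^\ell$, lattice shifts $(u_q)$, and partitions $\sigma\in\Pi_{\ge 2}(\vec k)$ of the joint variable set.

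The key step is to analyse the $n$-scaling of each summand by viewing $\sigma$ as a hypergraph on $\{1,\dots,\ell\}$: each block of $\sigma$ becomes a hyperedge whose incident variables come from distinct $J_q$'s, so the degree at vertex $q$ equals $k_q\ge d\ge 1$. Using the same covariance estimates as in \eqref{cov:phi}--\eqref{covphi}, each hyperedge contributes a factor decaying in the pairwise differences $|u_{q}-u_{q'}|$, and summing the $u_q$'s over $[0,[nt_{j_q}])$ with the $\ell^1$-summability provided by \eqref{twoPsiL1}--\eqref{threePsiL1} yields a bound $O(n^{c})$, where $c$ denotes the number of connected components of the hypergraph. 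After the $n^{-\ell/2}$ normalisation, the contribution is $O(n^{c-\ell/2})$. Since every vertex has positive degree, every component contains at least two vertices, so $c\le\ell/2$, with equality precisely when $\ell$ is even and the hypergraph consists of $\ell/2$ two-vertex components (necessarily with $\sigma\in\Pi_{=2}(\vec k)$ and $k_{q_1}=k_{q_2}$ on each component).

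In the odd case $\ell=2p-1$ one has $c\le p-1$, hence $\mathbf{E}(\widetilde Y_n^{\eps,m})^{2p-1}=O(n^{-1/2})\to 0$. In the even case $\ell=2p$, only the pair-component configurations survive in the limit; for each such configuration and each matched pair $\{q_1,q_2\}$ at chaos order $k$, the sum
\begin{equation*}
    \frac{1}{n}\sum_{j_1,j_2} b_{j_1}b_{j_2}\sum_{\substack{u_1<[nt_{j_1}]\\ u_2<[nt_{j_2}]}} \frac{1}{k!}\int_{\RR^k} \big(\mathbf{E} D_{\vec x}^k e^{\mathrm{i}\theta_{q_1} X_{u_1}}\big)\big(\mathbf{E} D_{\vec x}^k e^{\mathrm{i}\theta_{q_2} X_{u_2}}\big)\,d\vec x
\end{equation*}
converges, by stationarity and the absolute summability proved in Lemma~\ref{e:mu}, to $\var(\sum_j b_j B_{t_j})\cdot \tfrac{1}{k!}\rho_k(\theta_{q_1},\theta_{q_2})$ (using $\cov(B_s,B_t)=\min(s,t)$). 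Combining the $(k!)^p$ variable-level partitions per configuration with the factors $\prod_q\frac{1}{k_q!}$, summing $k$ over $[d,m]$, multiplying over the $p$ pairs, integrating against $\widehat{\phi_\eps}^{\otimes 2p}$, and counting the $(2p-1)!!$ matchings in $\Pi_{=2}(\vec 1_{2p})$ reproduces exactly $(2p-1)!!\,\var(\sum_j b_j B_{t_j})^p \mu_{\eps,m}^{2p}$.

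I expect the main obstacle to be cleanly bounding the \emph{impure} contributions (hyperedges of size $\ge 3$, multi-edges between a single pair of vertices, or pair partitions whose underlying graph is not a vertex matching): these require uniform estimates on integrands with repeated $\vec x$-coordinates, which I would derive from Lemma~\ref{DxeIithetalem} together with iterated applications of Lemma~\ref{Psiestimate} (so that each extra edge within a connected component consumes one of the $\Psi_\alpha$ factors without supplying a new free summation). Once the scaling bound $O(n^{c-\ell/2})$ is in hand, the convergence in law stated in the last sentence of the lemma follows immediately from the method of moments, since a centred Gaussian law is uniquely determined by its moments.
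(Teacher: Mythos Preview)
Your plan is correct and is essentially the paper's approach. The paper organizes it modularly: the proof of the lemma itself is just dominated convergence (with the uniform-in-$n$ bound on $\bB_{n,\ell,m}$ coming from Lemma~\ref{DT_[d,m]Lp}), and the pointwise limit $\bB_{n,\ell,m}(\vec\theta)\to\widetilde\bB_{\ell,m}(\vec\theta)$ is deferred to Proposition~\ref{thm:prod_convergence_truncated}, where the partition analysis is phrased in terms of \emph{regular} versus \emph{irregular} $\sigma$ (exactly your pair-matchings versus everything else); the obstacle you single out---reducing blocks of size $\ge 3$ to pairs---is carried out explicitly as Lemma~\ref{lem:error_control} (built from Lemma~\ref{riJcontrol}, i.e.\ iterated Lemma~\ref{Psiestimate}), after which the irregular $\Pi_{=2}$ case is handed off to the classical Breuer--Major proposition.
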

\begin{proof}
    For every $\ell\in\NN$, we have
        \begin{equation}
            \mathbf{E} ( \widetilde{Y}^{\eps,m}_n)^{\ell}= \big\langle \widehat{\phi}_\eps^{\otimes \ell},\bB_{n,\ell,m}\big\rangle = \int_{\RR^\ell} \widehat{\phi}_\eps^{\otimes \ell} (\vec{\theta}) \bB_{n,\ell,m}(\vec{\theta}) d \vec{\theta}.
        \end{equation}
    By H\"older and Minkowski inequality, we have
        \begin{equation}
            |\bB_{n,\ell,m}(\vec{\theta})| \lesssim \prod_{q=1}^{\ell} \Big(\sum_{j=1}^N  \Big\| \frac{1}{\sqrt{n}} \sum_{u=0 }^{[nt_j] - 1} \tT^{[d,m]} e^{{\rm i} \theta_q X_u}\Big\|_{L^{\ell}(\Omega)} \Big) \lesssim \prod_{q=1}^{\ell} (1+|\theta_q|)^{d_\alpha},
        \end{equation}
    where in the second inequality we used Lemma~\ref{DT_[d,m]Lp}.
    Recall that the function $\widehat{\phi}_\eps^{\otimes \ell}$ is bounded and has compact support. Then by Proposition~\ref{thm:prod_convergence_truncated} and the dominated convergence theorem, we conclude our proof.
\end{proof}
\begin{lem}\label{lem:mu_eps_m}
    We have $\lim_{\eps\rightarrow 0} \lim_{m\rightarrow \infty} \mu_{\eps,m}^2=\mu^2$.
\end{lem}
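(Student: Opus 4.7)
The plan is to separate the double limit into its two steps: first show $\lim_{m\to\infty}\mu_{\eps,m}^2 = \mu_\eps^2$ for each fixed $\eps>0$, where $\mu_\eps^2 \coloneqq \langle\widehat{\phi}_\eps^{\otimes 2},\widetilde{\bB}_2\rangle$, and then show $\lim_{\eps\to 0}\mu_\eps^2 = \mu^2$.

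\emph{Inner limit.} Rerunning the Cauchy--Schwarz / Wiener--It\^o isometry argument already used in the proof of Lemma~\ref{e:mu} (based on the factorisation coming from Lemma~\ref{lem:Dxe} and the pointwise estimate \eqref{e:deri_bound}) yields the absolute pointwise bound
\begin{equation*}
\sum_{k=d}^\infty \frac{1}{k!} |\rho_k(\theta_1,\theta_2)| \lesssim (1+|\theta_1|)^{d_\alpha}(1+|\theta_2|)^{d_\alpha} \sum_{u\in\ZZ}\langle\Psi_{\alpha,0},\Psi_{\alpha,u}\rangle^{d_\alpha} < \infty,
\end{equation*}
the $u$-sum being finite thanks to $(1-2\alpha)d_\alpha<-1$. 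Since $\widehat{\phi}_\eps$ is a smooth function with compact support (because $\rho$ is compactly supported), the pairing $\langle\widehat{\phi}_\eps^{\otimes 2},\cdot\rangle$ reduces to an ordinary integral on a compact set against an $L^\infty$ function, so the dominated convergence theorem gives $\mu_{\eps,m}^2\to\mu_\eps^2$ as $m\to\infty$.

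\emph{Outer limit.} Since $\widehat{\phi}$ is only a tempered distribution, the pairing $\langle\widehat{\phi}^{\otimes 2},\widetilde{\bB}_2\rangle$ must be controlled through Lemma~\ref{lem:local_decompose}. I would first differentiate $\rho_k$ under the integral/sum and run the above argument with extra $\theta$-derivatives handled by Lemmas~\ref{Dxeithetalem} and~\ref{DxeIithetalem} in place of the undifferentiated bounds, obtaining
\begin{equation*}
\sup_{|\vec{r}|_\infty \leq M_0+2}\sup_{\vec{\theta}\in\fR_{\vec{K}}}|\d^{\vec{r}}_{\vec{\theta}}\widetilde{\bB}_2(\vec{\theta})|\lesssim (1+|K_1|)^{d_\alpha}(1+|K_2|)^{d_\alpha}.
\end{equation*}
Next, using $\widehat{\phi_\eps} = (\widehat{\phi}*\zeta_\eps)\cdot\rho(\eps\,\cdot)$, I would split
\begin{equation*}
\widehat{\phi}_\eps-\widehat{\phi} = (\widehat{\phi}*\zeta_\eps)\bigl(\rho(\eps\,\cdot)-1\bigr) + \bigl(\widehat{\phi}*\zeta_\eps - \widehat{\phi}\bigr),
\end{equation*}
control the two pieces in $\|\cdot\|_{M_0+2,\fR_K}$ via Lemma~\ref{lem:truncateU} (with $\rho$ playing the role of $\zeta$) and Lemma~\ref{mollifyU} respectively, and combine with the decay $\|\widehat{\phi}\|_{M_0+2,\fR_K}\lesssim(1+|K|)^{-\gamma_0}$ from Lemma~\ref{phihatdecay} to obtain, for any sufficiently small $\beta\in(0,1)$,
\begin{equation*}
\|\widehat{\phi}_\eps-\widehat{\phi}\|_{M_0+2,\fR_K}\lesssim \eps^\beta (1+|K|)^{\beta-\gamma_0}.
\end{equation*}

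To finish, I would write $\widehat{\phi}_\eps^{\otimes 2}-\widehat{\phi}^{\otimes 2} = \widehat{\phi}_\eps\otimes(\widehat{\phi}_\eps-\widehat{\phi}) + (\widehat{\phi}_\eps-\widehat{\phi})\otimes\widehat{\phi}$, apply Lemma~\ref{lem:local_decompose} with $\widetilde{\bB}_2$ as the test function, and combine the two displays to obtain
\begin{equation*}
|\mu_\eps^2 - \mu^2|\lesssim \eps^\beta\sum_{\vec{K}\in\ZZ^2}(1+|K_1|)^{\beta-\gamma_0+d_\alpha}(1+|K_2|)^{-\gamma_0+d_\alpha},
\end{equation*}
where the sum is finite provided $\gamma_0-\beta>d_\alpha+1$; this is achievable by taking $\beta$ small since $\gamma_0>d_\alpha+1$. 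Hence $\mu_\eps^2\to\mu^2$ as $\eps\to 0$. The main obstacle is the bookkeeping in the outer limit: one must simultaneously verify that $\widetilde{\bB}_2$ has the claimed smooth-polynomial local behaviour (by differentiating $\rho_k$ in $\theta$ and controlling the resulting series uniformly in $u$ and $k$) and show that the truncation and mollification errors from Lemmas~\ref{lem:truncateU}--\ref{mollifyU}, when combined with the decay from Lemma~\ref{phihatdecay}, leave just enough margin to produce an absolutely summable bound vanishing in $\eps$; the logic mirrors \cite{HX19}.
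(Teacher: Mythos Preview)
Your proposal is correct and follows essentially the same approach as the paper: dominated convergence for the inner limit using the absolute bound $\sum_{k\ge d}\frac{1}{k!}|\rho_k|\lesssim\prod_i(1+|\theta_i|)^{d_\alpha}$ together with the compact support of $\widehat{\phi}_\eps$, and for the outer limit the combination of Lemma~\ref{lem:local_decompose}, the derivative bound on $\widetilde{\bB}_2$ (this is exactly Lemma~\ref{Bbound}), Lemma~\ref{phihatdecay}, and the mollification/cut-off estimates of Lemmas~\ref{lem:truncateU} and~\ref{mollifyU}. The paper simply refers to the computation~\eqref{eq:phi_eps_doublesum} in Lemma~\ref{lem:Y_eps-Y} for the outer step, which is the same argument you spell out.

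The one organizational difference is that the paper decomposes $\widehat{\phi}^{\otimes 2}-\widehat{\phi}_\eps^{\otimes 2}$ directly in two dimensions as $(\widehat{\phi}^{\otimes 2}*\zeta_\eps^{\otimes 2}-\widehat{\phi}_\eps^{\otimes 2})+(\widehat{\phi}^{\otimes 2}-\widehat{\phi}^{\otimes 2}*\zeta_\eps^{\otimes 2})$ and then applies Lemmas~\ref{lem:truncateU} and~\ref{mollifyU} in their tensorised form, whereas you bound $\widehat{\phi}_\eps-\widehat{\phi}$ in one dimension first and then tensor via $\widehat{\phi}_\eps^{\otimes 2}-\widehat{\phi}^{\otimes 2}=\widehat{\phi}_\eps\otimes(\widehat{\phi}_\eps-\widehat{\phi})+(\widehat{\phi}_\eps-\widehat{\phi})\otimes\widehat{\phi}$. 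Your route implicitly uses the submultiplicativity $\|\Upsilon_1\otimes\Upsilon_2\|_{M,\fR_{(K_1,K_2)}}\lesssim\|\Upsilon_1\|_{M,\fR_{K_1}}\|\Upsilon_2\|_{M,\fR_{K_2}}$, which holds but is not stated explicitly; the paper sidesteps this by invoking the tensorised lemmas directly. Also watch the norm indices: because Lemma~\ref{mollifyU} loses one order (it bounds $\|\cdot\|_{M+3}$ in terms of $\|\cdot\|_{M+2}$), you should work at level $M_0+3$ on the $\widetilde{\bB}_2$ side rather than $M_0+2$; this is harmless since the polynomial growth bound on $\partial^{\vec r}\widetilde{\bB}_2$ holds for all $\vec r$.
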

\begin{proof}
    By~\eqref{e:deri_bound},~\eqref{e:L^2_bound},~\eqref{covphi} and $\sum_{u\in\ZZ}\left\langle \Psi_{\alpha,0},\Psi_{\alpha,u}\right\rangle^{d_\alpha}<\infty$, we obtain
        \begin{equation}
            \sum_{k=d}^{\infty}\frac{1}{k!} |{\rho}_{k}(\vec{\theta})| \lesssim (1+|\theta_1|)^{d_\alpha}(1+|\theta_2|)^{d_\alpha}.
        \end{equation}
    By the fact that $\widehat{\phi_\eps}^{\otimes 2}$ is bounded and has compact support together with the dominated convergence theorem, we have
        \begin{equation}
            \lim_{m\rightarrow \infty} \mu_{\eps,m}^2= \lim_{m\rightarrow \infty} \int_{\RR^2} \widehat{\phi}_\eps^{\otimes 2}(\vec{\theta})\sum_{k=d}^{m} \frac{\rho_k(\vec{\theta}) }{k!} d \vec{\theta} = \int_{\RR^2} \widehat{\phi}_\eps^{\otimes 2}(\vec{\theta})\sum_{k=d}^{\infty} \frac{\rho_k(\vec{\theta})}{k!} d \vec{\theta}= \big\langle\widehat{\phi}_\eps^{\otimes 2},\widetilde{\bB}_{2} \big\rangle.
        \end{equation}
    for every $\eps>0$. The proof of $\lim_{\eps\rightarrow 0}  \big\langle\widehat{\phi}_\eps^{\otimes 2},\widetilde{\bB}_{2} \big\rangle = \big\langle\widehat{\phi}^{\otimes 2},\widetilde{\bB}_{2} \big\rangle=\mu^2$ is similar  to~\eqref{eq:phi_eps_doublesum}. This conclude our proof.
\end{proof}
    
\begin{lem}\label{lem:Y_eps-Y_eps_m}
    For every $\eps>0$, we have
        \begin{equation}
            \lim_{m \rightarrow \infty} \sup_{n \ge 1} \| \widetilde{Y}^{\eps}_n-\widetilde{Y}^{\eps,m}_n \|_{L^2(\Omega)}=0.
        \end{equation}
\end{lem}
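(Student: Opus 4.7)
The plan is to reduce the $L^2$ bound to a sum of stationary covariances that is uniform in $n$, and then extract the required vanishing from the tails of a chaos expansion.

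First I would observe that
\[
\widetilde{Y}^{\eps}_n - \widetilde{Y}^{\eps,m}_n = \frac{1}{\sqrt{n}} \sum_{j=1}^N b_j \sum_{u=0}^{[nt_j]-1} \mathcal{T}^{\ge m+1}\phi_\eps(X_u),
\]
and set $C_m(h) := \mathbf{E}[\mathcal{T}^{\ge m+1}\phi_\eps(X_0)\mathcal{T}^{\ge m+1}\phi_\eps(X_h)]$. Expanding the square, using stationarity, and bounding $\sum_{u,u'}|C_m(u'-u)|$ by $n\sum_{h\in\ZZ}|C_m(h)|$ yields
\[
\|\widetilde{Y}^{\eps}_n - \widetilde{Y}^{\eps,m}_n\|_{L^2(\Omega)}^2 \le \Bigl(\sum_{j=1}^N |b_j|\Bigr)^2 \sum_{h \in \ZZ} |C_m(h)|,
\]
which is independent of $n$. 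It therefore suffices to show $\sum_h |C_m(h)| \to 0$ as $m \to \infty$.

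For this I would re-run the argument of Lemma~\ref{e:mu} and Remark~\ref{cov} essentially verbatim, with $\mathcal{T}^{\ge d}$ replaced throughout by $\mathcal{T}^{\ge m+1}$. Using the truncated analogue of Lemma~\ref{TdphiX} and the chaos isometry \eqref{eq:cov1}, I would express $C_m(h)$ as a double integral in $(\theta_1,\theta_2)$ of $\widehat{\phi}_\eps\otimes\widehat{\phi}_\eps$ against the tail chaos inner product. Peeling off $d_\alpha$ coordinates from $\mathbf{E} D^k_{\vec{x}} e^{i\theta X_u}$ exactly as in \eqref{e:deri_bound} produces the spatial factor $\langle \Psi_{\alpha,0}, \Psi_{\alpha,h}\rangle^{d_\alpha}$, after which Cauchy--Schwarz in the remaining $k-d_\alpha$ coordinates and in the index $k>m$ now yields, in place of the constant $1$ obtained in \eqref{e:L^2_bound}, a vanishing factor
\[
\delta_m(\theta) := \|\mathcal{T}^{\ge m+1-d_\alpha} e^{i\theta X_0}\|_{L^2(\Omega)},
\]
because the re-indexed tail $\sum_{k>m}\tfrac{1}{k!}\|\mathbf{E} D^{k-d_\alpha} e^{i\theta X_0}\|_{L^2(\RR^{k-d_\alpha})}^2$ is bounded by $\delta_m(\theta)^2$. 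The outcome is
\[
|C_m(h)| \lesssim \langle \Psi_{\alpha,0}, \Psi_{\alpha,h}\rangle^{d_\alpha} \int\!\!\int |\widehat{\phi}_\eps(\theta_1)\widehat{\phi}_\eps(\theta_2)| (1+|\theta_1|)^{d_\alpha}(1+|\theta_2|)^{d_\alpha} \delta_m(\theta_1)\delta_m(\theta_2)\, d\theta_1 d\theta_2.
\]

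Finally, since $\sum_h \langle \Psi_{\alpha,0},\Psi_{\alpha,h}\rangle^{d_\alpha}<\infty$ by Lemma~\ref{Psiestimate} and the choice $d_\alpha>1/(2\alpha-1)$, summation in $h$ absorbs the spatial decay, so the problem reduces to showing that the Fourier integral above tends to zero. For each fixed $\theta$, $\delta_m(\theta)\to 0$ since $e^{i\theta X_0}\in L^2(\Omega)$ and $\mathcal{T}^{\ge m+1-d_\alpha}$ extracts the tail of its chaos expansion; combined with the compact support of $\widehat{\phi}_\eps$ and the uniform bound $\delta_m(\theta)\le \|e^{i\theta X_0}\|_{L^2}=1$, dominated convergence finishes the proof. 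The step I expect to require the most care is the re-indexed Cauchy--Schwarz bookkeeping that produces $\delta_m$ (rather than $1$), but no genuinely new idea beyond those in Lemma~\ref{e:mu} is needed.
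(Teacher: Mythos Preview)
Your proposal is correct, and the re-indexed Cauchy--Schwarz you flag as the delicate step does go through: with $m\ge d\ge d_\alpha$ the shift $k\mapsto k-d_\alpha$ lands in nonnegative indices, and $(k'+d_\alpha)!\ge k'!$ gives the claimed bound by $\delta_m(\theta)$. The paper, however, takes a shorter route that avoids the $d_\alpha$-peeling entirely. After Minkowski over $j$ and then over $\theta$ (so a single $\theta$-integral rather than your double one), it expands $\|\frac{1}{\sqrt n}\sum_u \mathcal{T}^{\ge m+1}e^{i\theta X_u}\|_{L^2}^2$ by the chaos isometry and bounds each kernel crudely via Lemma~\ref{Dxeithetalem}, namely $|\E D^k_{\vec{x}}e^{i\theta X_u}|\le C^k(1+|\theta|)^k\Psi_{\alpha,u}^{\otimes k}(\vec{x})$, which after summing over $u$ yields the tail of an exponential series $\sum_{k>m}\frac{(C')^{k}(1+|\theta|)^{2k}}{k!}$; this vanishes pointwise in $\theta$ and is dominated by $\exp(C''(1+|\theta|)^2)$, harmless on the compact support of $\widehat{\phi}_\eps$. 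The trade-off is that the paper's exponential bound in $\theta$ is useless against the merely polynomially decaying $\widehat{\phi}$ and genuinely needs the $\eps$-regularisation, whereas your polynomial bound $(1+|\theta|)^{d_\alpha}\delta_m(\theta)$ is morally strong enough to work directly with $\widehat{\phi}$; but since the lemma is stated for $\phi_\eps$ anyway, the paper's more elementary argument suffices.
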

\begin{proof}
    By Minkowski inequality, we have
        \begin{equation}
            \begin{aligned}
                \| \widetilde{Y}^{\eps}_n-\widetilde{Y}^{\eps,m}_n \|_{L^2(\Omega)} &\lesssim  \sum_{j=1}^N  \bigg\|\frac{1}{\sqrt{n}} \sum_{u=0}^{[nt_j]-1} \mathcal{T}^{\ge m+1} \phi_\eps \left(X_u\right)\bigg\|_{L^2(\Omega)}\\
                & \le \sum_{j=1}^N \int_{\RR}  |\widehat{\phi_\eps}(\theta)| \bigg\| \frac{1}{\sqrt{n}} \sum_{u=0}^{[nt_j]-1} \mathcal{T}^{\ge m+1} e^{{\rm i}\theta X_u} \bigg\|_{L^2(\Omega)} d \theta.
            \end{aligned}
        \end{equation}
    The direct chaos expansion and Lemma~\ref{Dxeithetalem} imply that
        \begin{equation*}
            \begin{aligned}
                \mathbf{E}\Big|\frac{1}{\sqrt{n}} \sum_{u=0 }^{[nt_j] - 1} \tT^{\ge m+1} e^{{\rm i}\theta X_u} \Big|^2 &=\sum_{k=m+1}^{\infty} \frac{1}{k ! } \frac{1}{n}\bigg\| \sum_{u=0}^{[nt_j]-1} \mathbf{E} D^k_{\bullet} e^{{\rm i}\theta X_u} \bigg\|_{L^2(\mathbb{R}^k)}^2\\
                &\le\sum_{k=m+1}^{\infty} \frac{C_{1,0}^{2k}(1+|\theta|)^{2k}}{k!} \sum_{u\in\ZZ} \left\langle \Psi_{\alpha,0}, \Psi_{\alpha,u} \right\rangle^k.
            \end{aligned}
        \end{equation*}
    For $k\geq d$, it follows from~\eqref{twoPsiL1} that
        \begin{equation*}
            \begin{aligned}
                \sum_{u\in\ZZ}  \left\langle \Psi_{\alpha,0}, \Psi_{\alpha,u} \right\rangle^k \le C_{\alpha}^k \sum_{u\in\ZZ} (1+|u|)^{k((1-2\alpha) \vee (-\alpha))} \le C_{\alpha}^k \sum_{u\in\ZZ} (1+|u|)^{d((1-2\alpha) \vee (-\alpha))},
            \end{aligned}
        \end{equation*}
    where $C_\alpha$ is the proportionality constant in~\eqref{twoPsiL1}. Then for every $\theta\in\RR$, we have
            \begin{equation*}
            \begin{aligned}
                \lim_{m \rightarrow \infty} \sup_{n \ge 1} \mathbf{E}\Big|\frac{1}{\sqrt{n}} \sum_{u=0 }^{[nt_j] - 1} \tT^{\ge m+1} e^{{\rm i}\theta X_u} \Big|^2 =0
            \end{aligned}
        \end{equation*}
    and
        \begin{equation}
            \sup_{n \ge 1} \mathbf{E}\Big|\frac{1}{\sqrt{n}} \sum_{u=0 }^{[nt_j] - 1} \tT^{\ge m+1} e^{{\rm i}\theta X_u} \Big|^2 \le \big(\sum_{u\in\ZZ} (1+|u|)^{d((1-2\alpha) \vee (-\alpha))}\big) \exp( C_\alpha C_{1,0}^2 (1+|\theta|^2)).
        \end{equation}
    Therefore, the conclusion follows from the fact that $\widehat{\phi_\eps}$ is bounded and has compact support together with the dominated convergence theorem.
\end{proof}
\begin{lem}\label{lem:Y_eps-Y}
    We have
        \begin{equation}
            \lim_{\eps \rightarrow 0} \sup_{n \ge 1} \| \widetilde{Y}^{\eps}_n-\widetilde{Y}_n \|_{L^2(\Omega)}=0.
        \end{equation}
\end{lem}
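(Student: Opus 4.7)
The plan is to follow the same overall strategy as the proof of Theorem~\ref{thm:tightness}: use the Fourier decoupling from Lemma~\ref{TdphiX} to write
\begin{equation*}
    \widetilde{Y}^{\eps}_n - \widetilde{Y}_n = \Big\langle \widehat{\phi_\eps} - \widehat{\phi},\ \frac{1}{\sqrt{n}} \sum_{j=1}^{N} b_j \sum_{u=0}^{[nt_j]-1} \mathcal{T}^{\ge d} e^{{\rm i}\theta X_u} \Big\rangle_{\theta},
\end{equation*}
then apply Lemma~\ref{lem:local_decompose} and Lemma~\ref{lem:exchange} to obtain a bound of the form
\begin{equation*}
    \|\widetilde{Y}^{\eps}_n - \widetilde{Y}_n\|_{L^2(\Omega)} \lesssim \sum_{K\in\ZZ} \|\widehat{\phi_\eps} - \widehat{\phi}\|_{M+2, \fR_K} \sup_{r \le M+3}\sup_{\theta\in\fR_K} \Big\| \partial_\theta^r\, \frac{1}{\sqrt n} \sum_{j=1}^{N} b_j \sum_{u=0}^{[nt_j]-1} \mathcal{T}^{\ge d} e^{{\rm i}\theta X_u} \Big\|_{L^2(\Omega)},
\end{equation*}
so the problem splits into a stochastic factor and a deterministic Fourier factor.

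For the stochastic factor, the argument is essentially identical to the one leading to \eqref{e:TdLp} in the proof of Theorem~\ref{thm:tightness}: Corollary~\ref{cor:multi_sg_ineq} reduces the $L^2(\Omega)$-norm to mixed $L^2$/$L^2$-norms of the $d_\alpha$-th Malliavin derivative of $\mathcal{T}^{\ge d} e^{{\rm i}\theta X_u}$, and Corollary~\ref{DALp} combined with Lemma~\ref{tightnessestamate} gives a bound of the form $(1+|K|)^{d_\alpha}$, uniformly in $n$, $s$, $t$ and hence in the $t_j$'s (note $[nt_j]/n \le 1$).

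For the deterministic factor, recall $\widehat{\phi_\eps}(\theta) = (\widehat{\phi} * \zeta_\eps)(\theta)\,\rho(\eps\theta)$ where $\zeta_\eps(\cdot) = \eps^{-1}\widehat{\rho}(\cdot/\eps)$ is a mollifier. I split
\begin{equation*}
    \widehat{\phi_\eps} - \widehat{\phi} = \big[(\widehat{\phi} * \zeta_\eps) - \widehat{\phi}\big]\rho(\eps\cdot) + \widehat{\phi}\big[\rho(\eps\cdot) - 1\big],
\end{equation*}
estimate the mollification error with Lemma~\ref{mollifyU} and the truncation error with Lemma~\ref{lem:truncateU} (using Leibniz to peel off the smooth multipliers $\rho(\eps\cdot)$, whose derivatives are uniformly bounded for $\eps\in(0,1)$), and feed in the decay $\|\widehat{\phi}\|_{M+2, \fR_L}\lesssim (1+|L|)^{-\gamma_0}$ from Lemma~\ref{phihatdecay}. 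For every $\beta\in(0,1)$ this yields
\begin{equation*}
    \|\widehat{\phi_\eps} - \widehat{\phi}\|_{M+2, \fR_K} \lesssim \eps^{\beta}(1+|K|)^{\beta - \gamma_0}.
\end{equation*}

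Combining the two bounds gives
\begin{equation*}
    \|\widetilde{Y}^{\eps}_n - \widetilde{Y}_n\|_{L^2(\Omega)} \lesssim \eps^{\beta}\sum_{K\in\ZZ}(1+|K|)^{\beta - \gamma_0 + d_\alpha},
\end{equation*}
and the assumption $\gamma_0 > d_\alpha + 1$ lets us pick $\beta$ small enough that $\beta - \gamma_0 + d_\alpha < -1$, so the sum converges and the right-hand side tends to $0$ with $\eps$, uniformly in $n$. The main technical obstacle is matching the regularity indices in Lemmas~\ref{lem:truncateU}, \ref{mollifyU} and \ref{phihatdecay} so that the $K$-sum is absolutely summable with room to spare, i.e.\ checking that one can afford a polynomial loss $(1+|K|)^\beta$ from the truncation while still absorbing the factor $(1+|K|)^{d_\alpha}$ coming from the stochastic estimate; this is precisely the role of the gap $\gamma_0 - d_\alpha - 1 > 0$.
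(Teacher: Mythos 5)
Your proposal is correct and follows essentially the same route as the paper: split the Fourier-side error into a mollification piece (Lemma~\ref{mollifyU}) and a cut-off piece (Lemma~\ref{lem:truncateU}), feed in the decay of Lemma~\ref{phihatdecay}, and bound the stochastic factor uniformly in $n$ by the spectral-gap estimate already recorded in \eqref{e:TdLp}, with $\gamma_0>d_\alpha+1$ absorbing the $(1+|K|)^{d_\alpha}$ loss. The only difference is bookkeeping: you pair the one-dimensional difference $\widehat{\phi_\eps}-\widehat{\phi}$ against the unsquared $L^2(\Omega)$-norm, whereas the paper squares the norm and pairs the two-dimensional tensor difference $\widehat{\phi}^{\otimes 2}-\widehat{\phi_\eps}^{\otimes 2}$ against $\bB_{n,2}$ via Lemma~\ref{Bbound}; your variant is equally valid (and avoids the cross-terms hidden in the tensorized identity).
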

\begin{proof}
    We have $\widehat{\phi_\eps}^{\otimes 2}=(\widehat{\phi}^{\otimes 2}*(\eps^{-2}(\widehat{\rho}^{\otimes 2})(\frac{\cdot}{\eps}))) \rho^{\otimes 2} (\eps \cdot)$. For $M\in\NN$ and $\vec{K}=(K_1,K_2)$, we have
        \begin{equation}
            \begin{aligned}
                &\|\widehat{\phi}^{\otimes 2} - \widehat{\phi_\eps}^{\otimes 2}\|_{M+3, \fR_{\vec{K}}} \\
                & \le \|\widehat{\phi}^{\otimes 2}*(\eps^{-2}(\widehat{\rho}^{\otimes 2})(\frac{\cdot}{\eps})) - \widehat{\phi_\eps}^{\otimes 2}\|_{M+3, \fR_{\vec{K}}}+\|\widehat{\phi}^{\otimes 2} - \widehat{\phi}^{\otimes 2}*(\eps^{-2}(\widehat{\rho}^{\otimes 2})(\frac{\cdot}{\eps}))\|_{M+3, \fR_{\vec{K}}}.
            \end{aligned}
        \end{equation}
     By Lemmas~\ref{lem:truncateU} and~\ref{mollifyU}, we have
        \begin{equation}
            \begin{aligned}
                \|\widehat{\phi}^{\otimes 2}*(\eps^{-2}(\widehat{\rho}^{\otimes 2})(\frac{\cdot}{\eps})) - \widehat{\phi_\eps}^{\otimes 2}\|_{M+3, \fR_{\vec{K}}} &\lesssim \eps^\beta \prod_{i=1}^2 ((1+|K_i|)^\beta \|\widehat{\phi}*(\eps^{-1}\widehat{\rho}(\frac{\cdot}{\eps}))\|_{M+3, \fR_{K_i}}) \\
                &\lesssim \eps^\beta \prod_{i=1}^2 \bigg((1+|K_i|)^\beta \sum_{L_i\in \ZZ} \frac{\|\widehat{\phi}\|_{M+2, \fR_{L_i}}}{(1+|L_i-K_i|)^\lambda}\bigg).
            \end{aligned}
        \end{equation}
    By Lemma~\ref{mollifyU}, we have  
        \begin{equation}
            \|\widehat{\phi}^{\otimes 2} - \widehat{\phi}^{\otimes 2}*(\eps^{-2}(\widehat{\rho}^{\otimes 2})(\frac{\cdot}{\eps}))\|_{M+3, \fR_{\vec{K}}} \lesssim  \eps \sum_{L\in \ZZ^2} \frac{\|\widehat{\phi}^{\otimes 2}\|_{M+2, \fR_{\vec{L}}}}{(1+|\vec{L}-\vec{K}|)^{2\lambda}}
        \end{equation}
     Then by Lemmas~\ref{lem:local_decompose},~\ref{phihatdecay} and ~\ref{lem:exchange}, we have
        \begin{equation}\label{eq:phi_eps_doublesum}
            \begin{aligned}
                \| \widetilde{Y}^{\eps}_n-\widetilde{Y}_n \|_{L^2(\Omega)}^2=&\big|\big \langle \widehat{\phi}^{\otimes 2} - \widehat{\phi_\eps}^{\otimes 2}, {\bB}_{n,2} \big\rangle \big|\\
                \lesssim & \sum_{\vec{K}\in \ZZ^2} \|\widehat{\phi}^{\otimes 2} - \widehat{\phi_\eps}^{\otimes 2}\|_{M+3, \fR_{\vec{K}}} \sup_{\vec{r}\in\NN^N:|\vec{r}|_{\infty} \leq M+3} \sup_{\vec{\theta}\in\fR_{\vec{K}}} |\d^{\vec{r}}_{\vec{\theta}} {\bB}_{n,\ell}(\vec{\theta})|\\
                \lesssim &  \eps^\beta \bigg(\sum_{L,K\in\ZZ} (1+|L|)^{- \gamma_0} (1+|L-K|)^{-\lambda} (1+|K|)^{ d_\alpha+\beta}\bigg)^2 \\
                \quad &+ \eps \bigg(\sum_{L,K\in\ZZ} (1+|L|)^{- \gamma_0} (1+|L-K|)^{-\lambda} (1+|K|)^{ d_\alpha}\bigg)^2,
            \end{aligned}
        \end{equation}
    where in the second inequality we used Lemma~\ref{Bbound}. We fix a sufficiently large $\lambda$ and a sufficiently small $\beta$, then the series in~\eqref{eq:phi_eps_doublesum} converge, which concludes our proof.
\end{proof}
    The following lemma provides the uniform bounds for $\bB_{n,\ell}$ and $ \widetilde{\bB}_{\ell}$ in $n$.
\begin{lem}\label{Bbound}
    For every $\ell\in\NN$, every $\vec{r}\in\NN^\ell$, we have 
    \begin{equation*}
        \sup_{n\ge 1}|\d^{\vec{r}}_{\vec{\theta}}\bB_{n,\ell}(\vec{\theta})|+|\d^{\vec{r}}_{\vec{\theta}}\widetilde{\bB}_{\ell}(\vec{\theta})|\lesssim \prod_{q=1}^\ell(1+|\theta_q|)^{d_\alpha},
    \end{equation*}
    where the proportionality constant is independent of $\vec{\theta}\in\RR^\ell$.
\end{lem}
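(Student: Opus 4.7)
The bound splits into two parts, one for $\bB_{n,\ell}$ and one for $\widetilde\bB_\ell$, and both follow strategies already established in the tightness argument and in the proof of Lemma~\ref{e:mu}, so no genuinely new machinery is required.

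For $\bB_{n,\ell}$, the plan is to push $\d^{\vec r}_{\vec\theta}$ inside the product in~\eqref{eq:fourier} (each $\theta_q$ appears in only one factor), and then apply Hölder's inequality to get
\begin{equation*}
  |\d^{\vec r}_{\vec\theta}\bB_{n,\ell}(\vec\theta)| \le \prod_{q=1}^{\ell} \Bigl\| \frac{1}{\sqrt{n}} \sum_{j=1}^{N} b_j \sum_{u=0}^{[nt_j]-1} \d^{r_q}_{\theta_q} \tT^{\ge d} e^{{\rm i}\theta_q X_u} \Bigr\|_{L^\ell(\Omega)}.
\end{equation*}
Each factor on the right is then controlled by reproducing the chain of estimates already used in the proof of Theorem~\ref{thm:tightness}: apply Corollary~\ref{cor:multi_sg_ineq} with $k=d_\alpha$, invoke Corollary~\ref{DALp} to bound the pointwise Malliavin derivative by $(1+|\theta_q|)^{d_\alpha}\Psi_{\alpha,u}^{\otimes d_\alpha}(\vec x)$, and conclude with Lemma~\ref{tightnessestamate}, which produces the factor $(1+|\theta_q|)^{d_\alpha}$. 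A triangle inequality over the $N$ indices $j$ and multiplying over $q$ yields the claimed bound.

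For $\widetilde\bB_\ell$, the odd case $\ell=2p-1$ is immediate since $\widetilde\bB_{2p-1}\equiv0$ by definition. For $\ell=2p$, I will apply the Leibniz rule to distribute $\d^{\vec r}_{\vec\theta}$ over the product $\prod_{\fB\in\sigma}\sum_{k\ge d}\frac{1}{k!}\rho_k(\vec\theta_{\fB})$, so that it suffices to prove, for every pair $\fB=\{q_1,q_2\}$ and every $(s_1,s_2)\in\NN^2$,
\begin{equation*}
  \Bigl| \d^{s_1}_{\theta_{q_1}}\d^{s_2}_{\theta_{q_2}} \sum_{k\ge d} \frac{\rho_k(\theta_{q_1},\theta_{q_2})}{k!} \Bigr| \lesssim (1+|\theta_{q_1}|)^{d_\alpha}(1+|\theta_{q_2}|)^{d_\alpha}.
\end{equation*}
Recognising the left-hand side (via~\eqref{e:iso}) as $\sum_{u\in\ZZ}\mathbf E\bigl[(\d^{s_1}_{\theta_{q_1}}\tT^{\ge d}e^{{\rm i}\theta_{q_1}X_0})(\d^{s_2}_{\theta_{q_2}}\tT^{\ge d}e^{{\rm i}\theta_{q_2}X_u})\bigr]$, this is exactly the estimate~\eqref{covphi} established during the proof of Lemma~\ref{e:mu}, combined with the summability $\sum_u\langle\Psi_{\alpha,0},\Psi_{\alpha,u}\rangle^{d_\alpha}<\infty$ guaranteed by Lemma~\ref{Psiestimate} and the inequality $(1-2\alpha)d_\alpha<-1$. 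Multiplying the $p$ pair bounds over each $\sigma\in\Pi_{=2}(\vec 1_{2p})$ and summing over the finitely many such $\sigma$ then gives $\prod_{q=1}^{2p}(1+|\theta_q|)^{d_\alpha}$.

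The only mildly delicate point is justifying the term-by-term differentiation of the infinite series $\sum_{k\ge d}\rho_k/k!$ under $\d^{s_1}_{\theta_{q_1}}\d^{s_2}_{\theta_{q_2}}$; this is handled exactly as in the absolute convergence argument in the proof of Lemma~\ref{e:mu}, using the uniform bound~\eqref{e:L^2_bound} and dominated convergence. Overall, the lemma is essentially a bookkeeping consequence of the tightness estimate~\eqref{e:TdLp} and the covariance bound~\eqref{covphi} already proved earlier.
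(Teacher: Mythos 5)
Your proposal is correct and follows essentially the same route as the paper: for $\bB_{n,\ell}$ it is H\"older/Minkowski reducing to the single-frequency bound \eqref{e:TdLp} (which you simply re-derive via Corollary~\ref{cor:multi_sg_ineq}, Corollary~\ref{DALp} and Lemma~\ref{tightnessestamate} instead of citing it), and for $\widetilde{\bB}_{\ell}$ it is the covariance estimate \eqref{covphi} together with the summability of $\langle\Psi_{\alpha,0},\Psi_{\alpha,u}\rangle^{d_\alpha}$, exactly as in the paper's \eqref{eq:rhoksum}.
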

\begin{proof}
     By H\"older and Minkowski inequality, we have
     \begin{equation*}
        \begin{aligned}
            |\d^{\vec{r}}_{\vec{\theta}}\bB_{n,\ell}(\vec{\theta})|\lesssim \prod_{q=1}^{\ell} \Big(\sum_{j=1}^N  \Big\| \frac{1}{\sqrt{n}} \sum_{u=0 }^{[nt_j] - 1} \tT^{\ge d} \d^{r_q}_{\theta_q} e^{{\rm i} \theta_q X_u}\Big\|_{L^{\ell}(\Omega)} \Big) \lesssim \prod_{q=1}^{\ell} (1+|\theta_q|)^{d_\alpha},
            \end{aligned}
     \end{equation*}
     where the last inequality follows from \eqref{e:TdLp}. This completes the proof for $\bB_{n,\ell}$. 
    Recall the definition of $\rho_k$ in~\eqref{eq:rhok}, by~\eqref{covphi}, we obtain
    \begin{equation}\label{eq:rhoksum}
        \begin{aligned}
            \sum_{k=d}^{\infty}\frac{1}{k!} |\d^{r_1}_{\theta_1}\d^{r_2}_{\theta_2}{\rho}_{k}(\theta_1,\theta_2)| &\lesssim (|1+|\theta_1|)^{d_\alpha}(|1+|\theta_2|)^{d_\alpha}\sum_{u\in\ZZ} \left\langle \Psi_{\alpha,0},\Psi_{\alpha,u}\right\rangle^{d_\alpha} \\
            &\lesssim (|1+|\theta_1|)^{d_\alpha}(|1+|\theta_2|)^{d_\alpha},
        \end{aligned}
    \end{equation}
    which completes the proof.
\end{proof}

\subsection{Convergence of \TitleEquation{\bB_{n,\ell,m}}{ bB{n, ell,m}}}
\begin{prop} \label{thm:prod_convergence_truncated}
 For every $m\ge d$, every $\ell\in\NN$ and every $\vec{\theta}\in\RR^\ell$, we have
    \begin{equation}\label{2pmomentconv_trun}
            \lim _{n \rightarrow \infty} \bB_{n,\ell,m}(\vec{\theta}) = \widetilde{\bB}_{\ell,m}(\vec{\theta}).
    \end{equation}
\end{prop}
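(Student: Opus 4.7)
The plan is to apply the diagram formula from Proposition~\ref{wick} to the expectation of the product in the definition of $\bB_{n,\ell,m}$ and then carry out the classical \emph{connected diagram} analysis familiar from the method of moments.

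First I would expand each factor via its chaos decomposition. Setting $h_{k,\theta,u}(\vec{x})\coloneqq \mathbf{E}\,D^{k}_{\vec{x}} e^{{\rm i}\theta X_u}$, equation~\eqref{DxFchaos} yields
\begin{equation*}
\tT^{[d,m]} e^{{\rm i}\theta_q X_{u_q}} \;=\; \sum_{k_q=d}^{m} \frac{1}{k_q!}\, I_{k_q}\bigl(h_{k_q,\theta_q,u_q}\bigr),
\end{equation*}
so that $\bB_{n,\ell,m}(\vec{\theta})$ becomes a finite sum over $\vec{k}\in\{d,\ldots,m\}^{\ell}$, $\vec{j}\in\{1,\ldots,N\}^{\ell}$ and $\vec{u}$ of expectations $\mathbf{E}\prod_q I_{k_q}(h_{k_q,\theta_q,u_q})$. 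Lemma~\ref{lem:Dxe} gives the pointwise bound $|h_{k,\theta,u}(\vec{x})|\lesssim(1+|\theta|)^{k}\,\Psi_{\alpha,u}^{\otimes k}(\vec{x})$, which for each fixed $u$ satisfies the hypothesis of Proposition~\ref{wick} since $\alpha>\tfrac12$. Applying the diagram formula rewrites each such expectation as a sum over $\sigma\in\Pi_{\ge 2}(\vec{k})$ of contracted integrals of $(\otimes_q h_{k_q,\theta_q,u_q})_\sigma$.

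The heart of the proof is the combinatorial analysis of these diagrams. To each $\sigma\in\Pi_{\ge 2}(\vec{k})$ I would associate the multi-hypergraph $H_\sigma$ on the vertex set $\{1,\ldots,\ell\}$ whose hyperedges are the blocks of $\sigma$; since every block has size $\ge 2$ but meets each $J_q$ in at most one element, every connected component of $H_\sigma$ has at least two vertices. The contracted integrand depends on $\vec{u}$ only through differences within connected components of $H_\sigma$, so if $H_\sigma$ has $s$ components then $\sum_{\vec{u}}$ contributes at most $O(n^{s})$. Combined with the $n^{-\ell/2}$ prefactor, only diagrams with $s=\ell/2$ survive in the limit, which forces every component to have exactly two vertices, hence $\sigma\in\Pi_{=2}(\vec{k})$ with outer structure a perfect matching $\sigma_{\text{out}}\in\Pi_{=2}(\vec{1}_{\ell})$. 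For odd $\ell$ this class is empty and $\bB_{n,\ell,m}\to 0=\widetilde{\bB}_{\ell,m}$. For $\ell=2p$, each outer pair $\fB=\{q_1,q_2\}$ forces $k_{q_1}=k_{q_2}=:k_\fB$ and fully matches the $k_\fB$ variables of $J_{q_1}$ with those of $J_{q_2}$; by symmetry of $h_{k,\theta,u}$ in its arguments all $k_\fB!$ inner matchings give the same integral, which cancels one copy of $\frac{1}{k_\fB!}$ from the chaos expansion and produces $\frac{1}{k_\fB!}\rho_{k_\fB}(\vec{\theta}_\fB)$ per pair once the sum over $u$-differences is performed. Finally, for each pair translation invariance yields
\begin{equation*}
\tfrac{1}{n}\sum_{u_{q_1}=0}^{[nt_{j_1}]-1}\sum_{u_{q_2}=0}^{[nt_{j_2}]-1}(\text{integrand})\;\longrightarrow\;\min(t_{j_1},t_{j_2})\cdot \rho_{k_\fB}(\vec{\theta}_\fB),
\end{equation*}
and summing over $j_1,j_2$ with weights $b_{j_1}b_{j_2}$ produces the factor $\var(\sum_{j}b_{j}B_{t_j})$; multiplying over the $p$ outer pairs and summing over $\sigma_{\text{out}}$ reproduces exactly $\widetilde{\bB}_{2p,m}(\vec{\theta})$.

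The step I expect to be the main obstacle is the uniform $O(n^{s})$ bound on $\sum_{\vec{u}}\bigl|\int(\otimes_q h_{k_q,\theta_q,u_q})_\sigma\bigr|$ for general $\sigma$ whose blocks may have size $\ge 3$. Controlling this requires extending the estimates of Lemma~\ref{Psiestimate} to integrals $\int\prod_{q\in\fB}\Psi_{\alpha,u_q}(x)\,dx$ indexed by an arbitrary hyperedge, and then organising these bounds over all hyperedges in a single connected component of $H_\sigma$ so that the resulting sum over the $(c-1)$ free $u$-differences inside a component of size $c$ is summable. A degree count in $H_\sigma$ shows that along each independent difference the effective decay is at least $(1+|v|)^{d_\alpha(1-2\alpha)}$, which is summable by Remark~\ref{cov}; the bound should then follow from Lemma~\ref{Psiestimate} together with iterated Young-type convolution inequalities. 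Once this connected-diagram estimate is in place, the identification of the main terms, the symmetry-based reduction of inner matchings, and the Riemann-sum convergence of the residual $\vec{u}$-sum to the Brownian covariance all proceed in direct analogy to the Gaussian Breuer--Major calculation in~\cite{breuer_clt}.
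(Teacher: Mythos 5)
Your overall strategy (chaos expansion, the diagram formula of Proposition~\ref{wick}, isolation of the perfect-matching diagrams as main terms, the $k_\fB!$ count of inner matchings, and the Riemann-sum limit producing $(t_{j_1}\wedge t_{j_2})\rho_{k_\fB}$ and the factor $\var\sum_j b_jB_{t_j}$) coincides with the paper's proof, and that part of the proposal is sound. The gap is in your treatment of the remainder. The claim that a diagram whose hypergraph $H_\sigma$ has $s$ connected components contributes at most $O(n^{s})$, and the supporting ``degree count'' asserting decay at least $(1+|v|)^{d_\alpha(1-2\alpha)}$ along each independent difference, are both false in general. Two vertices of a component may be linked by a \emph{single} contraction (e.g.\ blocks of size two forming a path or a cycle through several $J_q$'s), in which case the only decay in that difference is $(1+|v|)^{(1-2\alpha)\vee(-\alpha)}$, which is not summable for $\alpha<1$. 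Concretely, for $\ell=4$, $k_q=d=2$ and the $4$-cycle pairing $\{1a,2a\},\{2b,3a\},\{3b,4a\},\{4b,1b\}$, a power count gives a contribution of order $n^{4+4(1-2\alpha)}$, which exceeds $O(n^{s})=O(n)$ (it is only $o(n^{2})$ because $\alpha>\tfrac34$). So naive connectivity counting does not close the argument; one needs the genuinely finer estimate that \emph{irregular} diagrams are $o(n^{\ell/2})$, and this is exactly the delicate point in \cite{breuer_clt}.

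The paper handles this differently: it bounds $S_\sigma$ by $T_\sigma$ via Lemma~\ref{Dxeithetalem}, then uses Lemmas~\ref{riJcontrol} and~\ref{lem:error_control} to dominate every irregular $\sigma\in\Pi_{\ge2}(\vec k)$ (including blocks of size $\ge 3$, which have no Gaussian counterpart) by a finite sum over irregular \emph{pair} partitions $\sigma'\in\Pi_{=2}(\vec{k'})$ with $k_q\le k'_q\le 2k_q$, and finally invokes the Breuer--Major estimate (Proposition~\ref{irregularsigma}), whose hypothesis $\sum_u|R_2((0,u))|^{d}<\infty$ follows from \eqref{twoPsiL1} and $\alpha>\tfrac12+\tfrac1{2d}$. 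Your closing appeal to ``direct analogy with \cite{breuer_clt}'' would be acceptable only if you replaced the $O(n^{s})$ connectivity bound by this reduction (or by reproving the Breuer--Major irregular-diagram lemma in the present setting); as written, the step that kills all non-matching diagrams, including the odd-$\ell$ case, does not hold.
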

First we introduce some notations. Define
    \begin{equation*}
        \kK_{\ell} \coloneqq ([d,m]\cap\ZZ)^\ell, \qquad \jJ_{\ell} = ([1,N]\cap\ZZ)^\ell.
    \end{equation*}
    For $\vec{j}=(j_1,\dots,j_\ell)\in\jJ_{\ell}$, we define $\uU_{\vec{j}}\subset\ZZ^{\ell}$ by 
    \begin{equation*}
        \uU_{\vec{j}} = \prod_{k=1}^{\ell} \left([0,[nt_{j_{k}}]-1]\cap\ZZ\right).
    \end{equation*}
    By the direct chaos expansion, we have
        \begin{equation*}
             \bB_{n,\ell,m}(\vec{\theta}) = n^{-\frac{\ell}{2}} \sum_{\substack{\vec{k}\in\kK_{\ell}\\ \vec{j}\in\jJ_{\ell}}}\frac{b_{j_1} \cdots b_{j_{\ell}}}{k_1 ! \cdots k_{\ell} !} \sum_{\vec{u}\in\uU_{\vec{j}}} \E \bigg[ \prod_{q=1}^{\ell} \iI_{k_q} \big( D^{k_q}_{\bullet} e^{i \theta_q X_{u_q}} \big) \bigg]\;.
        \end{equation*}
    For $\vec{k} \in \kK_L$, $\vec{j} \in \jJ_L$ and $\sigma \in \Pi_{\geq 2}(\vec{k})$, writing
    \begin{equation*}
        S_{\sigma} (n, \vec{j}) = n^{-\frac{\ell}{2}} \sum_{\vec{u}\in\uU_{\vec{j}}} \int_{\RR^{|\sigma|}} \bigg( \bigotimes_{q=1}^{\ell} \E D^{k_q}_{\bullet} e^{i \theta_q X_{u_q}} \bigg)_{\sigma}\;,
    \end{equation*}
    By Proposition~\ref{wick}, we have
    \begin{equation}\label{eq:B_nlm}
        \bB_{n,\ell,m}(\vec{\theta}) = \sum_{\substack{\vec{k}\in\kK_{\ell}\\ \vec{j}\in\jJ_{\ell}}}   \frac{b_{j_1} \cdots b_{j_L}}{k_1 ! \cdots k_L !} \sum_{\sigma \in \Pi_{\geq 2}(\vec{k})} S_\sigma (n, \vec{j})\;.
    \end{equation}
    The dependence of $\sS_\sigma$ on $\vec{k}$ is contained in $\sigma$, and its dependence on $n$ and $\vec{j}$ is through the range of summation of $\vec{u}$. 

    The proof of Proposition~\ref{thm:prod_convergence_truncated} closely follows the methodology in \cite{breuer_clt}, which proves the convergence in Gaussian case. We decompose the summation of $\sigma$ over $\Pi_{\geq 2}(\vec{k})$ into a main part $B_p$ and a remainder $A_p$. To control the remainder part, we use Lemma~\ref{Dxeithetalem} to control $S_\sigma (n, \vec{j})$ by
    \begin{equation*}
        T_{\sigma} (n, \vec{j}) = n^{-\frac{\ell}{2}} \sum_{\vec{u}\in\uU_{\vec{j}}} \int_{\RR^{|\sigma|}} \bigg( \bigotimes_{q=1}^{\ell}   \Psi_{\alpha,u_q}^{\otimes k_q} \bigg)_{\sigma}\;.
    \end{equation*}
    
    In the Gaussian case, the remainder involves only $\sigma\in\Pi_{=2}$ due to Wick's formula. However, in the Poisson case, additional terms introduced by $\sigma\in\Pi_{\geq2}$ emerge in $A_p$. Fortunately, these extra terms can be controlled by the remainder part in the Gaussian case (see Lemma~\ref{lem:error_control}).
    
    Before providing the proof, we recall the result for the remainder part in \cite{breuer_clt}. To this end, we introduce some notations. Recall the notations defined at the beginning of Section~\ref{sec:fdd_convergence}. Let $\ell\in \mathbb{N}$ and $\vec{a}\coloneqq(a_1,\cdots,a_{\ell})\in\NN^\ell$. We call a partition $\sigma \in \Pi_{= 2}(\vec{a})$ regular if $\ell$ is even, and there exists $\widetilde{\sigma} \in \Pi_{= 2}(\vec{1}_\ell)$ such that $\widetilde{\sigma}=\{\{i_k,j_k\}|k=1,\dots,\frac{\ell}{2}\}$ with $|J_{i_k}|=|J_{j_k}|$, and additionally, for $\fB \in \sigma$, there exists $k\in\{1,2,\dots,\frac {\ell}{2}\}$ such that $\left|\fB \cap J_{i_k}\right| = \left|\fB \cap J_{j_k}\right| = 1$.

    For $k\in\NN$, we define the function $R_k$ on $\ZZ^k$ by 
    \begin{equation}\label{R(u_J)}
        R_k\big( (u_1,\dots,u_k) \big) = \int_{\RR} \Psi_{\alpha,u_1}(x)\cdots\Psi_{\alpha,u_k}(x) \, dx.
    \end{equation}
     With this definition and~\eqref{eq:f_sigma}, we have
    \begin{equation*}
        T_{\sigma}(n,\vec{j})=n^{-\frac{{\ell}}{2}} \sum_{\vec{u}\in\uU_{\vec{j}}}\prod_{\fB \in \sigma} R_{|\fB|}( \vec{u}_{\fB})
    \end{equation*}
    for $\sigma\in\Pi_{\ge 2}(\vec{k})$ and $\vec{j}\in\jJ_\ell$, where $\vec{u}_{\fB}=(u_q)_{q\in\fB}$. Here, $R_{|\fB|}(\vec{u}_{\fB})$ is well defined since $R_{|\fB|}$ is symmetric. 
    
    For irregular $\sigma\in\Pi_{=2}(\vec{k})$, \cite[Proposition]{breuer_clt} \label{irregularsigma} directly implies the following result.
\begin{proposition}
    For every $\vec{k}\in\kK_\ell$, every $\vec{j}\in\jJ_\ell$ and every irregular $\sigma \in \Pi_{= 2}(\vec{k})$, we have $\lim\limits_{n \rightarrow \infty} T_{\sigma}(n,\vec{j})=0$.
\end{proposition}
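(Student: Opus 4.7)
The excerpt flags this proposition as directly implied by the Breuer--Major estimate, so the plan is to translate $T_\sigma(n,\vec j)$ into that framework. First, by translation invariance of Lebesgue measure, $R_2(u_1,u_2)=r(u_2-u_1)$ with $r(v):=\int_\RR\Psi_\alpha(x)\Psi_\alpha(x-v)\,dx$. Lemma~\ref{Psiestimate} gives $|r(v)|\lesssim(1+|v|)^{-\gamma}$ with $\gamma:=(2\alpha-1)\wedge\alpha$, and the standing hypothesis $d\ge d_\alpha$ yields $d\gamma>1$, so $r\in\ell^d(\ZZ)$; this is precisely the summability required to run the Breuer--Major argument.

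Next, encode $\sigma$ by the multigraph $G_\sigma$ on vertex set $\{1,\ldots,\ell\}$ with edge multiplicity $m_{q_1,q_2}:=\#\{\fB\in\sigma:|\fB\cap J_{q_1}|=|\fB\cap J_{q_2}|=1\}$. Since $\sigma\in\Pi(\vec k)$, $G_\sigma$ has no self-loops, and the total degree at vertex $q$ equals $k_q\ge d$. In this language,
\begin{equation*}
  T_\sigma(n,\vec j)=n^{-\ell/2}\sum_{\vec u}\prod_{q_1<q_2}r(u_{q_2}-u_{q_1})^{m_{q_1,q_2}},
\end{equation*}
and unpacking the regularity definition shows that $\sigma$ is regular exactly when each connected component of $G_\sigma$ consists of two vertices joined by all their edges; equivalently, $\sigma$ is irregular if and only if $c(G_\sigma)<\ell/2$.

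Finally, apply the graph-counting estimate: the sum factorizes over components of $G_\sigma$, and for each component $V\subset\{1,\ldots,\ell\}$, fixing a base vertex and translating the inner variables yields a factor $O(n)$ from the base together with the constant
\begin{equation*}
  K_V:=\sum_{\vec v\in\ZZ^{|V|-1}}\prod_{e\in E_V}|r(v_{q_1(e)}-v_{q_2(e)})|^{m_e}.
\end{equation*}
Multiplying over components produces $|T_\sigma(n,\vec j)|\lesssim n^{c(G_\sigma)-\ell/2}$, which tends to zero for irregular $\sigma$. The main technical obstacle is verifying $K_V<\infty$ when a component contains cycles and individual multiplicities $m_e$ may be as small as $1$, while only the aggregate degrees $k_q\ge d$ are guaranteed. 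I would resolve this by summing out one vertex at a time along a spanning tree of $V$, applying at each vertex $q$ the H\"older inequality with exponents $p_e=k_q/m_e$ (so that $\sum_{e\ni q}1/p_e=1$) to obtain the uniform bound $\sum_v\prod_{e\ni q}|r(v-w_e)|^{m_e}\le\|r\|_{\ell^{k_q}}^{k_q}<\infty$, which is finite thanks to $k_q\gamma\ge d\gamma>1$. This redistribution is essentially the content of the Breuer--Major diagram counting lemma invoked here.
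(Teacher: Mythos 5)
Your opening reduction (namely that $r(v):=R_2((0,v))$ satisfies $\sum_{v\in\ZZ}|r(v)|^{d}<\infty$ because $\alpha>\frac12+\frac1{2d}$) is exactly the hypothesis check that constitutes the paper's entire proof; the paper then delegates everything else to the Proposition of Breuer--Major, whose conclusion for irregular diagrams is only $T_\sigma(n,\vec j)\to 0$. You instead attempt to reprove that graph lemma, and the key step fails. The claim $K_V<\infty$, justified by iterated H\"older along a spanning tree, is not correct: the H\"older exponents $p_e=k_q/m_e$ satisfy $\sum_{e\ni q}1/p_e=1$ only when \emph{all} edges incident to $q$ are still present, which is the case only for the first vertex you sum out. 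After that vertex is removed, its incident edge factors are consumed, so the surviving total multiplicity at a later vertex can be strictly smaller than $d$, and the bound $\|r\|_{\ell^{k_{q'}}}^{k_{q'}}$ is no longer available.

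Moreover $K_V$ is genuinely infinite in admissible regimes, so no rearrangement of the argument can rescue the bound $|T_\sigma(n,\vec j)|\lesssim n^{c(G_\sigma)-\ell/2}$. Take $d=2$, $\alpha\in(\frac34,\frac56)$, $\ell=3$, $k_1=k_2=k_3=2$, and the ``triangle'' pairing with one block joining each pair $J_{q_1},J_{q_2}$ (irregular since $\ell$ is odd; the same component also occurs inside even-order irregular diagrams). Then every $m_e=1$, and by the two-sided bound \eqref{twoPsiL1} one has $r(v)\asymp(1+|v|)^{-(2\alpha-1)}$ with $2\alpha-1\in(\frac12,\frac23)$, so $K_V=\sum_{v,w\in\ZZ}r(v)\,r(w)\,r(v-w)=\infty$: the dyadic region $|v|\sim|w|\sim|v-w|\sim R$ already contributes $\gtrsim R^{2-3(2\alpha-1)}$, which diverges. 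The Proposition is nevertheless true there, since $n^{-3/2}\sum_{0\le u_1,u_2,u_3<n}r(u_1-u_2)r(u_2-u_3)r(u_1-u_3)\lesssim n^{3/2-3(2\alpha-1)}\to0$; but the correct mechanism is that an irregular component contributes $o(n^{|V|/2})$, not $O(n)$ times a finite constant. So your proposal needs either the citation the paper uses or a genuine reproduction of the Breuer--Major counting argument (for instance, splitting $r$ into a compactly supported piece plus a piece of small $\ell^{d}$ norm before doing the power counting); as written, the concluding estimate is stronger than what $r\in\ell^{d}(\ZZ)$ can deliver, and its proof has a real gap.
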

\begin{proof}
    By \cite[Proposition]{breuer_clt} \label{irregularsigma}, it suffices to show $\sum_{u\in\ZZ}|R_2 \left( (0,u) \right)|^{\min\limits_{1\leq j\leq \ell}k_j} <\infty$. This condition follows from~\eqref{twoPsiL1} since $\alpha>\frac{1}{2}+\frac{1}{2d}$ and $\min\limits_{1\leq j \leq \ell} k_j \geq d$.
\end{proof}

  The following lemmas show that $T_\sigma(n,\vec{j})$ with irregular $\sigma\in\Pi_{\geq2}(\vec{k})$ is controlled by the sum of $T_{\sigma'}(n,\vec{j})$ with irregular $\sigma'\in\Pi_{=2}(\vec{k'})$ for some $\vec{k'}\in\NN^\ell$.

\begin{lem}\label{riJcontrol}
If $k$ is even, then for every $ (u_1,\dots,u_k)\in \ZZ^k$ we have \begin{equation}\label{meven}
             R_k \big( (u_1,\dots,u_k) \big)\lesssim \prod_{j=1}^{\frac{k}{2}}R_2 ((u_{2j-1},u_{2j})).
        \end{equation}
If $k\ge 3$ is odd, for every $ (u_1,\dots,u_k)\in \ZZ^k$ we have \begin{equation}\label{modd}
      \begin{aligned}
            R_k\big( (u_1,\dots,u_k) \big)\lesssim & \big(R_2\big( (u_{2k-2},u_{2k-1})\big)R_2\big( (u_{2k-2},u_{2k})\big) + R_2\big( (u_{2k-2},u_{2k-1})\big) R_2\big( (u_{2k-1},u_{2k})\big)\\
            &+ R_2\big( (u_{2k-2},u_{2k})\big) R_2\big( (u_{2k-1},u_{2k})\big)\big) \prod_{j=1}^{\frac{k-3}{2}}R_2((u_{2j-1},u_{2j})).
    \end{aligned}
\end{equation}
Furthermore, the proportionality constants are independent of $ (u_1,\dots,u_k)$.
\end{lem}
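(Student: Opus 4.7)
The plan is to apply Hölder's inequality with equal exponents to split the single integral defining $R_k$ into a product of simpler integrals, each of which is then bounded using the $L^1$ estimates $\eqref{twoPsiL1}$ and $\eqref{threePsiL1}$ from Lemma~\ref{Psiestimate}. (I read the indices $u_{2k-2}, u_{2k-1}, u_{2k}$ appearing in the statement as a typo for $u_{k-2}, u_{k-1}, u_k$, i.e.\ the last three arguments; this is the only way the display is consistent with the $k$-tuple on the left.)

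For the even case $k = 2m$, I would first regroup
\begin{equation*}
    R_k\big( (u_1,\dots,u_k) \big) = \int_\RR \prod_{j=1}^{m} \Psi_{\alpha,u_{2j-1}}(x)\Psi_{\alpha,u_{2j}}(x)\,dx,
\end{equation*}
and then apply Hölder's inequality with $m$ equal conjugate exponents $m$. Since $(\Psi_{\alpha,u})^{m} = \Psi_{\alpha m, u}$, each factor becomes
\begin{equation*}
    \|\Psi_{\alpha,u_{2j-1}}\Psi_{\alpha,u_{2j}}\|_{L^{m}(\RR)} = \bigl(\textstyle\int \Psi_{\alpha m,u_{2j-1}}\Psi_{\alpha m,u_{2j}}\bigr)^{1/m},
\end{equation*}
and $\alpha m \ge \alpha > \tfrac12$, so $\eqref{twoPsiL1}$ applies with $\gamma = \alpha m$ and gives a bound of the form $(1+|u_{2j-1}-u_{2j}|)^{[(1-2\alpha m)\vee(-\alpha m)]/m}$. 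A quick comparison shows the exponent equals $(1/m - 2\alpha)\vee(-\alpha)$, which is no larger than $(1-2\alpha)\vee(-\alpha)$ since $m\ge 1$ forces $1/m \le 1$. Because $1+|u_{2j-1}-u_{2j}|\ge 1$, raising to the smaller exponent only enlarges the quantity, so each factor is $\lesssim R_2((u_{2j-1},u_{2j}))$, yielding \eqref{meven}.

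For the odd case $k\ge 3$, I split off the last three variables and pair the rest:
\begin{equation*}
    R_k = \int_\RR \Bigl(\prod_{j=1}^{(k-3)/2} \Psi_{\alpha,u_{2j-1}}\Psi_{\alpha,u_{2j}}\Bigr)\bigl(\Psi_{\alpha,u_{k-2}}\Psi_{\alpha,u_{k-1}}\Psi_{\alpha,u_k}\bigr)\,dx,
\end{equation*}
and apply Hölder with $(k-1)/2$ equal exponents. Each paired factor is handled exactly as in the even case (now with the exponent $\gamma = \alpha(k-1)/2$), contributing $R_2((u_{2j-1},u_{2j}))$. The triple factor equals $\bigl(\int \Psi_{\gamma,u_{k-2}}\Psi_{\gamma,u_{k-1}}\Psi_{\gamma,u_k}\bigr)^{2/(k-1)}$; applying $\eqref{threePsiL1}$ to the inner integral and then using $\eqref{twoPsiL1}$ on each of the three pairwise $L^1$ norms that appear on the right-hand side of $\eqref{threePsiL1}$ produces precisely the three symmetric products displayed in $\eqref{modd}$, with exponents that, after the $2/(k-1)$ root, are again no larger than $(1-2\alpha)\vee(-\alpha)$ by the same calculation as before.

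There is no serious obstacle, only bookkeeping; the one point that must be verified is the exponent matching $(2/\ell - 2\alpha)\vee(-\alpha) \le (1-2\alpha)\vee(-\alpha)$ for $\ell\ge 2$, which follows from $2/\ell \le 1$ (active branch) or is automatic (other branch). The constants coming out of Hölder depend only on $k$ and $\alpha$, not on the $u_j$, which gives the claimed uniform proportionality.
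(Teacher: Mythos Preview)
Your argument is correct, and your reading of the indices is right. The approach, however, differs from the paper's. The paper does not use H\"older at all: for the even case it simply pulls out all but one pair in $L^\infty$ via the inequality $\|\Psi_{\alpha,i}\Psi_{\alpha,j}\|_{L^\infty(\RR)} \lesssim \|\Psi_{\alpha,i}\Psi_{\alpha,j}\|_{L^1(\RR)} = R_2((i,j))$, which is immediate from \eqref{twoPsiLinfty} together with the \emph{lower} bound in \eqref{twoPsiL1} (since $-\alpha \le (1-2\alpha)\vee(-\alpha)$); one pair remains under the integral and gives the last $R_2$. For the odd case the same $L^\infty$ extraction is applied to all the pairs, and the remaining triple integral is bounded directly by \eqref{threePsiL1} at the original exponent $\gamma = \alpha$.

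Your route via H\"older with equal exponents, followed by \eqref{twoPsiL1}/\eqref{threePsiL1} at the boosted exponent $\gamma = \alpha m$ (resp.\ $\alpha(k-1)/2$) and the exponent comparison $(1/m - 2\alpha)\vee(-\alpha) \le (1-2\alpha)\vee(-\alpha)$, also works and gives the same conclusion; note that the side condition $\gamma \neq 1$ in Lemma~\ref{Psiestimate} is automatically satisfied under the standing assumption $\alpha \in (\tfrac12,1)\cup(1,\infty)$, since $\alpha m = 1$ with $m \in \NN^+$ would force $\alpha = 1$ (even case), and $\alpha(k-1)/2 = 1$ with $k$ odd, $k \ge 3$, would force $\alpha = 1$ or $\alpha \le 1/2$ (odd case). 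The paper's proof is a one-liner because the $L^\infty$-to-$L^1$ comparison avoids the entire exponent-matching calculation; your proof trades that for a bit more bookkeeping but has the minor advantage of not needing \eqref{twoPsiLinfty} at all.
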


\begin{proof}
    \eqref{meven} is derived from the fact $\| \Psi_{\alpha,u} \Psi_{\alpha,j}\|_{L^\infty} \lesssim \| \Psi_{\alpha,u} \Psi_{\alpha,j}\|_{L^1}$, which is a direct consequence of~\eqref{twoPsiLinfty} and~\eqref{twoPsiL1}. The proof of \eqref{modd} is similar except for controlling $R_3$ by \eqref{threePsiL1}.
\end{proof}

\begin{lem}\label{lem:error_control}
    Let $\ell\in\NN$ and $\vec{k} = (k_1,\dots,k_\ell)\in\kK_{\ell}$. For every irregular $\sigma\in\Pi_{\geq2}(\vec{k})$ and $\vec{u}\in\ZZ^{\ell}$, we have
    \begin{equation*}
        \prod_{\fB \in \sigma} R_{|\fB|}( \vec{u}_{\fB}) \lesssim\sum_{k'_1=k_1}^{2 k_1}\dots\sum_{k'_{\ell}=k_{\ell}}^{2k_{\ell}} \sum_{\substack{\sigma' \in \Pi_{=2}(\vec{k'})\\\sigma'\text{ irregular}}} \prod_{\fB \in \sigma'} R_{|\fB|}( \vec{u}_{\fB}),
    \end{equation*}
    where $\vec{k'}=(k'_1,\dots,k'_{\ell})$, and the proportionality constant is independent of $\vec{u}\in\ZZ^{\ell}$.
\end{lem}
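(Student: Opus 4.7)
The plan is to apply Lemma~\ref{riJcontrol} blockwise to each block of $\sigma$ of size at least $3$, thereby reducing $\prod_{\fB \in \sigma} R_{|\fB|}(\vec{u}_{\fB})$ to a sum of products of $R_2$ factors, each of which can be reinterpreted as $\prod_{\fB' \in \sigma'} R_2(\vec{u}_{\fB'})$ for some $\sigma' \in \Pi_{=2}(\vec{k'})$ on an inflated ground set. The main obstacle is tracking the irregularity of $\sigma'$ through the splitting.

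First I would dispatch the trivial case $\sigma \in \Pi_{=2}(\vec{k})$: by hypothesis $\sigma$ is already irregular in the classical sense, and it suffices to take $\vec{k'} = \vec{k}$ and $\sigma' = \sigma$. Thus we may assume $\sigma$ contains at least one block $\fB_0$ of size $\geq 3$. For each such block, with positions coming from groups $q_1, \dots, q_s$ (all distinct by the definition of $\Pi(\vec{k})$), I apply Lemma~\ref{riJcontrol}. If $s$ is even, \eqref{meven} yields one product of $s/2$ pair factors, and the symmetry of $R_s$ in its arguments allows me to realize any pairing of the $q_i$'s. If $s$ is odd, \eqref{modd} produces a sum of three terms, each of which is a product of $(s-3)/2$ ``sequential'' pair factors together with two ``triple'' pair factors that share a common group.

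After applying the splitting to every large block of $\sigma$, the resulting upper bound expands into a finite sum of products of $R_2$ factors. Each summand is exactly $\prod_{\fB' \in \sigma'} R_2(\vec{u}_{\fB'})$ for some $\sigma' \in \Pi_{=2}(\vec{k'})$, where $k'_q$ equals $k_q$ plus the number of times an index from $J_q$ was duplicated in the odd-size triple splittings. Since each block of $\sigma$ meets $J_q$ in at most one position and only odd blocks cause a duplication, $k'_q - k_q$ is at most the number of odd-size blocks of $\sigma$ meeting $J_q$, which is bounded by $k_q$; hence $k'_q \in [k_q, 2k_q]$ as required.

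The principal difficulty is to verify that $\sigma'$ is irregular in $\Pi_{=2}(\vec{k'})$. Odd-size blocks handle themselves cleanly: the two pair factors sharing a group $q_a$ give rise to two blocks of $\sigma'$ that pair $q_a$ with two different groups, which is incompatible with any global matching of the $\ell$ groups. For even-size blocks $\fB_0$ of size $\geq 4$, the freedom of pairing inside $\fB_0$ allows me to choose the splitting so as to break consistency with any matching compatible with the remaining blocks; the delicate combinatorial point is to ensure such a choice always exists given that $\sigma$ is irregular in $\Pi_{\geq 2}(\vec{k})$. I expect the bulk of the proof to reduce to this purely combinatorial statement — that an irregular partition whose large blocks are all of even size admits, via the symmetry of the even-block pairings, at least one decomposition into pairs that is not globally compatible with any matching of the $\ell$ groups.
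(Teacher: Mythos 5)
Your outline coincides with the paper's proof: apply Lemma~\ref{riJcontrol} block by block, let only the odd blocks create duplicated indices (which is what gives the budget $k_q\le k'_q\le 2k_q$), and observe that the two pair factors produced by an odd block, sharing the duplicated group, force irregularity of the refined partition. All of that matches. The problem is the part you explicitly postpone: for even blocks of size $\ge 4$ you only assert that ``the freedom of pairing inside $\fB_0$'' should allow an irregular refinement and that you ``expect the bulk of the proof to reduce to this purely combinatorial statement''. That statement is exactly where the content of the lemma lies, and you do not prove it, so the proposal has a genuine gap. The paper's argument at this point is short but concrete: if $\sigma$ is irregular and all its blocks have even size, it extracts two distinct blocks $\fB_1\neq\fB_2$ and three distinct groups $q_1,q_2,q_3$ with $\fB_1$ meeting $J_{q_1},J_{q_2}$ and $\fB_2$ meeting $J_{q_2},J_{q_3}$; applying \eqref{meven} so that the $J_{q_1}$--$J_{q_2}$ pair inside $\fB_1$ and the $J_{q_2}$--$J_{q_3}$ pair inside $\fB_2$ are kept as blocks, and splitting all remaining even blocks arbitrarily, produces $\sigma''\in\Pi_{=2}(\vec{k})$ which cannot be regular, because a compatible matching of the groups would have to pair $q_2$ with both $q_1$ and $q_3$.

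Be aware, moreover, that the combinatorial claim you plan to prove is false as you state it, so it cannot simply be taken for granted: let $\sigma$ contain one block $\fB_0$ of size $4$ whose four groups are singletons ($k_q=1$, admissible when $d=1$) and are met by no other block, the remaining blocks being pairs compatible with a matching of the other groups. This $\sigma$ is irregular (it is not in $\Pi_{=2}(\vec k)$), yet every plain pairing of $\fB_0$ yields a regular refinement, and since you allow duplication only for odd blocks you have no further moves. Covering this configuration requires a chain-type bound on $R_{|\fB_0|}$ with duplicated interior indices (still within the $k'_q\le 2k_q$ budget), in the spirit of \eqref{threePsiL1} and \eqref{modd}. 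Note that the paper's own even-case sentence (``by the definition of irregular partition there exist $\fB_1\ne\fB_2$\dots'') is justified only when every group met by a large block is also met by another block --- automatically true when $d\ge 2$ --- so this corner case deserves explicit treatment in any complete write-up; in particular, your deferred step is not a routine verification but the place where the argument can actually fail.
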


\begin{proof}
    If every block of $\sigma$ has an even cardinality, then by the definition of irregular partition, there exists $\{q_1, q_2, q_3\} \subset\{1,2, \dots,\ell\}$ and $\fB_1\neq \fB_2 \in \sigma$ such that $\fB_1 \cap J_{q_1}\neq\varnothing, \fB_1 \cap J_{q_2}\neq\varnothing$ and $\fB_2 \cap J_{q_2}\neq\varnothing, \fB_2 \cap J_{q_3}\neq\varnothing$. By~\eqref{meven}, we can assume $|\fB_1|=|\fB_2|=2$, and then there exists $\sigma' \in \Pi_{=2}(k_1,\dots,k_{q_1}-1,\dots,k_{q_2}-2,\dots,k_{q_3}-1,\dots,k_{\ell})$ such that 
    \[
    \prod_{\fB \in \sigma}R_{|\fB|}(\vec{u}_\fB) \lesssim R_2(\vec{u}_{\fB_1})R_2(\vec{u}_{\fB_2})\prod_{\fB \in \sigma'}R_{|\fB|}(\vec{u}_{\fB}) = \prod_{\fB \in \sigma''}R_{|\fB|}(\vec{u}_{\fB}),
    \]
    where $\sigma''=\sigma'\cup \{\fB_1\}\cup \{\fB_2\} \in \Pi_{=2}(\vec{k})$ is not regular.

    If there exists $\bar\fB \in \sigma$ with odd cardinality, then we can choose $\sigma'\in \Pi_{\ge 2}(\vec{k})$ such that $\bar\fB=\cup_{z=1}^{\frac{|\fB|-1}{2}} \fB_z$, where $\fB_z\in \sigma'$ with $|\fB_z|=2$ for $z=1,2,\dots,\frac{|J|-3}{2}$ and $|\fB_{\frac{|J|-1}{2}}|=3$. Assume $\fB_{\frac{|J|-1}{2}} \cap J_{q_j}\neq\varnothing$ for $j = 4,5,6$.  By~\eqref{modd}, we get
    \[
    \prod_{\fB \in \sigma}R_{|\fB|}(\vec{u}_\fB) \lesssim  \sum_{j=4}^{6}\prod_{\fB\in \sigma'_j} R_{|\fB|}(\vec{u}_\fB),
    \]
    where $\sigma'_j\in \Pi_{\ge2}(a_1,\dots,a_{q_j}+1,\dots,a_{\ell})$ is not regular. From this operation, the number of the blocks of $\sigma_i'$ with odd cardinality strictly decreases. Repeating this operation until every block in the newest partition contains even number elements. Finally, applying \eqref{meven} to the blocks with more that two elements, we obtain  
    \[
    \prod_{\fB \in \sigma}R_{|\fB|}(\vec{u}_\fB) \lesssim  \sum_{k'_1=k_1}^{2k_1}\dots\sum_{k'_{\ell}=k_{\ell}}^{2k_{\ell}} \sum_{\substack{\sigma' \in \Pi_{=2}(\vec{k'})\\\sigma'\text{ irregular}}} \prod_{\fB\in \sigma'} R_{|\fB|}(\vec{u}_\fB).
    \]
    This concludes the proof.
\end{proof}

Now we are prepared to prove Proposition~\ref{thm:prod_convergence_truncated}. 

\begin{proof}[Proof of Proposition~\ref{thm:prod_convergence_truncated}]

    We first prove~\eqref{2pmomentconv_trun} for the case $\ell=2p$. 
    We decompose~\eqref{eq:B_nlm} into $A_{2p}(n)+B_{2p}(n)$, where the main part $B_{2p}(n)$ is defined by
    \begin{equation}\label{eq:B_2p}
        B_{2p}(n)= \sum_{\substack{\vec{k}\in\kK_{\ell}\\ \vec{j}\in\jJ_{\ell}}}   \frac{b_{j_1} \cdots b_{j_{2p}}}{k_1 ! \cdots k_{2p} !}  \sum_{\substack{\sigma \in \Pi_{=2}(\vec{k})\\\sigma \text{ regular}} } S_{\sigma}(n,\vec{j}).
    \end{equation}
    Recall $\widetilde{\sigma}$ in the definition of regular partition. For regular $\sigma\in \Pi_{=2}(\vec{k})$, we have
    \begin{equation}\label{eq:S_sigma_regular}
        S_{\sigma}(n,\vec{j})=n^{-p} \sum_{\vec{u} \in \uU_{\vec{j}}} \prod_{\widetilde{\fB} \in \widetilde{\sigma}} \delta_{ k_{\widetilde{\fB}}, \vec{\theta}, \vec{u}} (\widetilde{\fB}),
    \end{equation}
    where for $\widetilde{\fB}=\{q_1,q_2\}$, $k_{\widetilde{\fB}}=k_{q_1}=k_{q_2}$ and   
    \begin{equation}
        \delta_{k , \vec{\theta} , \vec{u} } (\widetilde{\fB}) = \int_{ \RR^{k}} \E D^{k}_{\bullet} e^{i \theta_{q_1} X_{u_{q_1}}} \E D^{k}_{\bullet} e^{i \theta_{q_2} X_{u_{q_2}}}.
    \end{equation}
    Fixing $\vec{k}$, for regular $\sigma\in \Pi_{=2}(\vec{k})$, the dependence of $S_{\sigma}(n,\vec{j})$ is through $\widetilde{\sigma}$, that is, $S_{\sigma}(n,\vec{j})=F(\widetilde{\sigma})$ for some functional $F$. Since the mapping $\sigma\mapsto\widetilde{\sigma}$ is $(\prod_{\widetilde{\fB} \in \widetilde{\sigma}} k_{\widetilde{\fB}}!)$-to-$1$, we have
    \begin{equation}\label{eq:sum_sigma_S}
        \sum_{\substack{\sigma \in \Pi_{=2}(\vec{k})\\\sigma \text{ regular}} }  S_{\sigma}= \sum_{\substack{\sigma \in \Pi_{=2}(\vec{k})\\\sigma \text{ regular}} } F(\widetilde{\sigma}) = \sum_{\widetilde{\sigma} \in \Pi_{=2}(\vec{1}_{2p})} F(\widetilde{\sigma}) \prod_{\widetilde{\fB} \in \widetilde{\sigma}} k_{\widetilde{\fB}}! .
    \end{equation}
    
    Substituting~\eqref{eq:sum_sigma_S} and~\eqref{eq:S_sigma_regular} into~\eqref{eq:B_2p}, we obtain that $B_{2p}(n)$ can be expressed as
    \begin{equation}\label{eq:B_2p_n}
        \begin{aligned}
            &n^{-p} \sum_{\substack{\vec{k}\in\kK_{\ell}\\ \vec{j}\in\jJ_{\ell}}}   \frac{b_{j_1} \cdots b_{j_{2p}}}{k_1 ! \cdots k_{2p} !}  \sum_{\widetilde{\sigma} \in \Pi_{=2}(\vec{1}_{2p})} \prod_{\widetilde{\fB} \in \widetilde{\sigma}} k_{\widetilde{\fB}}! \sum_{\vec{u}\in\uU_{\vec{j}}} \prod_{\widetilde{\fB} \in \widetilde{\sigma}} \delta_{ \vec{k} , \vec{\theta} , \vec{u} } (\widetilde{\fB})\\
            =& 
            \sum_{\widetilde{\sigma} \in\Pi_{=2}(\vec{1}_{2p})} \prod_{\widetilde{\fB}=\{q_1,q_2\}\in\widetilde{\sigma} }  \bigg( \sum_{k=d}^{m} \sum_{j_{q_1}, j_{q_2}=1}^{N} \frac{b_{j_{q_1}} b_{j_{q_2}}}{k!}  \frac{1}{n} \sum_{u_{q_1}=0}^{[nt_{j_{q_1}}]-1} \sum_{u_{q_2}=0}^{[nt_{j_{q_2}}]-1}\delta_{ k, \vec{\theta}, \vec{u}} (\widetilde{\fB})\bigg).
        \end{aligned}
    \end{equation}
    Recall the definition of ${\rho}_{k}$ in~\eqref{eq:rhok} and we have
    \begin{equation*}
        \lim_{n\rightarrow\infty}\frac{1}{n} \sum_{u_{q_1}=0}^{[nt_{j_{q_1}}]-1} \sum_{u_{q_2}=0}^{[nt_{j_{q_2}}]-1} \delta_{ k, \vec{\theta}, \vec{u}} (\widetilde{\fB}) =(t_{j_{q_1}}\wedge t_{j_{q_2}}){\rho}_{k}(\vec{\theta}_\fB).
    \end{equation*}
    Substituting it into~\eqref{eq:B_2p_n}, we obtain
    \begin{equation*}
        \lim_{n\rightarrow\infty}B_{2p}(n) = \sum_{\sigma \in\Pi_{=2}(\vec{1}_{2p})} \prod_{\widetilde{\fB}\in\widetilde{\sigma} } \bigg( \sum_{k =d}^{m} \frac{{\rho}_{k}(\vec{\theta}_\fB)}{k!}  \sum_{j_{1}, j_{2}=1}^{N} b_{j_{1}} b_{j_{2}}(t_{j_{1}}\wedge t_{j_{2}}) \bigg)= \widetilde{\bB}_{2p,m},
    \end{equation*}
    where we use $\var\big( \sum_{j=1}^N b_j B_{t_j}\big)=\sum_{j_{1}, j_{2}=1}^{N} b_{j_{1}} b_{j_{2}}(t_{j_{1}}\wedge t_{j_{2}})$.
    
    For the error term $A_{2p}(n)$, we have
    \begin{equation*}
        |A_{2p}(n)| \lesssim \sum_{\substack{\vec{k}\in\kK_{\ell}\\ \vec{j}\in\jJ_{\ell}}}   \frac{|b_{j_1} \cdots b_{j_{2p}}|}{k_1 ! \cdots k_{2p} !}  T_\sigma(n,\vec{j}).
    \end{equation*}
    Lemma \ref{lem:error_control} implies that
    \begin{equation*}
        \sum_{\substack{\sigma \in \Pi_{\ge2}(\vec{k})\\ \sigma \text{ irregular}}} T_\sigma(n,\vec{j}) \lesssim \sum_{k'_1=k_1}^{2k_1} \dots \sum_{k'_{2p-1}=k_{2p-1}}^{2k_{2p-1}} \sum_{\substack{\sigma' \in \Pi_{=2}(\vec{k'})\\ \sigma'\text{ irregular}}}  T_{\sigma'}(n,\vec{j}),
    \end{equation*}
    where $\vec{k'}=(k'_1,\dots,k'_{2p-1})$. Therefore, it follows from Proposition~\ref{irregularsigma} that $\lim_{n \rightarrow \infty}A_{2p}(n)=0$, which completes the proof for the case $\ell=2p$.  
   
    Now we turn to the case $\ell=2p-1$. Similar to $A_{2p}(n)$, we have
    \begin{equation*}
        |\bB_{n,2p-1,m}| \lesssim \sum_{\substack{\vec{k}\in\kK_{\ell}\\ \vec{j}\in\jJ_{\ell}}}   \frac{|b_{j_1} \cdots b_{j_{2p}}|}{k_1 ! \cdots k_{2p} !} \sum_{\sigma \in \Pi_{\geq 2}(\vec{k})} T_{\sigma}(n,\vec{j}).
    \end{equation*}
    Since every $\sigma\in\Pi_{\ge2}(\vec{k})$ is irregular, the desired result follows from the similar arguments in the bound of $A_{2p}$.
\end{proof}
    
    \bibliography{Refs}
    \bibliographystyle{Martin}
\end{document}